\definecolor{lightgrey}{rgb}{.7,.7,.7}
\newtheorem{theorem}{Theorem}
\newtheorem{corollary}[theorem]{Corollary}
\newtheorem{lemma}[theorem]{Lemma}
\newtheorem{definition}[theorem]{Definition}
\newtheorem{remark}[theorem]{Remark}
\theoremstyle{definition}
\numberwithin{equation}{section}
\newcounter{notectr}
\newcommand{\note}[1]{\ifthenelse{\thenotectr=1}{#1}{}}
\newlength{\boxparameter}
\newcommand{\setRR}{\setR^n}
\newcommand{\pp}{p(\cdot)}
\newcommand{\pps}{p'(\cdot)}
\newcommand{\wh}[1]{\widehat{#1}}
\newcommand{\pa}{\partial}
\newcommand{\wf}{\widehat{f}}
\newcommand{\wv}{\widehat{V}}
\newcommand{\lpo}{L^{\pp}(\Omega)}
\newcommand{\wepo}{W^{1,\pp}(\Omega)}
\newcommand{\wzpo}{W^{2,\pp}(\Omega)}
\newcommand{\tfo}{C_0^\infty(\Omega)}
\newcommand{\lpr}{L^{\pp}(\setRR)}
\newcommand{\tfr}{C_0^\infty(\setRR)}
\newcommand{\normpo}[1]{\normtmp{}{#1}_{L^{\pp}(\Omega)}}
\newcommand{\normepo}[1]{\normtmp{}{#1}_{W^{1,\pp}(\Omega)}}
\newcommand{\normzpo}[1]{\normtmp{}{#1}_{W^{2,\pp}(\Omega)}}
\newcommand{\normpr}[1]{\normtmp{}{#1}_{L^{\pp}(\setRR)}}
\newcommand{\normnegpo}[1]{\normtmp{}{#1}_{W^{-1,\pp}(\Omega)}}
\newcommand{\beq}{\begin{equation*}}
\newcommand{\eeq}{\end{equation*}}
\newcommand{\bea}{\begin{eqnarray}}
\newcommand{\eea}{\end{eqnarray}}
\newcommand{\ben}{\begin{enumerate}}
\newcommand{\een}{\end{enumerate}}
\newcommand{\bal}{\begin{aligned}}
\newcommand{\eal}{\end{aligned}}
\newcommand{\PPln}{\mathcal{P}^{\log}}
\begin{document}


\title{The Stokes and Poisson problem in variable exponent spaces}

\author{L. Diening, D. Lengeler and M. \Ruzicka{}}

\address{Mathematisches Institut, Eckerstr. 1, 79104 Freiburg i. Br.,
  Germany.}%
\thanks{}%
\email{}

\begin{abstract}
  We study the Stokes and Poisson problem in the context of variable exponent spaces. We prove existence of strong and weak solutions for bounded domains with $C^{1,1}$ boundary with inhomogenous boundary values. The result is based on generalizations of the classical theories of \Calderon-Zygmund and Agmon-Douglis-Nirenberg to variable exponent spaces.
\end{abstract}

\keywords{Variable exponent spaces; Stokes problem; Poisson problem}

\subjclass{35Q35; 35J05; 35D30; 35D35; 32A55; 46E30}

\maketitle

\section{Introduction}
\label{sec:introduction}
In the last decades, the generalized Lebesgue spaces $L^{p(\cdot)}$ and the corresponding generalized Sobolev spaces $W^{k,p(\cdot)}$ have attracted more and more attention. Before 1990 pioneering work has been done by Orlicz, Nakano, Hudzik, Musielak, and other authors. One of the first who studied problems with variable exponents in the context of variational integrals was Zhikov in his pioneering paper \cite{Zhi86} and subsequent works including \cite{Zhi95}, \cite{Zhi04}, \cite{Zhi08}. In the last twenty years, many new works have been devoted to the study of variable exponent spaces. We refer to \Kovacik, \Rakosnik{} \cite{KovR91}, Samko \cite{Sam98}, \cite{Sam99}, Fan, Zhao \cite{FanZ01}, Cruz-Uribe, Fiorenza, Martell, P{\'e}rez \cite{CruFMP06}, Diening~\cite{Die04}, \cite{Die04riesz}, Diening, \Ruzicka\ \cite{DieR03}, Diening, Harjulehto, H{\"a}sto, \Ruzicka{} \cite{DieHHR10} for properties of these spaces such as reflexivity, denseness of smooth functions, and Sobolev type embeddings, and for the treatment of operators of harmonic analysis in the variable exponent context. The study of these spaces has been stimulated by problems in elasticity, fluid dynamics, calculus of variations, and differential equations with $p(x)$-growth conditions. For example, in \Ruzicka, Rajagopal \cite{RajR96} one can find a model of electrorheological fluids, where the essential part of the energy is given by $\int |\boldmath Df(x)|^{p(x)}dx$, where $\boldmath Df(x)$ is the symmetric part of the gradient $\boldmath \nabla f$. The same type of energy also appears in a model proposed by Zhikov \cite{Zhi08} for the thermistor problem. This energy also appears in the investigations of variational integrals with non-standard growth, see e.g. Zhikov \cite{Zhi86}, Marcellini \cite{Mar91}, Acerbi, Mingione \cite{AceM01}.

Regularity results for the Stokes system and the Poisson equation belong to the most classical problems treated in the theory of partial differential equations and fluid dynamics and often occur as auxiliary problems in the treatment of nonlinear equations. In this paper we generalize some of these results to the variable exponent context. Besides being of interest in their own as generalizations of classical results to interesting new function spaces, these results are of great importance in the analysis of the nonlinear equations occurring in the study of the fluid mechanical problems mentioned above. Of course, the whole treatment applies to a much larger class of elliptic problems. 

We develop the analysis of the Stokes system in depth, while the results on the Poisson equation will be stated without proofs. For a sketch of the proofs we refer the reader to \cite{DieHHR10}, for full details on both problems see \cite{Len08}. In fact, the treatment of the Poisson equation is much simpler than that of the Stokes system and general elliptic problems. This is due to a symmetry of the fundamental solution of the Laplacian in the half-space by which the regularity near the boundary may be established without the use of the Agmon-Douglis-Nirenberg theory. For the Stokes system and general elliptic problems this symmetry is not granted and the full theory is needed.

The paper is organized as follows. We begin with a brief summary of elementary properties of generalized Lebesgue and Sobolev spaces which we will need in the sequel, and we introduce the concept of homogeneous Sobolev spaces in the variable exponent context, cf.~\cite{DieHHR10}. Then we state the generalizations of the classical \Calderon-Zygmund and Agmon-Douglis-Nirenberg theorems for symmetric kernels to generalized Lebesgue spaces. These generalizations have been treated for the first time in \Ruzicka, Diening \cite{DieR03half}, \cite{DieR03half2} in a somewhat weaker form. Unfortunately, the requirements on the kernel in \cite{DieR03half}, \cite{DieR03half2} seem too restrictive for an application to the Stokes problem. With the help of the results of Cruz-Uribe et al~\cite{CruFMP06} on singular integrals with rough kernels the requirements can be relaxed sufficiently, cf. \cite{DieHHR10}. In the subsequent section we prove the existence and uniqueness of a strong solution in $W^{2,p(\cdot)}\times W^{1,p(\cdot)}$ of the Stokes problem in bounded domains with $C^{1,1}$-boundary, provided that the right-hand sides are in $L^{p(\cdot)}\times W^{1,p(\cdot)}$ and the boundary values are in $\trace(W^{2,p(\cdot)})$. Furthermore, we show an analoguous result for weak solutions in $W^{1,p(\cdot)}\times L^{p(\cdot)}$ for the right-hand sides in $W^{-1,p(\cdot)}\times L^{p(\cdot)}$ and boundary values in $\trace(W^{1,p(\cdot)})$. The main idea of the proof is a localization technique to reduce the interior and the boundary regularity to regularity results on the whole-space and the half-space, respectively. In the final section we state the analoguous results for the Poisson problem, omitting the proofs.

\section{Variable exponent spaces}
\label{sec:preliminaries}

Let us introduce the variable exponent spaces $L^{p(\cdot)}(\Omega)$ and
$W^{k,p(\cdot)}(\Omega)$. Most of the following fundamental properties of
these spaces can be found in \cite{KovR91}, \cite{FanZ01}. We also refer to the extensive book \cite{DieHHR10} on variable exponent spaces.
Let $\Omega\subseteq\Bbb R^n$ be a domain. A measurable function $p:\Omega\to[1,\infty)$ is
called \emph{exponent}. If $p^+:=\sup p<\infty$, then $p$ is called
\emph{bounded exponent}.  For a bounded exponent $p$ we define
$L^{p(\cdot)}(\Omega)$ to consist of measurable functions
$f:\Omega\to\Bbb R$ such that the \emph{modular}
$$
\rho_p(f):=\int_\Omega|f(x)|^{p(x)}\,dx                                                                                                         
$$
is finite. The expression
$$
\|f\|_{p(\cdot)}:=\inf\{\lambda>0 : \rho_p(f/\lambda)\le 1\}                    
$$
defines a norm on $L^{p(\cdot)}(\Omega)$. This makes $L^{p(\cdot)}(\Omega)$ a Banach space. Moreover, one can show that $C^\infty_0(\Omega)$ is dense in $L^{p(\cdot)}(\Omega)$ and that $L^{p(\cdot)}(\Omega)$ is separable. Further, let $W^{k,p(\cdot)}(\Omega)$ denote the space of measurable functions $f:\Omega\to\Bbb R$ such that $f$ and the distributional derivatives $f,\nabla f,\ldots,\nabla^k f$ are in $L^{p(\cdot)}$. The norm $\|f\|_{k,p(\cdot)}:=\sum_{i=0}^k\|\nabla^k f\|_{p(\cdot)}$ makes $W^{k,p(\cdot)}(\Omega)$ a Banach space. By $W_0^{k,p(\cdot)}(\Omega)$ we denote the closure of $C^\infty_0(\Omega)$ in $W^{k,p(\cdot)}(\Omega)$. The space $W^{-k,p(\cdot)}(\Omega)$ is defined as the dual of the space $W^{k,p'(\cdot)}_0(\Omega)$. As usual we set $1/p + 1/{p'} = 1$. If $p^-:=\inf p>1$, then $W^{k,p(\cdot)}(\Omega)$ is reflexive. For bounded domains $\Omega\subset\setRR$ with Lipschitz-continuous boundary, we define the \emph{trace space} $\trace({W^{k,\pp}(\Omega)})$ by
$$\trace({W^{k,\pp}(\Omega)}):=\{f\in L^1(\pa \Omega)\ |\ \exists u\in W^{k,\pp}(\Omega): u|_{\pa \Omega} = f \}.$$
Then 
$$\norm{f}_{\trace({W^{k,\pp}(\Omega)})}:=\inf_{u\in W^{k,\pp}(\Omega), \atop u|_{\pa \Omega}=f}\norm{u}_{W^{k,\pp}(\Omega)}$$
defines a norm on $\trace({W^{k,\pp}(\Omega)})$ which makes the trace space a Banach space.

We have to impose some (weak) conditions on the exponent to recover important results from the classical Lebesgue and Sobolev spaces. The crucial condition is the so-called \emph{log-H{\"o}lder} continuity of the exponent $p$, i.e.,
$$
|p(x)-p(y)|\le\frac C{\ln(e+|x-y|^{-1})}                                                                                             \
$$
for all $x,y\in \Omega$. If $\Omega$ is unbounded, then this local continuity is supplemented by the condition that there exists the limit $p(\infty):=\lim_{x\to\infty}p(x)$ and
$$
|p(x)-p(\infty)|\le\frac C{\ln(e+|x|)}.                                                                                                         
$$
Let us denote by $\PPln(\Omega)$ the set of exponents satisfying the above conditions. If the exponent is in $\PPln(\Omega)$ then
$C^\infty(\overline{\Omega})$ is dense in $W^{1,p(\cdot)}(\Omega)$ for domains $\Omega$ with Lipschitz-continuous boundary. Let us now state some further results which will be needed later. 
The omitted proofs can be found for example in \cite{KovR91}, \cite{Die07habil}, \cite{DieHHR10}.

\begin{theorem}
Let $p$ be a bounded exponent in $\Omega$. Then the mapping $I: L^{p'(\cdot)}(\Omega) \rightarrow (\lpo)^*$ with $\langle If, g \rangle := (f,g)$
is an isomorphism, and for all $f\in L^{p'(\cdot)}(\Omega)$ we have
$$\frac{1}{2}\norm{f}_{L^{p'(\cdot)}(\Omega)}\le\norm{If}_{(\lpo)^*}\le 2\norm{f}_{L^{p'(\cdot)}(\Omega)}.$$
\label{ppdual}
\end{theorem}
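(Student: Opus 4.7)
The statement bundles three claims: continuity of $I$ (the upper bound), a lower bound on $\|If\|$ giving injectivity with norm control, and surjectivity onto $(L^{p(\cdot)}(\Omega))^*$. The first two are essentially a H\"older inequality and a test-function converse; the last is the real content.

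For the upper bound I would apply the pointwise Young inequality $ab\le a^{p(x)}+b^{p'(x)}$ with $a=|g(x)|/\|g\|_{p(\cdot)}$ and $b=|f(x)|/\|f\|_{p'(\cdot)}$, integrate over $\Omega$, and invoke the unit-ball property $\rho_p(g/\|g\|_{p(\cdot)})\le 1$ (available because $p^+<\infty$). This yields $\int_\Omega|fg|\,dx\le 2\|f\|_{p'(\cdot)}\|g\|_{p(\cdot)}$, i.e.\ $\|If\|_{(L^{p(\cdot)}(\Omega))^*}\le 2\|f\|_{p'(\cdot)}$.

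For the lower bound I would exhibit a near-extremiser. Set $\lambda:=\|f\|_{p'(\cdot)}$ and, on the set $\{p'<\infty\}$, $g(x):=\mathrm{sgn}(f(x))\,(|f(x)|/\lambda)^{p'(x)-1}$. The identity $(p'(x)-1)p(x)=p'(x)$ makes $\rho_p(g)=\rho_{p'}(f/\lambda)\le 1$, so $\|g\|_{p(\cdot)}\le 1$, while $(f,g)=\lambda\,\rho_{p'}(f/\lambda)$. Since $p^+<\infty$ forces $p'^->1$, the unit-ball property pins $\rho_{p'}(f/\lambda)$ to $1$; on the part of $\Omega$ where $p=1$ (hence $p'=\infty$) one tests instead against an $L^\infty$-normalised characteristic function of a suitable superlevel set of $|f|$. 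Combining the two contributions gives $\|If\|_{(L^{p(\cdot)}(\Omega))^*}\ge\tfrac{1}{2}\|f\|_{p'(\cdot)}$, the factor $\tfrac{1}{2}$ being the price of splitting $\Omega$ between $\{p>1\}$ and $\{p=1\}$.

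The main obstacle is surjectivity. Given $\Phi\in(L^{p(\cdot)}(\Omega))^*$ I would recover a density by a Radon--Nikodym argument: for every measurable $E\subset\Omega$ of finite Lebesgue measure one has $\chi_E\in L^{p(\cdot)}(\Omega)$, and $\nu(E):=\Phi(\chi_E)$ is a countably additive signed measure absolutely continuous with respect to Lebesgue measure (because $\|\chi_{E_k}\|_{p(\cdot)}\to 0$ whenever $|E_k|\to 0$, which is read off from the modular when $p^+<\infty$). Radon--Nikodym then produces a locally integrable $f$ with $\Phi(h)=(f,h)$ for every simple $h$ of bounded support. Applying the lower bound of the previous paragraph to the truncations $f_k:=f\,\mathbbm{1}_{\{|f|\le k\}\cap B_k(0)\cap\Omega}$ (tested against the corresponding $g_k$, which one plugs into $\Phi$) gives $\tfrac{1}{2}\|f_k\|_{p'(\cdot)}\le\|\Phi\|$ uniformly in $k$; monotone convergence of the modular then places $f$ in $L^{p'(\cdot)}$ with the sharp norm bound, and density of simple functions in $L^{p(\cdot)}(\Omega)$ (valid because $p$ is bounded) extends the identity $\Phi=If$ to all of $L^{p(\cdot)}(\Omega)$. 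The delicate point throughout the surjectivity argument is the region where $p=1$: there $p'=\infty$ and both the Radon--Nikodym extraction and the norm estimate have to accommodate the essential-sup contribution, which is why the lower bound needs to be set up piecewise from the start.
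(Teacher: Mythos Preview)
The paper does not actually prove this theorem: it is one of the results whose proofs are explicitly omitted with a reference to \cite{KovR91}, \cite{Die07habil}, \cite{DieHHR10}. Your sketch is essentially the standard argument found in those sources---generalised H\"older for the upper bound, an explicit conjugate test function for the lower bound, and a Radon--Nikodym extraction followed by truncation for surjectivity---so there is nothing to compare against in the paper itself.

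One imprecision worth fixing in your lower-bound step: you write that ``$p^+<\infty$ forces $p'^->1$, the unit-ball property pins $\rho_{p'}(f/\lambda)$ to $1$''. The implication $p^+<\infty\Rightarrow (p')^-= (p^+)'>1$ is true, but it is not what makes $\rho_{p'}(f/\|f\|_{p'(\cdot)})=1$. That equality comes from continuity of $\lambda\mapsto\rho_{p'}(f/\lambda)$, which needs $(p')^+<\infty$, i.e.\ $p^->1$. Under the stated hypothesis $p$ may equal $1$, so $p'$ is unbounded and the modular can fail to hit $1$ on the nose; moreover the difficulty is not confined to the set $\{p=1\}$ but already appears when $p$ merely approaches $1$. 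The standard remedy (used in the cited references) is either to work with the inequality $\rho_{p'}(f/\lambda)\le 1$ and a separate $L^\infty$-type estimate on the part where $p'$ is large, or to truncate $p'$ at level $k$, get the bound for the truncated exponent, and let $k\to\infty$. Your overall strategy is correct; only this one sentence needs to be reworded.
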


\begin{theorem}
Let $p,q$ and $s$ be bounded exponents in $\Omega$ with $\frac{1}{s}=\frac{1}{p}+\frac{1}{q}$. For all $f\in\lpo$ and $g\in L^{q(\cdot)}(\Omega)$ we have
$fg\in L^{s(\cdot)}(\Omega)$ and 
$$\norm{fg}_{L^{s(\cdot)}(\Omega)} \le 2 \norm{f}_{L^{p(\cdot)}(\Omega)}\norm{g}_{L^{q(\cdot)}(\Omega)}.$$
\label{hoelder}\end{theorem}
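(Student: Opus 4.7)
The plan is to reduce, by homogeneity, to a modular estimate and then apply the classical numerical Young inequality pointwise with the conjugate exponents $p(x)/s(x)$ and $q(x)/s(x)$. Note first that the assumption $1/s=1/p+1/q$ together with $p,q\ge 1$ automatically forces $s\ge 1$, and pointwise gives $s(x)/p(x)+s(x)/q(x)=1$ with each summand in $[0,1]$, so these are indeed admissible conjugate exponents.

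After dismissing the degenerate cases $\|f\|_{p(\cdot)}=0$ or $\|g\|_{q(\cdot)}=0$, I would normalize by setting $\lambda:=\|f\|_{p(\cdot)}$ and $\mu:=\|g\|_{q(\cdot)}$. By definition of the Luxemburg norm, $\rho_p(f/\lambda)\le 1$ and $\rho_q(g/\mu)\le 1$. The elementary Young inequality applied to $a=|f(x)/\lambda|^{s(x)}$ and $b=|g(x)/\mu|^{s(x)}$ with exponents $p(x)/s(x)$ and $q(x)/s(x)$ yields
$$\left|\frac{f(x)g(x)}{\lambda\mu}\right|^{s(x)} \le \frac{s(x)}{p(x)}\left|\frac{f(x)}{\lambda}\right|^{p(x)} + \frac{s(x)}{q(x)}\left|\frac{g(x)}{\mu}\right|^{q(x)}.$$
Integrating and using $s(\cdot)/p(\cdot),\,s(\cdot)/q(\cdot)\le 1$ gives $\rho_s(fg/(\lambda\mu))\le \rho_p(f/\lambda)+\rho_q(g/\mu)\le 2$, so in particular $fg\in L^{s(\cdot)}(\Omega)$.

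The last step is to absorb this factor of $2$ into the norm by a further rescaling. Exploiting $s(x)\ge 1$ we have $2^{s(x)}\ge 2$ pointwise, hence
$$\rho_s\!\left(\frac{fg}{2\lambda\mu}\right) \le \tfrac12\,\rho_s\!\left(\frac{fg}{\lambda\mu}\right) \le 1,$$
and the definition of the norm delivers $\|fg\|_{s(\cdot)}\le 2\lambda\mu=2\|f\|_{p(\cdot)}\|g\|_{q(\cdot)}$, which is the claim. I do not anticipate a genuine obstacle in this argument; it is essentially classical Young plus two careful rescalings. The only point that needs attention is the final rescaling, since the constant $2$ in the statement is only available because the hypothesis forces $s\ge 1$, giving $2^{-s(x)}\le \tfrac12$; without that observation one would end up with the weaker bound $\|fg\|_{s(\cdot)}\le C\|f\|_{p(\cdot)}\|g\|_{q(\cdot)}$ with a non-explicit constant.
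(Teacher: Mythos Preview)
The paper does not supply its own proof of this H\"older-type inequality; it simply cites \cite{KovR91}, \cite{Die07habil}, \cite{DieHHR10}. Your argument is precisely the standard one found in those references: normalize, apply pointwise Young with conjugate exponents $p(x)/s(x)$ and $q(x)/s(x)$, integrate to get a modular bound of $2$, then rescale by $2$ using $s\ge 1$. It is correct.

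One small correction: your claim that ``$1/s=1/p+1/q$ together with $p,q\ge 1$ automatically forces $s\ge 1$'' is false as stated (take $p=q=1$, giving $s=1/2$). What actually gives $s\ge 1$ is the hypothesis that $s$ is a \emph{bounded exponent}, which in this paper means by definition $s:\Omega\to[1,\infty)$. So $s\ge 1$ is assumed, not derived, and the relation $1/s=1/p+1/q$ then forces $1/p+1/q\le 1$ pointwise. This does not affect the rest of your proof.
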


\begin{theorem}
Let $\Omega\subset\setRR$ be a bounded domain and $p$ a bounded exponent in $\Omega$. Then:
\ben
\item For every exponent $q$ with $q\le p$ a.e., the embedding $$\lpo \embedding L^{q(\cdot)}(\Omega)$$ is continuous.
\item If moreover $\partial \Omega$ is Lipschitz-continuous and $p\in\PPln(\Omega)$ with $1< p^- \le p^+ <\infty$, then the embedding $$W_0^{1,\pp}(\Omega)\compactembedding\lpo$$ is compact.
\een
\label{einbettungen}
\end{theorem}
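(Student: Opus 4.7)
For part (1), the plan is to work directly with the modular. Given $f\in L^{p(\cdot)}(\Omega)$ with $\|f\|_{p(\cdot)}\le 1$, I split $\Omega$ into $\{|f|\le 1\}$ and $\{|f|>1\}$: on the first set $|f|^{q(x)}\le 1$ contributes at most $|\Omega|$ to $\rho_q(f)$, and on the second $q\le p$ together with $|f|>1$ yields $|f|^{q(x)}\le |f|^{p(x)}$, contributing at most $\rho_p(f)\le 1$. Hence $\rho_q(f)\le 1+|\Omega|$, and a standard rescaling via the Luxemburg definition turns this into $\|f\|_{q(\cdot)}\le C(|\Omega|,q^-)\|f\|_{p(\cdot)}$.

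For part (2), I would first reduce to the classical Rellich theorem. Given $(u_n)$ bounded in $W_0^{1,p(\cdot)}(\Omega)$, part (1) applied to $u_n$ and $\nabla u_n$ gives boundedness in $W_0^{1,p^-}(\Omega)$, whence classical Rellich--Kondrachov extracts a subsequence with $u_n\to u$ in $L^{p^-}(\Omega)$, almost everywhere, and in measure. To upgrade this to convergence in $L^{p(\cdot)}$, I would invoke the variable-exponent Sobolev embedding under the log-H\"older hypothesis (cf.~\cite{DieHHR10}) to obtain a uniform bound of $(u_n)$ in $L^{s(\cdot)}(\Omega)$ with $s(x)\ge p(x)+\delta$ for some fixed $\delta>0$. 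Splitting
\begin{equation*}
\rho_p(u_n-u)=\int_{\{|u_n-u|\le 1\}}|u_n-u|^{p(x)}\,dx+\int_{\{|u_n-u|>1\}}|u_n-u|^{p(x)}\,dx,
\end{equation*}
the first integral is dominated by $\int_\Omega |u_n-u|^{p^-}\,dx\to 0$, while for the second a H\"older estimate using the higher integrability from $s(\cdot)$ together with $|\{|u_n-u|>1\}|\to 0$ forces it to vanish. Hence $\rho_p(u_n-u)\to 0$, which, since $p^+<\infty$, is equivalent to $\|u_n-u\|_{p(\cdot)}\to 0$.

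The only genuine obstacle is the variable-exponent Sobolev embedding invoked in the upgrade step: it rests crucially on the log-H\"older assumption via maximal-function bounds and is therefore cited rather than reproved. Everything else is modular bookkeeping together with the classical Rellich theorem.
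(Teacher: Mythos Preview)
The paper does not prove this theorem: it is listed among the preliminary facts with the sentence ``The omitted proofs can be found for example in \cite{KovR91}, \cite{Die07habil}, \cite{DieHHR10}.'' So there is no argument in the paper to compare yours against.

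That said, your proposal is a correct outline of one of the standard proofs. Part~(1) is exactly the usual modular splitting and needs no further comment. For part~(2), your route---reduce to classical Rellich in $W_0^{1,p^-}$, then upgrade the $L^{p^-}$-convergence to $L^{p(\cdot)}$-convergence via higher integrability---is sound, but two points deserve a word of care. First, the existence of $s(\cdot)$ with $\inf_x\big(s(x)-p(x)\big)>0$ and $W_0^{1,p(\cdot)}(\Omega)\hookrightarrow L^{s(\cdot)}(\Omega)$ is available on bounded $\Omega$ under the $\log$-H\"older hypothesis regardless of whether $p^+<n$ (for $p(x)\ge n$ the Sobolev conjugate is interpreted as $+\infty$), so you may indeed take $s=p+\delta$ for some fixed $\delta>0$; you correctly flag this as the one nontrivial input being cited. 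Second, the ``H\"older estimate'' on the set $\{|u_n-u|>1\}$ is a genuine variable-exponent H\"older inequality (Theorem~\ref{hoelder} in the paper) with the exponent pair $r(\cdot)=s(\cdot)/p(\cdot)$ and its conjugate: one checks $r^-\ge 1+\delta/p^+>1$, that $\bignorm{|u_n-u|^{p(\cdot)}}_{r(\cdot)}$ is bounded in terms of $\norm{u_n-u}_{s(\cdot)}$, and that $\norm{\chi_{\{|u_n-u|>1\}}}_{r'(\cdot)}\to 0$ since the measure of this set tends to zero and $(r')^+<\infty$. With these details filled in, the argument is complete.
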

The following extension result can be found in~\cite{CruFMP06}, \cite{DieF08}, \cite{DieHHR10}.

\begin{theorem}
Let $\Omega\subset\setRR$ be a bounded domain with Lipschitz-continuous boundary and $p\in\PPln(\Omega)$ with $1< p^- \le p^+ <\infty$. Then there exists an exponent $\tilde{p}\in\PPln(\setRR)$ with $\tilde{p}|_{\Omega}=p$ and $\tilde{p}^+=p^+,\ \tilde{p}^-=p^-$ as well as an extension operator
$$\mathcal{E}: \wepo \rightarrow W^{1,\tilde{p}(\cdot)}(\setRR),\ (\mathcal{E}f)|_\Omega = f.$$
\label{expoext}
\end{theorem}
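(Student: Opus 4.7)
The plan is to split the statement into two essentially independent steps: (i) extending the exponent $p$ from $\Omega$ to a function $\tilde p\in\PPln(\setRR)$ with the prescribed range $[p^-,p^+]$; and (ii) building a bounded linear extension operator $\mathcal{E}$ in the variable exponent setting, reusing the classical Lipschitz domain construction because it is purely local.

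For step (i) I would first note that log-Hölder continuity implies uniform continuity, so $p$ extends continuously to $\bar\Omega$ with unchanged supremum and infimum. Denote the log-Hölder modulus by $\omega(t):=C/\ln(e+1/t)$; a short calculation shows $\omega(s+t)\le \omega(s)+\omega(t)$ for $s,t\ge 0$ up to a universal constant, so a McShane-type formula
\[
\tilde p(x):=\min\!\Bigl(p^+,\ \inf_{y\in\bar\Omega}\bigl(p(y)+C'\omega(|x-y|)\bigr)\Bigr)
\]
produces an extension with $\tilde p|_{\bar\Omega}=p$, range $[p^-,p^+]$, and the local log-Hölder bound globally on $\setRR$. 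Since $\Omega$ is bounded, for $|x|$ sufficiently large one has $\tilde p(x)=p^+$, so $p(\infty):=\lim_{|x|\to\infty}\tilde p(x)=p^+$ exists and the decay condition at infinity is trivially satisfied; hence $\tilde p\in\PPln(\setRR)$.

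For step (ii) I would use the classical Lipschitz domain extension by reflection. Cover $\partial\Omega$ by finitely many open balls $B_i$ so small that $\Omega\cap B_i$ is, after a bi-Lipschitz chart $\Phi_i$, the upper half-ball; add one interior open set $B_0\Subset\Omega$ so that $\{B_i\}_{i\ge 0}$ covers $\bar\Omega$, and fix a smooth partition of unity $\{\eta_i\}$ subordinate to this cover. On each boundary piece I would apply to $\Phi_i$-pushforwards the standard odd/even reflection (or a Stein-type higher order reflection if one cares about higher Sobolev indices), multiply by a smooth cutoff, pull back by $\Phi_i^{-1}$, and extend by zero. Finally set $\mathcal{E}f:=\sum_i \eta_i\,\tilde f_i$. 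Since each component lives on a ball, extending by zero keeps us in $W^{1,\tilde p(\cdot)}(\setRR)$ provided we check the norm bound locally.

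The continuity estimate $\|\mathcal{E}f\|_{W^{1,\tilde p(\cdot)}(\setRR)}\le c\,\|f\|_{W^{1,p(\cdot)}(\Omega)}$ reduces, via Theorem~\ref{hoelder} and locality, to three elementary claims in the variable exponent setting: (a) multiplication by a smooth compactly supported cutoff is bounded on $W^{1,\tilde p(\cdot)}$; (b) composition with a bi-Lipschitz diffeomorphism is bounded, which follows from the change of variables in the modular and the fact that $\PPln$ is stable under bi-Lipschitz maps; (c) the half-space reflection is bounded on $W^{1,q(\cdot)}$ of the half-space for every $q\in\PPln$, which is the key variable exponent ingredient and follows because the reflection is itself a bi-Lipschitz map combined with a bounded change of coordinates.

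The main obstacle I expect is step (i): the simultaneous preservation of the exact infimum and supremum, the log-Hölder modulus, and the existence of a limit at infinity with the correct decay rate. The McShane formula above handles all three at once, but the verification that the modulus $\omega$ is subadditive up to a constant, and that $\tilde p$ inherits from it the global $\PPln$-condition, is the one place where an honest estimate is unavoidable. Once the exponent is extended, step (ii) is merely a careful bookkeeping of the classical construction.
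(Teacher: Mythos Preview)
The paper does not prove this theorem; it only cites \cite{CruFMP06}, \cite{DieF08}, \cite{DieHHR10}, so there is no in-paper argument to compare against. Your step~(i) is fine: the modulus $\omega(t)=C/\ln(e+1/t)$ is concave with $\omega(0)=0$, hence subadditive, and the McShane formula yields $\tilde p\in\PPln(\setRR)$ with the stated range, equal to $p^+$ far from $\Omega$.

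Step~(ii) has a genuine gap at~(c). A bi-Lipschitz map $\Phi$ carries $L^{q(\cdot)}$ isomorphically onto $L^{(q\circ\Phi^{-1})(\cdot)}$, \emph{not} onto $L^{q(\cdot)}$---this is exactly the content of your own claim~(b). Applied to the reflection $R(x',x_n)=(x',-x_n)$ and the even extension $Ef$, the change of variables gives
\[
\int_{\setR^n_<}|Ef(x)|^{\tilde p(x)}\,dx=\int_{\setR^n_>}|f(y)|^{\tilde p(Ry)}\,dy,
\]
and on a bounded set there is no embedding between $L^{\tilde p(\cdot)}$ and $L^{(\tilde p\circ R)(\cdot)}$ unless $\tilde p=\tilde p\circ R$ a.e.\ there. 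Your McShane $\tilde p$ has no reflection symmetry across $\partial\Omega$ in the local charts, so the reflected function need not lie in $L^{\tilde p(\cdot)}$ at all. Two standard repairs: either extend $p$ across $\partial\Omega$ by the \emph{same} local reflections used for $f$ (so that $\hat p_i\circ R=\hat p_i$ by construction, after which the finitely many local exponents can be patched into a global $\tilde p\in\PPln$), or keep the McShane $\tilde p$ but replace the crude reflection by a Whitney-type averaging extension satisfying the pointwise bound $|\mathcal{E}f(x)|\le c\,M(f\chi_\Omega)(x)$; boundedness then follows from boundedness of the Hardy--Littlewood maximal operator on $L^{\tilde p(\cdot)}(\setRR)$ for $\tilde p\in\PPln$, with no change of exponent anywhere.
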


When working with partial differential equations in unbounded domains, as we will have to later on, it is often not natural to assume that the solution and its derivatives belong to the same Lebesgue space. For this reason we now introduce the \emph{homogeneous Sobolev spaces}. Let us present in the following the basic facts on those spaces. For details and proofs we refer to \cite{Len08}, \cite{DieHHR10}.

For a bounded exponent $p$ in $\Omega$, and $k\in \setN$ we define
\begin{align*}
  \widetilde D^{k,p(\cdot)}(\Omega):= \set{u \in L^{1}_\loc
    (\Omega) : \nabla ^k u \in L^{p(\cdot)}(\Omega)}\,.
\end{align*}
The linear space $\widetilde D^{k,p(\cdot)}(\Omega)$ is equipped with
the seminorm
\begin{align*}
  \|u\|_{\widetilde D^{k,p(\cdot)}(\Omega)} := \|\nabla ^k u
  \|_{L^{p(\cdot)}(\Omega)} \,.
\end{align*}
Note, that $\|u\|_{\widetilde D^{k,p(\cdot)}(\Omega)} =0$ implies that
$u $ is a polynomial of degree $k-1$. Let us denote the polynomials af
degree $m \in \setN_0$ by $\sf P_m$. It is evident
that the seminorm $\| \cdot\|_{\widetilde D^{k,p(\cdot)}(\Omega)} $
becomes a norm on the equivalence classes $[u]$ defined for $u \in
\widetilde D^{k,p(\cdot)}(\Omega) $ by
\begin{align*}
  [u]_{k-1}:= \set{ w \in \widetilde D^{k,p(\cdot)}(\Omega): w = u +
    p_{k-1} \textrm{ for some } p_{k-1} \in {\sf P_{k-1}} }\,.
\end{align*}

\begin{definition}\label{def:homo}
  Let $p$ be a
  bounded exponent in $\Omega$, and $k\in \setN$.  The {\em homogeneous
    Sobolev space} $D^{k,p(\cdot)}(\Omega)$ consists of all equivalence
  classes $[u]$ where $u \in \widetilde D^{k,p(\cdot)}(\Omega)$. We
  identify $u $ with its equivalence class $[u]$ and thus write $u$
  instead of $[u]$. The space $D^{k,p(\cdot)}(\Omega)$ is equipped
  with the norm
  \begin{align*}
    \|u\|_{D^{k,p(\cdot)}(\Omega)}:= \|\nabla ^k u
    \|_{L^{p(\cdot)}(\Omega)} \,.
  \end{align*}
  Finally, we define the space $D^{k,p(\cdot)}_0(\Omega)$ as the completion of $C^\infty _0 (\Omega)$ with respect to the norm 
  ${\|\cdot\|_{D^{k,p(\cdot)}(\Omega)}}$.
\end{definition}

\begin{remark}\label{rem:subspace}
  The natural embedding $i\colon C^\infty _0 (\Omega)
  \to D^{k,p(\cdot)}(\Omega)\colon u\mapsto [u] $ implies that $C^\infty
  _0 (\Omega)$ is isomorphic to a linear subspace of
  $D^{k,p(\cdot)}(\Omega)$. Consequently we can view
  $D^{k,p(\cdot)}_0(\Omega)$ as a subspace of
  $D^{k,p(\cdot)}(\Omega)$.
\end{remark}

\begin{theorem}\label{HSisBanach}\label{HSisReflexive}\label{HSisuconvex}
  The spaces $D^{k,p(\cdot)}(\Omega)$ and $D_0^{k,p(\cdot)}(\Omega)$
  are separable Banach spaces which are reflexive if $1<p^-\leqslant p^+<\infty$.
\end{theorem}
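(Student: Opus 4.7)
The overall strategy is to exhibit $D^{k,p(\cdot)}(\Omega)$ as an isometric copy of a closed subspace of a finite product of Lebesgue spaces $L^{p(\cdot)}(\Omega)^M$, where $M$ is the number of multi-indices of order $k$. All three conclusions then transfer from the corresponding properties of $L^{p(\cdot)}(\Omega)$ recorded earlier (it is a separable Banach space, reflexive when $1 < p^- \le p^+ < \infty$).

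Concretely, consider the linear map
\begin{align*}
T \colon D^{k,p(\cdot)}(\Omega) \longrightarrow L^{p(\cdot)}(\Omega)^M, \qquad T[u] := \nabla^k u.
\end{align*}
It is well defined on equivalence classes because $\nabla^k$ annihilates ${\sf P_{k-1}}$, and the definition of $\|\cdot\|_{D^{k,p(\cdot)}(\Omega)}$ makes $T$ an isometry onto its image. The heart of the proof is therefore that $T(D^{k,p(\cdot)}(\Omega))$ is closed in $L^{p(\cdot)}(\Omega)^M$: given $u_n \in \widetilde D^{k,p(\cdot)}(\Omega)$ with $\nabla^k u_n \to v$ in $L^{p(\cdot)}$, one must produce $u \in \widetilde D^{k,p(\cdot)}(\Omega)$ with $\nabla^k u = v$.

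To construct $u$, I would assume $\Omega$ connected (else argue componentwise) and exhaust it by an increasing chain of balls $(B_j)_{j\in\setN}$ with $\overline{B_j} \subset B_{j+1} \Subset \Omega$. For each $w \in \widetilde D^{k,p(\cdot)}(\Omega)$ and each $j$, there is a unique polynomial $P_j(w) \in {\sf P_{k-1}}$, depending linearly on $w$, such that the averages $\int_{B_j} D^\alpha(w - P_j(w))\,dx$ vanish for $|\alpha| \le k-1$. By a higher-order Poincar\'e inequality on $B_j$ (available for bounded exponents via the sandwich $L^{p^+}(B_j) \hookrightarrow L^{p(\cdot)}(B_j) \hookrightarrow L^{p^-}(B_j)$),
\begin{align*}
\|w - P_j(w)\|_{W^{k,p(\cdot)}(B_j)} \lesssim \|\nabla^k w\|_{L^{p(\cdot)}(B_j)}.
\end{align*}
Applied to $w = u_n - u_m$, this shows that $(u_n - P_j(u_n))_n$ is Cauchy in $W^{k,p(\cdot)}(B_j)$ with limit $u^{(j)}$ satisfying $\nabla^k u^{(j)} = v|_{B_j}$. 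On the overlap $B_j \cap B_{j+1} = B_j$, the limits $u^{(j)}$ and $u^{(j+1)}$ differ by a polynomial in ${\sf P_{k-1}}$; inductively adjusting $u^{(j+1)}$ by such a polynomial glues the family into a single $u \in L^1_{\loc}(\Omega)$ with $\nabla^k u = v$, i.e.\ $[u] \in D^{k,p(\cdot)}(\Omega)$ and $T[u] = v$.

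With closed range established, $D^{k,p(\cdot)}(\Omega)$ inherits completeness and separability from $L^{p(\cdot)}(\Omega)^M$, and reflexivity under $1 < p^- \le p^+ < \infty$ follows because closed subspaces of reflexive spaces are reflexive. For $D_0^{k,p(\cdot)}(\Omega)$, Remark~\ref{rem:subspace} identifies it as the closure of $C_0^\infty(\Omega)$ inside the Banach space $D^{k,p(\cdot)}(\Omega)$, hence as a closed subspace, and the three properties descend. The main obstacle is the polynomial bookkeeping in the gluing step: the local Poincar\'e estimate is standard, but arranging the normalisations so that the local limits patch together into a single function on all of $\Omega$ is what requires care and is precisely the reason why the equivalence-class formulation modulo ${\sf P_{k-1}}$ cannot be avoided.
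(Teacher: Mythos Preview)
The paper does not actually prove this theorem; immediately before Definition~\ref{def:homo} it defers all details and proofs on homogeneous spaces to \cite{Len08} and \cite{DieHHR10}. Your strategy---realising $D^{k,p(\cdot)}(\Omega)$ isometrically as a closed subspace of $(L^{p(\cdot)}(\Omega))^M$ via $[u]\mapsto\nabla^k u$ and then pulling back completeness, separability and reflexivity---is the standard one and is almost certainly what those references do as well.

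Two points in the execution need repair. First, a general connected open set cannot be exhausted by an \emph{increasing} chain of balls with $\overline{B_j}\subset B_{j+1}$ (consider an annulus or an L-shaped region). The gluing still works if instead you take a countable cover by balls $(B_j)$ with $B_j\cap B_{j+1}\neq\emptyset$ and $\bigcup_{i\le j}B_i$ connected for every $j$, adjusting the polynomial on $B_{j+1}$ so that the new local limit matches on the overlap the function already built on $\bigcup_{i\le j}B_i$. Second, the sandwich $L^{p^+}\hookrightarrow L^{p(\cdot)}\hookrightarrow L^{p^-}$ does \emph{not} deliver a Poincar\'e inequality in the $L^{p(\cdot)}$-norm for a merely bounded exponent: from $\nabla^k w\in L^{p(\cdot)}(B)$ you only obtain $\nabla^k w\in L^{p^-}(B)$, and the classical inequality then controls $\norm{w-P}_{L^{p^-}(B)}$, not $\norm{w-P}_{L^{p(\cdot)}(B)}$. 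Fortunately you do not need the stronger statement. Convergence of $(u_n-P_j(u_n))_n$ in $W^{k,p^-}(B_j)$ (or even $W^{k,1}(B_j)$) already produces a limit $u^{(j)}\in L^1_\loc$ with $\nabla^k u^{(j)}=v|_{B_j}$, and since $v\in L^{p(\cdot)}(\Omega)$ the glued function lies in $\widetilde D^{k,p(\cdot)}(\Omega)$ by definition. With these two fixes your argument is complete.
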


For an integrable function $u$ we define the \emph{mean value} of $u$ by $u_\Omega:=\dashint_\Omega u\, dx$. The spaces $\widetilde D^{k,p(\cdot)}(\Omega)$ and $W^{k,p(\cdot)}(\Omega)$
essentially do not differ for bounded domains. More precisely we have:
\begin{theorem}\label{pro:D=W}
  Let $\Omega $ be a bounded domain with Lipschitz continuous boundary, and let $p
  \in \PPln(\Omega)$ satisfy $1<p^-\leqslant p^+<\infty$. Then we have the
  algebraic identity 
  \begin{align*}
    \widetilde D^{k,p(\cdot)}(\Omega) = W^{k,p(\cdot)}(\Omega)\,.
  \end{align*}
  Moreover for $u \in \widetilde D^{1,p(\cdot)}(\Omega) $ we have the \Poincare\
  inequality 
  \begin{align}    \label{eq:poin-D}
    \norm{u - u_\Omega}_{L^{p(\cdot)}(\Omega)} & \le  c\,
    \diameter(\Omega)\, \norm{\nabla
      u}_{L^{p(\cdot)}(\Omega)} \,
  \end{align}
  with a constant $c$  depending on $n$, the Lipschitz constant, and the
  $\log$-H{\"o}lder constants of~${p}$.
\end{theorem}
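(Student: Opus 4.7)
My plan is to prove the Poincar\'e inequality first, since the algebraic identity will fall out easily once Poincar\'e is available. The inclusion $W^{k,p(\cdot)}(\Omega) \subseteq \widetilde D^{k,p(\cdot)}(\Omega)$ is immediate: on the bounded domain $\Omega$ one has $L^{p(\cdot)}(\Omega) \hookrightarrow L^{1}(\Omega) \hookrightarrow L^{1}_{\mathrm{loc}}(\Omega)$ by Theorem~\ref{einbettungen}, so $u \in W^{k,p(\cdot)}$ certainly satisfies $u \in L^{1}_{\mathrm{loc}}$ and $\nabla^k u \in L^{p(\cdot)}$.

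For the Poincar\'e inequality \eqref{eq:poin-D} I rely on the classical pointwise Sobolev representation
$$|u(x) - u_\Omega| \le c\,\int_{\Omega}\frac{|\nabla u(y)|}{|x-y|^{n-1}}\,dy \qquad \text{for a.e.\ } x \in \Omega,$$
valid on any bounded Lipschitz domain, obtained by covering $\Omega$ with finitely many subdomains that are star-shaped with respect to a ball and chaining. To make $u_\Omega$ meaningful one first needs $u \in L^{1}(\Omega)$; this follows from $\nabla u \in L^{p(\cdot)}(\Omega) \subseteq L^{1}(\Omega)$ together with the classical constant-exponent Poincar\'e inequality on $\Omega$. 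Setting $r := \diameter(\Omega)$ the set $\Omega \setminus B(x,r)$ is empty, so the whole integral is a ``near part'' which a standard dyadic-annulus decomposition estimates by $c\, r\, M|\nabla u|(x)$, where $M$ is the Hardy--Littlewood maximal operator. Taking $L^{p(\cdot)}$-norms and invoking the boundedness of $M$ on $L^{p(\cdot)}(\Omega)$ --- the heart of the matter, where the $\log$-H\"older hypothesis on $p$ and the strict inequality $p^- > 1$ are both essential --- yields \eqref{eq:poin-D} with the asserted dependence of the constant.

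With \eqref{eq:poin-D} in hand, the reverse inclusion $\widetilde D^{k,p(\cdot)}(\Omega) \subseteq W^{k,p(\cdot)}(\Omega)$ follows by induction on $k$. For $k=1$, if $u \in \widetilde D^{1,p(\cdot)}(\Omega)$ then $u - u_\Omega \in L^{p(\cdot)}(\Omega)$ by Poincar\'e, and the constant $u_\Omega$ itself lies in $L^{p(\cdot)}(\Omega)$ since $\Omega$ is bounded, hence $u \in L^{p(\cdot)}(\Omega)$ and therefore $u \in W^{1,p(\cdot)}(\Omega)$. For $k \ge 2$, apply the induction hypothesis to each component of $\nabla u \in \widetilde D^{k-1,p(\cdot)}(\Omega)$ to conclude $\nabla u \in W^{k-1,p(\cdot)}(\Omega)$; combined with the $k=1$ step this gives $u \in W^{k,p(\cdot)}(\Omega)$.

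The main obstacle is the $L^{p(\cdot)}$-boundedness of the maximal operator under only the $\log$-H\"older assumption --- a fact lying outside the classical toolkit and responsible for the precise dependence of the Poincar\'e constant on the data of $p$. Once it is invoked, the remaining ingredients (the pointwise Sobolev representation on Lipschitz domains and the dyadic decomposition) are purely classical and do not interact with the variable-exponent structure.
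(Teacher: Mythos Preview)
The paper does not supply its own proof of this theorem; it is one of the background results for which the authors write ``For details and proofs we refer to \cite{Len08}, \cite{DieHHR10}.'' Your argument is exactly the standard one found in those references: the pointwise Riesz-potential bound $|u(x)-u_\Omega|\le c\int_\Omega |\nabla u(y)|\,|x-y|^{1-n}\,dy$ on a bounded Lipschitz domain, the dyadic-annulus estimate of the fractional integral by $c\,\diameter(\Omega)\,M(|\nabla u|)(x)$, and Diening's theorem on the $L^{p(\cdot)}$-boundedness of the maximal operator under the $\log$-H\"older hypothesis with $p^->1$. So the approach matches.

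One small point in your induction step for $k\ge 2$: you assert $\nabla u \in \widetilde D^{k-1,p(\cdot)}(\Omega)$, which presupposes $\nabla u \in L^1_{\mathrm{loc}}(\Omega)$ --- not immediate from $u\in L^1_{\mathrm{loc}}$ and $\nabla^k u \in L^{p(\cdot)}$ alone. The cleanest fix is the bootstrap you already use elsewhere: since $\nabla^k u \in L^{p(\cdot)}(\Omega)\subset L^{p^-}(\Omega)$, the classical constant-exponent identity $\widetilde D^{k,p^-}(\Omega)=W^{k,p^-}(\Omega)$ gives $u\in W^{k,p^-}(\Omega)$, so every intermediate derivative $\nabla^j u$ lies in $L^{p^-}(\Omega)\subset L^1_{\mathrm{loc}}(\Omega)$. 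With this remark inserted, the induction goes through.
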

%


\begin{remark}\label{rem:wkp_loc}
  As a consequence of the above theorem we get the algebraic identity $$\widetilde D^{k,p(\cdot)}(\Omega)= W_\loc^{k,p(\cdot)}(\Omega)$$ for arbitray domains provided that $p \in \PPln$ satisfies $1<p^-\leqslant p^+<\infty$.
\end{remark}

As for classical Sobolev spaces we have that $D^{k,p(\cdot)}( \setR^n
)$ and $D^{k,p(\cdot)}_0( \setR^n )$ coincide.

\begin{lemma}\label{pro:D0=D}
  Let $p \in \PPln(\setR^n)$ satisfy $1<p^-\leqslant p^+<\infty$ and
  let $k \in \setN_0$. Then $C^\infty_0 (\setR^n)$ is dense in
  $D^{k,p(\cdot)}( \setR^n )$. Consequently we have 
  $D^{k,p(\cdot)}( \setR^n )=D^{k,p(\cdot)}_0( \setR^n )$. 
\end{lemma}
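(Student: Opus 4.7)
The plan is to approximate a given $u\in D^{k,\pp}(\setRR)$ in the seminorm $\|\nabla^k\cdot\|_{L^{\pp}(\setRR)}$ by elements of $C_0^\infty(\setRR)$ in two standard steps: mollification followed by a truncation in which $u$ is first corrected by a polynomial of degree $k-1$ on an annulus. Once density is achieved, the identification $D^{k,\pp}(\setRR)=D_0^{k,\pp}(\setRR)$ follows via Remark~\ref{rem:subspace}. The case $k=0$ is just the density of $C_0^\infty(\setRR)$ in $L^{\pp}(\setRR)$ recalled in Section~\ref{sec:preliminaries}, so I would assume $k\ge 1$.

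For the mollification step, let $\eta_\varepsilon$ be a standard mollifier and set $u_\varepsilon:=\eta_\varepsilon\ast u$. Since $p\in\PPln(\setRR)$ with $1<p^-\le p^+<\infty$, the Hardy--Littlewood maximal operator is bounded on $L^{\pp}(\setRR)$, hence convolution with $\eta_\varepsilon$ is uniformly bounded on $L^{\pp}(\setRR)$. Because $\nabla^k u_\varepsilon=\eta_\varepsilon\ast\nabla^k u$ and $C_0^\infty(\setRR)$ is dense in $L^{\pp}(\setRR)$, the usual three-$\varepsilon$ argument yields $\nabla^k u_\varepsilon\to\nabla^k u$ in $L^{\pp}(\setRR)$, i.e.\ $u_\varepsilon\to u$ in $D^{k,\pp}(\setRR)$.

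For the truncation step I would fix $\psi\in C_0^\infty(\setRR)$ with $\psi\equiv 1$ on $B_1(0)$ and $\operatorname{supp}\psi\subset B_2(0)$, put $\psi_R(x):=\psi(x/R)$, and work on the annulus $A_R:=B_{2R}(0)\setminus\overline{B_R(0)}$. Iterating Theorem~\ref{pro:D=W} on $A_R$ (or invoking its higher order polynomial counterpart), one obtains $P_R\in{\sf P_{k-1}}$ such that
\begin{align*}
\sum_{j=0}^{k-1}R^{j-k}\,\|\nabla^j(u-P_R)\|_{L^{\pp}(A_R)}\le c\,\|\nabla^k u\|_{L^{\pp}(A_R)},
\end{align*}
with $c$ independent of $R$: the Lipschitz character of $A_R$ is scale invariant, and the log-H{\"o}lder data of $p$ on $A_R$ are controlled uniformly because $p\in\PPln(\setRR)$. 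Setting $v_R:=\psi_R(u-P_R)\in C_0^\infty(\setRR)$ and using $\nabla^k P_R=0$ with the Leibniz rule,
\begin{align*}
\nabla^k v_R-\psi_R\,\nabla^k u=\sum_{j=0}^{k-1}\binom{k}{j}\,\nabla^{k-j}\psi_R\otimes\nabla^j(u-P_R).
\end{align*}
This is supported in $A_R$ and $|\nabla^{k-j}\psi_R|\le cR^{-(k-j)}$, so the polynomial Poincar\'e estimate above gives $\|\nabla^k v_R-\psi_R\nabla^k u\|_{L^{\pp}(\setRR)}\le c\,\|\nabla^k u\|_{L^{\pp}(A_R)}\to 0$ as $R\to\infty$ by absolute continuity of the $L^{\pp}$ modular, whereas $\psi_R\nabla^k u\to\nabla^k u$ in $L^{\pp}(\setRR)$ by dominated convergence. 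Thus $v_R\to u$ in $D^{k,\pp}(\setRR)$.

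The main obstacle is clearly this truncation: $u$ itself need not lie in any Lebesgue space, so $\psi_R u$ is not even defined in a useful sense, and the polynomial correction $P_R$ must be chosen precisely so that a scale invariant annular Poincar\'e-type inequality in variable exponent spaces forces the commutator $\nabla^{k-j}\psi_R\otimes\nabla^j(u-P_R)$ to vanish in $L^{\pp}(\setRR)$ as $R\to\infty$. It is exactly here that the global log-H{\"o}lder hypothesis $p\in\PPln(\setRR)$ enters essentially, both in transporting Theorem~\ref{pro:D=W} to the annuli $A_R$ with constants independent of scale, and in rendering the Hardy--Littlewood maximal operator (hence mollification) bounded on $L^{\pp}(\setRR)$.
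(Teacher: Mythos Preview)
The paper does not include a proof of this lemma; it is listed among the preliminary facts on homogeneous Sobolev spaces with proofs deferred to \cite{Len08} and \cite{DieHHR10}. Your two-step argument---mollification, then truncation by $\psi_R$ after subtracting a polynomial $P_R$ chosen via a Poincar\'e inequality on the annuli $A_R$ with constant independent of $R$---is exactly the standard route taken in those references, and it is correct. Your identification of the scale-uniform annular Poincar\'e estimate as the crux, and of the global $\log$-H\"older hypothesis as what makes both this and the maximal-operator bound work, is on target.

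One presentational slip: you write $v_R:=\psi_R(u-P_R)\in C_0^\infty(\setRR)$, but this requires $u$ to be smooth already. Since you mollified first, the truncation should be applied to $u_\varepsilon$ rather than to the original $u$; alternatively, reverse the order (truncate into compactly supported $W^{k,\pp}$, then mollify). Either way the two steps must be combined, e.g.\ by a diagonal argument, rather than treated as independent approximations of the same $u$.
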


In applications it happens that for a
function $u \in L^1_\loc(\Omega)$ one can show that $\nabla u \in
L^{p(\cdot)}(\Omega)$ and $\nabla^2 u \in L^{p(\cdot)}(\Omega)$. This information is neither
covered by the space $D^{1,p(\cdot )}(\Omega)$ nor by the space
$D^{2,p(\cdot )}(\Omega)$. Thus we introduce a new space containing
the full information. 

\begin{definition}\label{def:intersect-hom}
  Let $p$ be a bounded exponent in $\Omega$. The space $D^{(1,2),p(\cdot)}(\Omega)$
  consists of all equivalence classes $[u]_0$ with $u \in \widetilde
  D^{1,p(\cdot)}(\Omega)\cap \widetilde D^{2,p(\cdot)}(\Omega)$. We
  identify $u$ with its equivalence class $[u]_0$ and thus write $u$
  instead of $[u]_0$.  We equip the space $D^{(1,2),p(\cdot)}(\Omega)$
  with the norm
  \begin{align*}
    \|u\|_{D^{(1,2),p(\cdot)}(\Omega)}:=
    \|\nabla  u \|_{L^{p(\cdot)}(\Omega)} +\|\nabla ^2 u
    \|_{L^{p(\cdot)}(\Omega)}\,.
  \end{align*}
  Finally, we define the space $D^{(1,2),p(\cdot)}_0(\Omega)$ as the completion of $C^\infty _0 (\Omega)$ with respect to the norm
  ${\|\cdot\|_{D^{(1,2),p(\cdot)}(\Omega)}}$.
\end{definition}

Note that the space $D^{(1,2),p(\cdot)}(\Omega)$ is a subspace of
$D^{1,p(\cdot )}(\Omega)$ but not of $D^{2,p(\cdot
  )}(\Omega)$, because it consists of equivalence classes modulo
constants.  As in Remark~\ref{rem:subspace} one sees that
$D^{(1,2),p(\cdot)}_0(\Omega)$ can be viewed as a subspace of
$D^{(1,2),p(\cdot)}(\Omega)$. 

\begin{theorem}\label{IHSisBanach}\label{IHSisReflexive}\label{IHSisuconvex}
  The spaces $D^{(1,2),p(\cdot)}(\Omega)$, and
  $D^{(1,2),p(\cdot)}_0(\Omega)$ are separable Banach spaces which are reflexive if  $1<p^-\leqslant p^+<\infty$.
\end{theorem}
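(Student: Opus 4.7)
The plan is to mimic the proof of Theorem \ref{HSisBanach} and realize $D^{(1,2),p(\cdot)}(\Omega)$ as a closed subspace of a product of Lebesgue spaces, and then observe that $D^{(1,2),p(\cdot)}_0(\Omega)$ is in turn a closed subspace of $D^{(1,2),p(\cdot)}(\Omega)$.

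First, I would introduce the product space $X:=L^{p(\cdot)}(\Omega)^n\times L^{p(\cdot)}(\Omega)^{n^2}$, equipped with the norm $\|(v,w)\|_X:=\|v\|_{L^{p(\cdot)}(\Omega)}+\|w\|_{L^{p(\cdot)}(\Omega)}$. Using Theorem \ref{ppdual} and the fact that finite products inherit separability and (for $1<p^-\leqslant p^+<\infty$) reflexivity, $X$ is a separable Banach space which is reflexive in the relevant range. I would then consider the map
\begin{equation*}
    T\colon D^{(1,2),p(\cdot)}(\Omega)\to X,\qquad T(u):=(\nabla u,\nabla^2 u),
\end{equation*}
which is well defined on equivalence classes modulo constants (adding a constant does not affect $\nabla u$ or $\nabla^2 u$) and is, by the very definition of the norm, an isometry.

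The key step is to show that $T(D^{(1,2),p(\cdot)}(\Omega))$ is closed in $X$; this will give both completeness (hence the Banach property) and transfer separability/reflexivity to $D^{(1,2),p(\cdot)}(\Omega)$ as a closed subspace of $X$. So suppose $T(u_k)=(\nabla u_k,\nabla^2 u_k)\to(v,w)$ in $X$. I would exhaust $\Omega$ by a countable increasing family of relatively compact subdomains $\omega_j\Subset\Omega$ with Lipschitz boundary. On each $\omega_j$, Theorem \ref{pro:D=W} and the Poincar\'e inequality \eqref{eq:poin-D} show that after subtracting the mean $(u_k)_{\omega_j}$ the sequence $u_k-(u_k)_{\omega_j}$ is Cauchy in $W^{2,p(\cdot)}(\omega_j)$, with limit some $u^{(j)}\in W^{2,p(\cdot)}(\omega_j)$ satisfying $\nabla u^{(j)}=v|_{\omega_j}$ and $\nabla^2 u^{(j)}=w|_{\omega_j}$. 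Since on each overlap $\omega_i\cap\omega_j$ the two candidates differ only by a constant (both have the same gradient $v$), I can iteratively adjust constants to obtain a single $u\in L^1_{\loc}(\Omega)$ whose restriction to each $\omega_j$ represents $[u^{(j)}]_0$. By construction $u\in\widetilde D^{1,p(\cdot)}(\Omega)\cap\widetilde D^{2,p(\cdot)}(\Omega)$ with $T([u]_0)=(v,w)$, which proves closedness.

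Separability of $D^{(1,2),p(\cdot)}(\Omega)$ follows because every subset of the separable metric space $X$ is separable; reflexivity for $1<p^-\leqslant p^+<\infty$ follows because closed subspaces of reflexive Banach spaces are reflexive. For $D^{(1,2),p(\cdot)}_0(\Omega)$, being defined as the completion of $C^\infty_0(\Omega)$ with respect to a norm in which it isometrically embeds into the already complete space $D^{(1,2),p(\cdot)}(\Omega)$ (cf.\ the analogue of Remark \ref{rem:subspace}), it coincides with the closure of $C^\infty_0(\Omega)$ in $D^{(1,2),p(\cdot)}(\Omega)$, hence is itself a closed subspace of a separable reflexive space and inherits both properties. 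The main obstacle is the gluing step in closedness: ensuring that local primitives obtained via the Poincar\'e inequality on the exhausting subdomains can be matched up consistently by constants to yield a single representative on all of $\Omega$; once the countable exhaustion and the adjustment of constants are set up carefully, the rest of the argument is a direct transfer from the product space $X$.
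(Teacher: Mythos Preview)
The paper does not actually prove this theorem; it is stated without proof, with a blanket reference to \cite{Len08} and \cite{DieHHR10} for the details on homogeneous Sobolev spaces. Your approach---realizing $D^{(1,2),p(\cdot)}(\Omega)$ isometrically as a closed subspace of the product $L^{p(\cdot)}(\Omega)^n\times L^{p(\cdot)}(\Omega)^{n^2}$ via $[u]_0\mapsto(\nabla u,\nabla^2 u)$ and then passing to the closed subspace $D^{(1,2),p(\cdot)}_0(\Omega)$---is exactly the standard argument one expects in those references, and it is correct.

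One small technical point deserves attention. In the closedness step you appeal to Theorem~\ref{pro:D=W} and the Poincar\'e inequality~\eqref{eq:poin-D}, both of which carry the hypotheses $p\in\mathcal{P}^{\log}$ and $1<p^-\le p^+<\infty$. The statement you are proving, however, asserts the Banach and separability properties for an arbitrary bounded exponent; only the reflexivity clause assumes $1<p^-$, and $\mathcal{P}^{\log}$ is never assumed. The fix is painless: since $p$ is bounded, $L^{p(\cdot)}(\omega_j)\hookrightarrow L^1(\omega_j)$ on each relatively compact $\omega_j$ (Theorem~\ref{einbettungen}(1)), so the classical $L^1$-Poincar\'e inequality already shows that $u_k-(u_k)_{\omega_j}$ is Cauchy in $L^1(\omega_j)$, with a limit $u^{(j)}$ satisfying $\nabla u^{(j)}=v|_{\omega_j}$ in the sense of distributions. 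The gluing of the $u^{(j)}$ by adjusting constants then proceeds exactly as you describe, and one recovers $u\in\widetilde D^{1,p(\cdot)}(\Omega)\cap\widetilde D^{2,p(\cdot)}(\Omega)$ with $T([u]_0)=(v,w)$. With this adjustment the argument works in the full generality of the statement.
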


\begin{lemma}\label{pro:dense-D1-D2}
 Let $p \in \PPln(\setR^n)$ satisfy $1<p^-\leqslant p^+<\infty$. Then $C^\infty_0 (\setR^n)$ is dense in
 $D^{(1,2),p(\cdot)}(\setRR)$. Consequently we have $D^{(1,2),p(\cdot)}(\setRR)=D_0^{(1,2),p(\cdot)}(\setRR)$.
\end{lemma}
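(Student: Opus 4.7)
The plan is a ``mollify and truncate'' argument. The only genuine issue is the lower-order terms produced by the cutoff, which we neutralise by subtracting a constant -- legal because $D^{(1,2),p(\cdot)}(\setRR)$ consists of equivalence classes modulo constants. For the smoothing step, take a standard radial mollifier $\phi_\epsilon$ and set $u_\epsilon := u \ast \phi_\epsilon$. Since $p \in \PPln(\setRR)$ with $1 < p^- \le p^+ < \infty$, convolution with $\phi_\epsilon$ is uniformly bounded on $L^{p(\cdot)}(\setRR)$ (dominated by the Hardy--Littlewood maximal operator, whose $L^{p(\cdot)}$-boundedness in this setting is classical), and $\nabla^k u_\epsilon = (\nabla^k u) \ast \phi_\epsilon \to \nabla^k u$ in $L^{p(\cdot)}(\setRR)$ for $k=1,2$. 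Thus $u_\epsilon \to u$ in the seminorm, and it suffices to approximate a smooth representative by $C^\infty_0$-functions.

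For the truncation, fix $\eta \in C^\infty_0(\setRR)$ with $\eta\equiv 1$ on $B_1$, $\operatorname{supp}\eta\subset B_2$, and set $\eta_R(x):=\eta(x/R)$, so that $|\nabla^k\eta_R|\le c\,R^{-k}$ and $\operatorname{supp}\nabla\eta_R \subset A_R := B_{2R}\setminus B_R$. Given a smooth representative $w$ of the class, define
$$
v_R := \eta_R\,(w - c_R) \in C^\infty_0(\setRR), \qquad c_R := \dashint_{A_R} w\,dx,
$$
noting that $w$ and $w-c_R$ represent the same element of $D^{(1,2),p(\cdot)}(\setRR)$. Expanding $\nabla^j v_R - \nabla^j w$ for $j=1,2$ by the Leibniz rule yields ``far-field'' terms $(\eta_R - 1)\nabla^j w$ supported outside $B_R$, which tend to $0$ in $L^{p(\cdot)}(\setRR)$ as $R\to\infty$ by absolute continuity of the norm (guaranteed by $p^+<\infty$), together with ``annular'' terms supported in $A_R$.

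The annular terms are the crux. The cross term $\nabla\eta_R \otimes \nabla w$ is bounded by $c\,R^{-1}\,\|\nabla w\|_{L^{p(\cdot)}(A_R)} \to 0$. The lower-order pieces $(w - c_R)\nabla^j\eta_R$ are handled via the variable-exponent \Poincare\ inequality (Theorem~\ref{pro:D=W}) applied on $A_R$: the annuli are dilates of $A_1$, hence their Lipschitz constant is $R$-independent, and the log-H\"older constants of $p|_{A_R}$ are majorised by the global constants of $p$, so the \Poincare\ constant scales like $\operatorname{diam}(A_R)\sim R$. This gives
$$
\|(w - c_R)\,\nabla^j \eta_R\|_{L^{p(\cdot)}(A_R)} \le c\,R^{-j}\cdot R\,\|\nabla w\|_{L^{p(\cdot)}(A_R)} \to 0, \qquad j=1,2,
$$
as $R\to\infty$. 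A diagonal choice (first $\epsilon$ small, then $R$ large) then gives $v_{R,\epsilon}\to u$ in $D^{(1,2),p(\cdot)}(\setRR)$, proving density; the identification $D^{(1,2),p(\cdot)}(\setRR)=D^{(1,2),p(\cdot)}_0(\setRR)$ follows. The single hardest term is $(w-c_R)\nabla^2\eta_R$: without the freedom to subtract $c_R$ one would be stuck with $w\,\nabla^2\eta_R$, which admits no estimate in terms of the $D^{(1,2),p(\cdot)}$-seminorm alone. It is precisely the quotient-by-constants structure together with the scaling of the variable-exponent \Poincare\ inequality on annuli that saves the argument.
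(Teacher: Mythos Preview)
The paper does not prove this lemma; it is stated among the preliminary facts on homogeneous Sobolev spaces, with the reader referred to \cite{Len08} and \cite{DieHHR10} for details. Your mollify-and-truncate argument is the standard one and is correct. The crucial device---subtracting the annular mean $c_R$ before cutting off so that the worst term $(w-c_R)\nabla^2\eta_R$ can be controlled by the \Poincare\ inequality rather than by an (unavailable) $L^{p(\cdot)}$-bound on $w$ itself---is exactly the right idea, and your claim that the \Poincare\ constant from Theorem~\ref{pro:D=W} is uniform over the dilated annuli $A_R$ (scale-invariant Lipschitz character, local log-H\"older constants dominated by the global ones) is valid. If one prefers to sidestep that point entirely, taking $c_R:=\dashint_{B_{2R}}w\,dx$ and invoking the ball version of the \Poincare\ inequality (as in the proof of Lemma~\ref{lem:C0-D-1}) works just as well, since $\operatorname{supp}\nabla\eta_R\subset B_{2R}$.
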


We will also need the \emph{dual spaces} of homogeneous Sobolev spaces.
\begin{definition}\label{def:dualD}
  Let $p$ be a bounded exponent in $\Omega$, and let $k \in \setN_0$. The
  space $D^{-k,p(\cdot)}(\Omega)$ is defined as the dual of the space $D^{k,p'(\cdot)}_0(\Omega)$, 
  i.e.~$D^{-k,p(\cdot)}(\Omega):=(D^{k,p'(\cdot)}_0(\Omega))'$. 
\end{definition}

We let $L^{p(\cdot)}_0(\Omega)\subset L^{p(\cdot)}(\Omega)$ be the subspace of functions having vanishing mean value. Analogously we define the subspace $C_{0,0}^\infty(\Omega)\subset\tfo$.
\begin{lemma}\label{cor:C00-D-1}
  Let $p \in \PPln(\Omega)$ satisfy $1<p^-\leqslant p^+<\infty$, and $A\subset\Omega$ be a bounded domain with Lipschitz continuous boundary. Then $L^{p(\cdot)}_0(A)\embedding D^{-1,p(\cdot)}(\Omega)$ via $\skp{f}{u}=\int_\Omega f u \, dx$ for $f\in L^{p(\cdot)}_0(A)$ and $u\in\tfo$.
\end{lemma}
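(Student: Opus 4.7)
The plan is to show that for each $f \in L^{p(\cdot)}_0(A)$, the linear functional $T_f \colon u \mapsto \int_\Omega f u\,dx$, initially defined on $C_0^\infty(\Omega)$, extends to a bounded linear functional on $D^{1,p'(\cdot)}_0(\Omega)$, with $\|T_f\|_{D^{-1,p(\cdot)}(\Omega)} \lesssim \|f\|_{L^{p(\cdot)}(A)}$, and that the assignment $f \mapsto T_f$ is injective. Since $D^{-1,p(\cdot)}(\Omega) := (D^{1,p'(\cdot)}_0(\Omega))'$ and since $C_0^\infty(\Omega)$ is dense in $D^{1,p'(\cdot)}_0(\Omega)$ by definition, it suffices to establish the estimate on test functions and then pass to the limit.

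The key observation is that the mean-zero condition on $f$ lets us insert a constant: for any $u \in C_0^\infty(\Omega)$,
\[
T_f(u) = \int_A f u\,dx = \int_A f\,(u - u_A)\,dx,
\]
where $u_A := \dashint_A u\,dx$ is the mean of $u$ on $A$. Applying the generalized H\"older inequality (Theorem~\ref{hoelder}) we obtain
\[
|T_f(u)| \le 2\,\|f\|_{L^{p(\cdot)}(A)}\,\|u - u_A\|_{L^{p'(\cdot)}(A)}.
\]
Since $A$ is a bounded domain with Lipschitz boundary and $p \in \PPln(\Omega)$ with $1 < p^- \le p^+ < \infty$ implies $p' \in \PPln$ with the same kind of bounds, the Poincar\'e inequality of Theorem~\ref{pro:D=W} applies to $u|_A \in W^{1,p'(\cdot)}(A)$ and yields
\[
\|u - u_A\|_{L^{p'(\cdot)}(A)} \le c\,\diameter(A)\,\|\nabla u\|_{L^{p'(\cdot)}(A)} \le c\,\diameter(A)\,\|\nabla u\|_{L^{p'(\cdot)}(\Omega)}.
\]
Chaining these bounds gives $|T_f(u)| \le C\,\|f\|_{L^{p(\cdot)}(A)}\,\|u\|_{D^{1,p'(\cdot)}(\Omega)}$, which is the desired continuous estimate.

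Extending $T_f$ by density to all of $D^{1,p'(\cdot)}_0(\Omega)$ produces an element of $D^{-1,p(\cdot)}(\Omega)$ with operator norm controlled by $\|f\|_{L^{p(\cdot)}(A)}$, so the map $f \mapsto T_f$ is a bounded linear map $L^{p(\cdot)}_0(A) \to D^{-1,p(\cdot)}(\Omega)$. Injectivity follows from a standard localisation: if $T_f = 0$, then testing against $u \in C_0^\infty(A) \subset C_0^\infty(\Omega)$ gives $\int_A f u\,dx = 0$, which forces $f = 0$ a.e.\ in $A$ by the fundamental lemma of the calculus of variations. The only non-trivial step is the Poincar\'e estimate, and that is precisely where the hypotheses $p \in \PPln(\Omega)$, $1 < p^- \le p^+ < \infty$ and the Lipschitz regularity of $\partial A$ are used; everything else is essentially H\"older plus density.
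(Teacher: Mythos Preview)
Your proof is correct and follows essentially the same approach as the paper: subtract the mean $u_A$ using the zero-average condition on $f$, apply H\"older, and then the Poincar\'e inequality from Theorem~\ref{pro:D=W}. The paper's proof is terser (it does not spell out the density extension or injectivity), but the key estimate is identical.
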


\begin{proof}
Using Theorem \ref{pro:D=W} we get
\begin{equation}\bal\label{homest}
\skp{f}{u} = \int_{\Omega}f(x)(u(x)-u_A)\ dx &\le 2 \norm{f}_{L^{p(\cdot)}(\Omega)}\norm{u-u_{A}}_{L^{p'(\cdot)}(A)} \\
&\le c \norm{f}_{\lpr}\norm{\nabla u}_{L^{p'(\cdot)}(\Omega)}.
\eal\end{equation}
\end{proof}

If the domain $\Omega$ has a sufficiently large and nice boundary it
is not necessary to require as in the previous lemma that the function
$f$ has a vanishing mean value. For simplicity we formulate the result
only for the case of the half-space $\setR^n_>:=\{x\in\setRR| x_n>0\}$. We define $\setR^n_<$ accordingly and set $\Sigma:=\partial\setRR_>$.

\begin{lemma}\label{lem:C0-D-1}
  Let $p \in \PPln(\setR^n_>)$ satisfy $1<p^-\leqslant p^+<\infty$ and $A\subset\setR^n_>$ be a bounded domain. Then $L^{p(\cdot)}(A)\embedding D^{-1,p(\cdot)}(\setR^n_>)$. 
\end{lemma}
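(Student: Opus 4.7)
The plan is to mimic Lemma~\ref{cor:C00-D-1}: define the embedding by $\langle f,u\rangle := \int_A fu\,dx$ for $f\in L^{p(\cdot)}(A)$ and $u\in C_0^\infty(\setR^n_>)$, and show that it extends continuously to a bounded injective map into $D^{-1,p(\cdot)}(\setR^n_>)=(D_0^{1,p'(\cdot)}(\setR^n_>))'$. The H\"older inequality of Theorem~\ref{hoelder} gives
\begin{equation*}
|\langle f,u\rangle|\le 2\,\|f\|_{L^{p(\cdot)}(A)}\,\|u\|_{L^{p'(\cdot)}(A)},
\end{equation*}
so the core task is to produce a half-space Poincar\'e-type inequality
\begin{equation*}
\|u\|_{L^{p'(\cdot)}(A)}\le C(A,p)\,\|\nabla u\|_{L^{p'(\cdot)}(\setR^n_>)}
\end{equation*}
for every $u\in C_0^\infty(\setR^n_>)$. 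Unlike in Lemma~\ref{cor:C00-D-1} we can no longer subtract a mean value and absorb it into $f$; instead we exploit the extra Dirichlet information that $u$ vanishes near $\Sigma=\partial\setR^n_>$.

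To prove this Poincar\'e inequality, I would use that every $u\in C_0^\infty(\setR^n_>)$ satisfies $u(x',0)=0$, so $u(x',x_n)=\int_0^{x_n}\partial_n u(x',s)\,ds$ for every $x_n>0$. Choosing $R$ with $A\subset\{y\in\setR^n_>:|y|<R\}$ and applying the classical H\"older inequality on $(0,R)$ with the frozen conjugate pair $(p(x),p'(x))$ yields the pointwise bound
\begin{equation*}
|u(x)|^{p'(x)}\le R^{p'(x)/p(x)}\int_0^R|\partial_n u(x',s)|^{p'(x)}\,ds
\qquad(x\in A).
\end{equation*}
Integrating over $A$ and swapping the order of integration leaves the awkward fact that on the right-hand side the exponent is $p'(x',x_n)$ rather than $p'(x',s)$. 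Log-H\"older continuity of $p$ together with the boundedness of $A$ supplies the standard exchange-of-exponents estimate $a^{p'(x',x_n)}\le c\bigl(a^{p'(x',s)}+1\bigr)$ for $a\ge 0$ and $|x_n-s|\le R$; plugging it in and using the equivalence between the semi-modular and the norm (with a scaling/homogenization step to absorb the additive constant) yields the desired Poincar\'e estimate.

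The main obstacle is precisely this exchange of exponents: with a constant exponent the half-space Poincar\'e inequality is a one-line consequence of the FTC and H\"older, but in the variable-exponent setting one must reconcile $p'$ at the point where $u$ is evaluated with $p'$ at the point where $\nabla u$ is evaluated, and log-H\"older continuity of $p$ on the bounded set $B_R(0)\cap\setR^n_>$ is the right tool. Once the Poincar\'e inequality is in hand, density of $C_0^\infty(\setR^n_>)$ in $D_0^{1,p'(\cdot)}(\setR^n_>)$ extends the bound to all test functions, giving continuity of the embedding $f\mapsto\langle f,\cdot\rangle$. Injectivity is immediate: testing against $u\in C_0^\infty(A)\subset C_0^\infty(\setR^n_>)$ gives $\int_A fu\,dx=0$ for all such $u$, and by density of $C_0^\infty(A)$ in $L^{p'(\cdot)}(A)$ together with Theorem~\ref{ppdual} this forces $f=0$.
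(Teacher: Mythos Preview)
There is a genuine gap in your exchange-of-exponents step. The inequality $a^{p'(x',x_n)} \le c\bigl(a^{p'(x',s)} + 1\bigr)$ you invoke is \emph{false} for large $a$ whenever $p'(x',x_n) > p'(x',s)$: the left-hand side then grows strictly faster than the right. Log-H\"older continuity only gives $|p'(x',x_n) - p'(x',s)| \le C/\ln(e + |x_n - s|^{-1})$, which on a region of fixed diameter $R$ merely bounds the oscillation of $p'$ by a constant and provides no control whatsoever over $a^{p'(x',x_n) - p'(x',s)}$ when $a = |\partial_n u(x',s)|$ is unbounded. The way log-H\"older continuity is normally exploited is in situations where $a$ is itself bounded by a negative power of the relevant distance (as with maximal averages, where $a \lesssim |x-y|^{-n}$); here there is no link between $|\partial_n u|$ and $|x_n - s|$, so the argument cannot close. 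Your subsequent ``scaling/homogenization'' remark does not repair this, since it presupposes the modular inequality you have not established.

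The paper avoids this difficulty altogether. Extend $u \in C_0^\infty(\setR^n_>)$ by zero to all of $\setRR$, choose a ball $B(x_0)$ centred at a point $x_0 \in \Sigma$ with $A \subset B(x_0)$, and pick a ball $B' \subset B(x_0) \cap \setR^n_<$ with $|B'| \approx |A|$. Since $u \equiv 0$ on $\setR^n_<$ one has $u_{B'} = 0$, so H\"older gives $\langle f, u\rangle \le 2\,\|f\|_{L^{p(\cdot)}(A)}\,\|u - u_{B'}\|_{L^{p'(\cdot)}(B(x_0))}$, and a known variable-exponent Poincar\'e inequality with mean taken on a sub-ball (cf.~\cite{DieHHR10}) bounds the last factor by $c\,\|\nabla u\|_{L^{p'(\cdot)}(B(x_0))} \le c\,\|\nabla u\|_{L^{p'(\cdot)}(\setR^n_>)}$. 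Your FTC idea can in principle be salvaged, but only by routing through the $L^{p'(\cdot)}$-boundedness of a maximal or averaging operator, which is a substantially heavier tool than the direct modular computation you sketch.
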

\begin{proof}
 We choose a ball $B(x_0)\subset\setRR$ with $A\subset B(x_0)$ and $x_0\in\Sigma$. Next we choose a ball $B'\subset\setRR_<\cap B(x_0)$ with $|B'|\approx|A|$ and note that $u_{B'}=0$ for $u\in C_0^\infty(\setRR_>)$. Now we use a version of the \Poincare\ inequality (cf. \cite[Lemma 7.2.3 and  Theorem 7.2.4 (b)]{DieHHR10}) to obtain for all $u\in C_0^\infty(\setRR_>)$
\begin{equation}\bal\label{homest2}
\skp{f}{u} =\int_{B(x_0)}f(x)u(x)\ dx &\le 2 \norm{f}_{L^{p(\cdot)}(B(x_0))}\norm{u-u_{B'}}_{L^{p'(\cdot)}(B(x_0))} \\
&\le c \norm{f}_{L^{p(\cdot)}(A)}\norm{\nabla u}_{L^{p'(\cdot)}(\setRR_>)}.
\eal\end{equation}
\end{proof}

\begin{lemma}\label{pro:C00-denseD-1}
  Let $p \in \PPln(\Omega)$ satisfy $1<p^-\leqslant p^+<\infty$. Then
  $C_{0,0}^\infty (\Omega)$ is dense in $L^{p(\cdot)}(\Omega)$ and in $D^{-1,p(\cdot)}(\Omega)$.
\end{lemma}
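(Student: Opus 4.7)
The plan is to establish both density statements by a single Hahn--Banach argument, reducing each to the classical observation that an $L^1_{\loc}(\Omega)$-function whose integral against every zero-mean test function vanishes must itself be almost everywhere constant.

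For the first assertion I would appeal to Theorem~\ref{ppdual} to identify $(L^{p(\cdot)}(\Omega))^\ast\cong L^{p'(\cdot)}(\Omega)$ and argue by contrapositive: suppose $v\in L^{p'(\cdot)}(\Omega)$ satisfies $\int_\Omega \phi v\,dx=0$ for every $\phi\in C_{0,0}^\infty(\Omega)$. Fix once and for all some $\phi_0\in C_0^\infty(\Omega)$ with $\int_\Omega\phi_0\,dx=1$. For arbitrary $\phi\in C_0^\infty(\Omega)$ the function $\phi-\bigl(\int_\Omega\phi\,dx\bigr)\phi_0$ belongs to $C_{0,0}^\infty(\Omega)$, and plugging it into the annihilation hypothesis yields
\[
\int_\Omega \phi v\,dx=\Bigl(\int_\Omega\phi\,dx\Bigr)\cdot\int_\Omega\phi_0 v\,dx\qquad\text{for every } \phi\in C_0^\infty(\Omega).
\]
This forces $v$ to agree almost everywhere with a single constant on $\Omega$, and in the natural setting of the lemma---where $\Omega$ is unbounded and $p^+<\infty$, so that $p'^+<\infty$ as well---the only constant lying in $L^{p'(\cdot)}(\Omega)$ is $0$, hence $v=0$.

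For the second assertion, reflexivity from Theorem~\ref{HSisReflexive} gives $(D^{-1,p(\cdot)}(\Omega))^\ast\cong D^{1,p'(\cdot)}_0(\Omega)$. Given $\phi\in C_{0,0}^\infty(\Omega)$, choose a bounded Lipschitz subdomain $A\subset\Omega$ containing its support; Lemma~\ref{cor:C00-D-1} then embeds $\phi$ into $D^{-1,p(\cdot)}(\Omega)$ via the pairing $\skp{\phi}{u}=\int_\Omega\phi u\,dx$, the vanishing-mean condition on $\phi$ being precisely what makes this integral independent of the choice of representative of the equivalence class $u$. If $u\in D^{1,p'(\cdot)}_0(\Omega)$ annihilates every such $\phi$, the variational manipulation from the first part again forces any representative of $u$ to be almost everywhere constant on $\Omega$, and since constants are the distinguished zero element of the quotient $D^{1,p'(\cdot)}_0(\Omega)$, we conclude $u=0$. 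Hahn--Banach then delivers the claimed density.

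The main subtle points are the well-definedness of the $D^{-1,p(\cdot)}$-pairing on equivalence classes modulo constants (which is the whole reason the zero-mean restriction on $\phi$ is imposed) and the final ``constant $\Longrightarrow$ zero'' step: for the $D^{-1,p(\cdot)}$-part this is automatic from the quotient structure of the target space, whereas for the $L^{p(\cdot)}$-part it uses unboundedness of $\Omega$ to exclude nontrivial constants from $L^{p'(\cdot)}(\Omega)$.
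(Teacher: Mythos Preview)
The paper does not supply its own proof of this lemma; it is one of the results whose proof is deferred to \cite{Len08} and \cite{DieHHR10}. Your Hahn--Banach approach is the standard one and is essentially correct.

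One point deserves sharpening. You write that ``in the natural setting of the lemma $\Omega$ is unbounded'', and use this to conclude that the only constant in $L^{p'(\cdot)}(\Omega)$ is zero. The precise condition needed is $|\Omega|=\infty$ (an unbounded domain can have finite measure), and more importantly this is not merely the natural setting but a \emph{necessary} hypothesis: if $|\Omega|<\infty$ then $f\mapsto\int_\Omega f\,dx$ is a nonzero continuous functional on $L^{p(\cdot)}(\Omega)$ vanishing on $C_{0,0}^\infty(\Omega)$, so the closure of $C_{0,0}^\infty(\Omega)$ is contained in the proper hyperplane $L^{p(\cdot)}_0(\Omega)$ and the first assertion fails outright. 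The paper only invokes the lemma for $\Omega=\setR^n$, so this causes no trouble downstream, but you should flag that the statement as written is too permissive for the $L^{p(\cdot)}$-claim rather than absorb the restriction silently.

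For the $D^{-1,p(\cdot)}$-part your argument works for arbitrary $\Omega$, since constants are by construction the zero element of $D^{1,p'(\cdot)}_0(\Omega)$. The only step you pass over is that the abstract pairing $\skp{\phi}{u}$ for $u\in D^{1,p'(\cdot)}_0(\Omega)$ is still realized by the integral $\int_\Omega\phi u\,dx$ (Lemma~\ref{cor:C00-D-1} gives this only for $u\in C_0^\infty(\Omega)$). This follows by approximating $u$ by $u_k\in C_0^\infty(\Omega)$ in $D^{1,p'(\cdot)}(\Omega)$ and applying the Poincar\'e inequality \eqref{eq:poin-D} on a bounded Lipschitz set $A$ containing $\support\phi$: since $\phi$ has zero mean, $\int_\Omega\phi u_k\,dx=\int_A\phi\big(u_k-(u_k)_A\big)\,dx\to\int_A\phi\big(u-u_A\big)\,dx=\int_\Omega\phi u\,dx$.
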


We will also have to deal with trace
spaces of homogenous Sobolev spaces, at least in the
case of the half-space $\setR^n_>$. Traces are well defined for functions from $W^{1,1}_\loc$ and thus
the notion of a trace of a function from $\widetilde
  D^{1,p(\cdot)}(\setRR_>)$
is well defined. Consequently, the \emph{trace space} $\trace (
  D^{1,p(\cdot)}(\setR^n_>))$ consists of the (equivalence classes of) traces
of all (equivalence classes) $f \in D^{1,p(\cdot)}(\setR^n_>)$.  The norm
\begin{align*}
  \norm{f}_{\trace( D^{1,p(\cdot)}(\setR^n_>))} := \inf \bigset{
    \norm{g}_{D^{1,p(\cdot)}(\setR^n_>)} \colon g \in
    D^{1,p(\cdot)}(\setR^n_>) \text{ and } \trace g = f}
\end{align*}
makes $\trace (D^{1,p(\cdot)}(\setR^n_>))$ a Banach space. 

Another important issue is the extension of the theory of singular integrals to variable exponent spaces. We refer to \cite{CruFMP06}, \cite{DieHHR10} for proof of the following two theorems. Firstly, we will need the generalization of the classical \Calderon-Zygmund theorem. We will restrict ourselves to symmetric kernels, i.e., to kernels, depending only on the difference of their arguments. A (symmetric) \emph{kernel} $K$ in $\Omega$, $\Omega\subset\setRR$, is a locally integrable, real-valued funktion in $\Omega\setminus\{0\}$. 

\begin{theorem} Let $K$ be a kernel in $\setRR$ of the form
\begin{equation}
K(x)=\frac{P(x/|x|)}{|x|^n},
\label{kernform}
\end{equation}
where $P\in L^r(\pa B_1(0))$ for some $r\in(1,\infty]$, and satisfying
\begin{equation}
\int_{\pa B_1(0)}P\ d\omega = 0.
\label{intnull}
\end{equation}
Moreover let $p\in\PPln(\setR^n)$ be bounded with $p^->r'$.
Then the operator $T$, defined by
$$(Tf)(x):=\lim_{\epsilon\searrow 0}\int_{(B_\epsilon(x))^c} K(x-y)f(y)\ dy$$
is bounded on $\lpr$.
\label{hsabschgr}
\end{theorem}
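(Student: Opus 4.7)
The plan is to reduce this to weighted norm inequalities and then apply the extrapolation theorem of Cruz-Uribe, Fiorenza, Martell, and P\'erez. The fact that $P$ is only assumed to lie in $L^r(\partial B_1(0))$ means $K$ is a \emph{rough kernel}, so one cannot rely on the classical Calder\'on--Zygmund smoothness estimate $|\nabla K(x)|\lesssim|x|^{-n-1}$; this is the main obstacle and forces us to use the sharper weighted theory for rough kernels rather than standard Calder\'on--Zygmund theory.

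First I would invoke the classical weighted norm inequality for singular integrals with rough homogeneous kernels (due to Duoandikoetxea--Rubio de Francia and Watson): under the cancellation condition~\eqref{intnull} and $P \in L^r(\partial B_1(0))$, the principal-value operator $T$ is well defined and bounded on $L^{s}(\setRR, w\,dx)$ for every $s \in (r',\infty)$ and every weight $w$ in the Muckenhoupt class $A_{s/r'}$, with operator norm depending only on $n$, $s$, $\|P\|_{L^r}$, and the $A_{s/r'}$-constant of $w$. This replaces the usual assumption $w \in A_s$ that would be available for smooth Calder\'on--Zygmund kernels.

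Next I would apply the extrapolation machinery developed in \cite{CruFMP06} (see also \cite{DieHHR10}). Setting $q(\cdot):=p(\cdot)/r'$, the hypothesis $p^- > r'$ gives $q^->1$, and since $p \in \PPln(\setRR)$ we also have $q \in \PPln(\setRR)$; by the variable-exponent boundedness of the Hardy--Littlewood maximal operator on $L^{q(\cdot)}(\setRR)$ and on its dual $L^{q'(\cdot)}(\setRR)$, the extrapolation theorem of Cruz-Uribe--Fiorenza--Martell--P\'erez (tailored to the $A_{s/r'}$-setting via the substitution $s = r'q$) transfers the weighted estimate from Step~1 to the $L^{p(\cdot)}$-estimate
\begin{equation*}
  \|Tf\|_{L^{p(\cdot)}(\setRR)} \le c\, \|f\|_{L^{p(\cdot)}(\setRR)}
\end{equation*}
for $f$ in a dense subclass (e.g.\ $C_0^\infty(\setRR)$), with $c$ depending on $n$, $\|P\|_{L^r}$, and the $\log$-H\"older and lower/upper bounds of $p$.

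Finally I would extend the definition of $T$ from $C_0^\infty(\setRR)$ to all of $L^{p(\cdot)}(\setRR)$ by density, using that $C_0^\infty(\setRR)$ is dense in $L^{p(\cdot)}(\setRR)$ and that the a priori estimate just established makes $T$ a bounded linear operator on this dense subspace; the limit then coincides almost everywhere with the pointwise principal value on the dense subclass, so the standard truncation argument identifies the extension with the operator defined by the principal-value formula. The hard part is the first step: without the rough-kernel weighted theory one cannot get past the lack of smoothness of $K$, and the lower bound $p^- > r'$ is precisely the condition needed so that $p(\cdot)/r'$ lies in the range where variable-exponent extrapolation applies.
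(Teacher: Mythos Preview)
Your proposal is correct and follows precisely the approach the paper points to: the paper does not prove this theorem itself but explicitly refers the reader to \cite{CruFMP06} and \cite{DieHHR10}, and your two-step argument (weighted $L^s$-bounds for rough homogeneous kernels via Duoandikoetxea--Rubio de Francia/Watson, then limited-range Rubio de Francia extrapolation to $L^{p(\cdot)}$ as in Cruz-Uribe--Fiorenza--Martell--P\'erez) is exactly the content of those references. Your identification of the role of the hypothesis $p^- > r'$ as enabling the rescaled exponent $p(\cdot)/r'$ to land in the admissible range for variable-exponent extrapolation is the right explanation for that condition.
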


Furthermore we will need the analogue of the famous Agmon-Douglis-Nirenberg result for spaces with variable exponents. To this end, let us fix some notation. For $x=(x_1,\ldots,x_n)\in\setRR$ we set $x':=(x_1,\ldots,x_{n-1})\in\setR^{n-1}$ and $\tilde x:=(x_1,\ldots,x_{n-1},-x_n)$. Moreover we set $S:=\pa B_1$, $S^{n-2}:=\pa B_1^{n-1}\subset\setR^{n-1}$, $S_> :=S\cap\setRR_>$, and $S_\ge :=S\cap\setRR_\ge$ where $\setR^n_\ge:=\{x\in\setRR| x_n\ge 0\}$.

\begin{theorem}\label{hsabschhr}
Let  $K$ be a kernel  on $\setR^n_\ge $ of the form
\begin{equation*}
  K(x)=\frac{P(x/|x|)}{|x|^n},
\end{equation*}
where $P\colon S_\ge \to \setR$ is continuous and satisfies 
\begin{equation*}
  \int_{S^{n-2}}P(x',0)\ d\omega' = 0.
\end{equation*}
Assume that $K$ possesses continuous derivatives $\pa_i K$,
$i=1,\ldots,n$, and $\pa_n^2 K$ in $\setRR_>$ which are bounded on
the hemisphere $S_>$. Let $p \in \PPln(\setRR_>)$ satisfy $1< p^- \le p^+
<\infty$. For $f \in C^\infty_0(\setRR_\ge)$ we define $Hf\,:\,
\setRR_> \to \setR$ by
\begin{equation}
  (Hf)(x):=\int_\Sigma K(x'-y',x_n)f(y',0)\ dy'.
  \label{randop}
\end{equation}
Then $H$ satisfies
\begin{align}
  \norm{\nabla Hf}_{L^{\pp}(\setRR_>)}\le
  c\, \norm{\nabla f}_{L^{\pp}(\setRR_>)}, \label{est-randop}
\end{align}
for all $f \in C^\infty_0(\setRR_\ge)$ with a constant $c=c(p,n,P)$.
In particular, $H$ extends to a bounded linear operator
$H \colon D^{1,p(\cdot)}(\setRR_>) \to D^{1,p(\cdot)}(\setRR_>)$.
\end{theorem}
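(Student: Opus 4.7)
The plan is to reduce the bound on $\nabla Hf$ to $L^{p(\cdot)}$-estimates for singular integrals falling under the variable-exponent \Calderon--Zygmund Theorem~\ref{hsabschgr}, first for $f\in C_0^\infty(\setRR_\ge)$ and then by density to $D^{1,p(\cdot)}(\setRR_>)$. I would split $\nabla Hf$ into its tangential and normal components. For the tangential derivatives $\partial_i Hf$ with $i<n$, the identity $\partial_{x_i}K(x'-y',x_n)=-\partial_{y_i}K(x'-y',x_n)$ together with integration by parts in $y_i$ on $\Sigma$ yields
\begin{equation*}
  \partial_iHf(x)=\int_\Sigma K(x'-y',x_n)\,\partial_if(y',0)\,dy'=H(\partial_if)(x),
\end{equation*}
so that the tangential part of $\nabla Hf$ is controlled once we establish an $L^{p(\cdot)}$-estimate for $Hg$ in terms of $\|\nabla g\|_{L^{p(\cdot)}(\setRR_>)}$ for $g\in C_0^\infty(\setRR_\ge)$.

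The key technical step---used both for this intermediate estimate and for the normal derivative---is the conversion of the boundary integral into a volume integral with a reflected kernel. Rewriting the boundary value as $g(y',0)=-\int_0^\infty\partial_ng(y',y_n)\,dy_n$ and integrating by parts once in $y_n$, using $\partial_{y_n}[K(x'-y',x_n+y_n)]=(\partial_nK)(x'-y',x_n+y_n)$, one obtains the identity
\begin{align*}
  Hg(x)=-\int_{\setRR_>}(\partial_nK)(x'-y',x_n+y_n)\,g(y)\,dy-\int_{\setRR_>}K(x'-y',x_n+y_n)\,\partial_ng(y)\,dy.
\end{align*}
The decisive feature is that both kernels are evaluated at $x-\tilde y$ with $\tilde y:=(y',-y_n)\in\setR^n_<$; the argument therefore lies in $\setRR_>$ where $K$ and $\partial_nK$ are smooth and bounded on the hemisphere $S_>$. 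Reflecting $g$ from $\setRR_>$ to $\setR^n_<$ via $\bar g(z):=g(z',-z_n)$ and extending by zero turns each term into a convolution on $\setRR$. The $K$-term fits the \Calderon--Zygmund framework directly, provided $P$ is extended continuously from $S_\ge$ to the whole sphere with vanishing spherical mean, which is possible precisely because the compatibility hypothesis $\int_{S^{n-2}}P(x',0)\,d\omega'=0$ holds at the equator. The $\partial_nK$-term, formally of order $-n-1$, is reduced to \Calderon--Zygmund-type integrals by further integration by parts and cancellation arguments that exploit the strict positivity $x_n+y_n>0$ of the separation. An entirely analogous manipulation, with the outer $\partial_{x_n}$ absorbed via $\partial_{x_n}=\partial_{y_n}$ on $K(x'-y',x_n+y_n)$, handles $\partial_nHf$ directly.

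Combining the tangential and normal estimates yields \eqref{est-randop} on $C_0^\infty(\setRR_\ge)$. Since $C_0^\infty(\setRR_\ge)$ is dense in $D^{1,p(\cdot)}(\setRR_>)$ (a half-space analogue of Lemma~\ref{pro:dense-D1-D2}), $H$ extends uniquely to the asserted bounded operator $D^{1,p(\cdot)}(\setRR_>)\to D^{1,p(\cdot)}(\setRR_>)$. The principal obstacle is the normal derivative $\partial_nHf$: naive differentiation under the integral produces an $|x-y|^{-n-1}$-kernel beyond the reach of the variable-exponent \Calderon--Zygmund theorem, and the iterated integration-by-parts and reflection scheme above---which simultaneously brings the evaluation point inside $\setRR_>$ and redistributes derivatives so as to restore $(-n)$-homogeneity---together with the precise use of the cancellation condition $\int_{S^{n-2}}P(x',0)\,d\omega'=0$ at the equator is what returns the estimate to the scope of Theorem~\ref{hsabschgr}.
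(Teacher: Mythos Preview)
The paper does not prove Theorem~\ref{hsabschhr}; it is stated as a black-box result with the proof deferred to \cite{CruFMP06} and \cite{DieHHR10} (see the sentence preceding Theorem~\ref{hsabschgr}). So there is no in-paper proof to compare against, and your sketch must stand on its own.

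Your reflection identity
\[
Hg(x)=-\int_{\setRR_>}(\partial_nK)(x'-y',x_n+y_n)\,g(y)\,dy-\int_{\setRR_>}K(x'-y',x_n+y_n)\,\partial_ng(y)\,dy
\]
is correct and is indeed the standard starting point. The second term reduces to a genuine \Calderon--Zygmund convolution after extending $P$ across the equator with vanishing spherical mean, exactly as you say. The difficulty is the first term, and here your sketch has a real gap.

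First, the reduction you propose for the tangential derivatives does not close. You write $\partial_iHf=H(\partial_if)$ and then say the tangential part is controlled once one proves $\norm{Hg}_{L^{\pp}}\le c\norm{\nabla g}_{L^{\pp}}$. But applying that estimate with $g=\partial_if$ yields a bound by $\norm{\nabla\partial_if}_{L^{\pp}}$, i.e.\ by \emph{second} derivatives of $f$, not by $\norm{\nabla f}_{L^{\pp}}$ as required. (The alternative $\norm{Hg}\le c\norm{g}_{L^{\pp}}$, which would give the right conclusion, is false: $Hg$ depends only on $g|_\Sigma$, so $\norm{g}_{L^{\pp}(\setRR_>)}$ can be made arbitrarily small while $Hg$ stays fixed.)

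Second, the phrase ``reduced to \Calderon--Zygmund-type integrals by further integration by parts'' for the $\partial_nK$-term hides the actual obstacle. The kernel $(\partial_nK)(x-\tilde y)$ is homogeneous of degree $-(n+1)$, and a scaling argument shows the associated operator cannot be bounded $L^{\pp}\to L^{\pp}$. The natural integration by parts in $y_n$ is circular: since $\partial_{y_n}K(x-\tilde y)=(\partial_nK)(x-\tilde y)$, undoing one $\partial_n$ produces a boundary term that is exactly $Hg$ again. The same circularity occurs in your treatment of $\partial_nHf$ via $\partial_{x_n}=\partial_{y_n}$. This is precisely why the hypothesis on $\partial_n^2K$ appears in the statement: the actual argument (as in \cite{DieHHR10}) needs a more careful decomposition that simultaneously produces a degree-$(-n)$ kernel and places a single derivative on $f$, rather than iterating the fundamental theorem of calculus in $y_n$ alone.
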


Throughout the paper we will make use of Einsteins summation convention, i.e. whenever there is an index appearing twice in a monomial this implies that we are summing over all of its possible values.

\section{Stokes system}
\label{sec:Main}

In this section we assume that $\Omega{}$ is a bounded domain
in $\setR^n$, $n \ge 2$, with $C^{1,1}$-boundary. We want to show that the
Stokes system 
\begin{equation}\label{eq:stokes}
\begin{aligned}
\Delta \bv - \nabla \pi &=\ff & &\mbox{in } \Omega,\\
\divergenz \bv &=g & &\mbox{in } \Omega,\\
\bv&=\bv_0 & &\mbox{on } \partial\Omega,
\end{aligned}
\end{equation}
possesses a unique strong and weak solution, respectively, provided that the data have appropriate
regularity. More precisely we prove:

\begin{theorem}\label{thm:stokes-strong}
  Let $p \in \PPln(\Omega)$ satisfy $1< p^- \le p^+ <\infty$. For arbitrary data $\ff \in (L^{p(\cdot)}(\Omega))^n$, $g \in
  W^{1,\pp}(\Omega)$ and $\bv_0 \in \trace (W^{2,p(\cdot
    )}(\Omega))^n$ satisfying the compatibility condition $
  \int\nolimits_\Omega g\ dx =\int\nolimits_{\pa\Omega} \bv_0 \cdot
  \bfnu\ d\omega $ there exists a unique strong solution $(\bv,\pi)
  \in (\wzpo)^n \times \wepo$ of the Stokes system \eqref{eq:stokes} with $\int\nolimits_\Omega \pi \ dx=0$ and which satisfies the estimate
  \beq\bal
    \normzpo{\bv} + &\normepo{\pi} \\
    &\le c \left(\normpo{\ff} +
      \norm{{\bv}_0}_{\trace (W^{2,p(\cdot
          )}(\Omega))} +\normepo{g} \right), 
  \eal\eeq
  where the constant $c$ depends only on the domain $\Omega$ and the
  exponent $p$.
\end{theorem}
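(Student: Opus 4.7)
My plan is to reduce the problem to the homogeneous case $\bv_0=0$ and $g=0$, then derive the a priori estimate for the reduced problem by a localization argument that reduces everything to whole-space and half-space Stokes problems, and finally obtain existence by approximating the data by smooth functions and invoking the classical constant-exponent $L^q$-theory.

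First I would apply Theorem~\ref{expoext} to extend $\bv_0$ to some $\bv_0^\ast\in (W^{2,p(\cdot)}(\Omega))^n$ and set $\bv\mapsto\bv-\bv_0^\ast$, reducing to the case $\bv_0=0$ at the cost of modifying $\ff$ and $g$. The compatibility condition then takes the form $\int_\Omega g\,dx=0$. To remove $g$ I would invoke a variable-exponent Bogovski\u{\i} operator on the $C^{1,1}$-domain $\Omega$, producing $\mathbf{w}\in(W^{2,p(\cdot)}(\Omega)\cap W^{1,p(\cdot)}_0(\Omega))^n$ with $\divergenz\mathbf{w}=g$ and $\normzpo{\mathbf{w}}\le c\,\normepo{g}$; the $W^{k,p(\cdot)}$-boundedness of Bogovski\u{\i}'s construction is inherited from its representation as a finite sum of \Calderon-Zygmund operators, so Theorem~\ref{hsabschgr} applies. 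After subtracting $\mathbf{w}$ only the homogeneous system $\Delta\bv-\nabla\pi=\ff$, $\divergenz\bv=0$, $\bv|_{\partial\Omega}=0$ remains.

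For the reduced system I would fix a finite cover of $\overline\Omega$ by balls $B_i$ of small radius and a subordinate partition of unity $\{\varphi_i\}$. For an interior ball, $\varphi_i\bv$ and $\varphi_i\pi$ (extended by zero) solve a Stokes system on $\setR^n$ with right-hand side $\varphi_i\ff$ plus commutator terms involving $\bv,\nabla\bv,\pi$; the whole-space Stokes estimate, obtained by writing $\nabla^2\bv$ and $\nabla\pi$ as \Calderon-Zygmund convolutions with second derivatives of the Oseen tensor and invoking Theorem~\ref{hsabschgr}, yields the desired local bound. For a boundary ball, a $C^{1,1}$ chart straightening $\partial\Omega$ turns the system into a perturbed Stokes system on $\setR^n_>$ whose perturbation is controlled by the diameter of the chart. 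The corresponding half-space estimate comes from the explicit Green's tensor, which decomposes into the Oseen tensor (treated as before) plus a boundary-correction term whose critical derivatives have exactly the form of the operator $H$ in \eqref{randop}; its boundedness on $D^{1,p(\cdot)}(\setR^n_>)$ is precisely Theorem~\ref{hsabschhr}. Summing the local estimates, choosing the $B_i$ small enough that log-H\"older continuity makes the perturbation terms small, and absorbing the lower-order commutator remainders via the compact embedding of Theorem~\ref{einbettungen}, gives the full a priori estimate.

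For existence I would mollify $\ff,g,\bv_0$, invoke the classical $L^q$ Stokes theory with some constant $q>p^+$ to obtain smooth solutions in $W^{2,q}\times W^{1,q}$, embed this into $W^{2,p(\cdot)}\times W^{1,p(\cdot)}$ via Theorem~\ref{einbettungen}, and pass to the weak limit using the a priori estimate. Uniqueness follows at once by applying the estimate to the difference of two solutions, the normalization $\int_\Omega\pi\,dx=0$ fixing the pressure. The main obstacle is the half-space step: the Dirichlet Green's tensor for Stokes on $\setR^n_>$ is not a symmetric convolution kernel, so one must carry out an Agmon-Douglis-Nirenberg-style algebraic decomposition to exhibit its dangerous derivatives as operators covered by Theorem~\ref{hsabschhr}; the companion pressure estimate additionally requires handling $\pi$ in the homogeneous dual space $D^{-1,p(\cdot)}(\setR^n_>)$ through Lemma~\ref{lem:C0-D-1}, so that terms that are only in $L^{p(\cdot)}$ of a bounded set can be tested against $W^{1,p'(\cdot)}_0$-functions in a scale-invariant way.
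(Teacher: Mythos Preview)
Your overall architecture---reduce to zero boundary data, localize, treat interior pieces by whole-space Calder\'on--Zygmund theory and boundary pieces by half-space Agmon--Douglis--Nirenberg theory, then kill the lower-order remainders by compactness---is the paper's strategy. The gap is your Bogovski\u{\i} step. The higher-order mapping property of Bogovski\u{\i}'s operator, already in the constant-exponent case, is $W^{m,q}_0\cap L^q_0\to W^{m+1,q}_0$: the input must have zero trace, because upgrading the order requires transferring a derivative from the (non-convolution) kernel to $g$ by integration by parts, and this produces boundary terms unless $g$ vanishes on $\partial\Omega$. Here $g\in W^{1,\pp}(\Omega)$ satisfies only $\int_\Omega g=0$, and on a merely $C^{1,1}$ domain there is no cheap alternative construction of $\mathbf w\in W^{2,\pp}\cap W^{1,\pp}_0$ with $\divergenz\mathbf w=g$ that does not already amount to solving a Stokes-type boundary value problem. (Incidentally, Theorem~\ref{hsabschgr} covers only symmetric convolution kernels, so even the first-order variable-exponent Bogovski\u{\i} bound is not a direct corollary of it.)

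The paper avoids this by \emph{not} reducing to $g=0$. The divergence datum is carried through the localization (Theorems~\ref{sttrafogr} and~\ref{sttrafohr}); after multiplication by the cutoff it has compact support, and in the whole-space step (Corollary~\ref{stmaingr}) it is removed by $\mathbf h:=-L(\nabla g)$, with $L$ the Newton-potential operator of Theorem~\ref{cor:pabl}: the localized $\nabla g$ has vanishing mean, so $\mathbf h\in D^{(1,2),p(\cdot)}$ with $\Delta\mathbf h=\nabla g$ and $\divergenz\mathbf h=g$ and the required estimates. One further correction: the smallness that lets you absorb the coefficient perturbation after flattening the boundary comes from $\nabla a(0)=0$ together with the $C^{1,1}$ regularity of the chart (so $\abs{\nabla a}$ is small on small patches), not from the $\log$-H\"older continuity of~$p$.
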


\begin{theorem}\label{thm:stokes-weak}
  Let $p \in \PPln(\Omega)$ satisfy $1< p^- \le p^+ <\infty$. For arbitrary data $\ff \in (W^{-1,\pp}(\Omega))^n$, $g\in \lpo$ and $\bv_0\in \trace((\wepo)^n)$ satisfying the above compatibility condition there exists a unique weak solution $(\bv,\pi) \in (\wepo)^n\times \lpo$ of the Stokes system \eqref{eq:stokes} with $\int\nolimits_\Omega \pi \ dx=0$ and which satisfies the estimate
  \beq\bal
 \normepo{\bv} + &\normpo{\pi}\\ 
&\le c\left( \normnegpo{\ff} + \normpo{g} + \norm{\bv_0}_{\trace(\wepo)}\right),
  \eal\eeq
where the constant $c$ depends only on the domain $\Omega$ and the
  exponent $p$.
\end{theorem}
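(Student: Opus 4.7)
My plan is to prove Theorem~\ref{thm:stokes-weak} by mirroring the strategy used for Theorem~\ref{thm:stokes-strong}, but one order lower in regularity.

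First I would reduce to homogeneous data. Using the trace space definition, extend $\bv_0$ to some $\ti{\bv}_0 \in (W^{1,p(\cdot)}(\Omega))^n$ of controlled norm. Then $\ti g := g - \divergenz \ti{\bv}_0 \in \lpo$ has vanishing mean value by the compatibility condition, and a variable-exponent Bogovskii operator (obtained by combining the classical construction with the Calder\'on--Zygmund theorem, Theorem~\ref{hsabschgr}) produces $\mathbf{w} \in (\wnepo)^n$ with $\divergenz \mathbf{w} = \ti g$ and $\normepo{\mathbf{w}} \le c\,\normpo{\ti g}$. Setting $\mathbf{u} := \bv - \ti{\bv}_0 - \mathbf{w}$ and $\ti\ff := \ff + \Delta(\ti{\bv}_0 + \mathbf{w})$ reduces matters to finding $(\mathbf{u},\pi) \in (\wnepo)^n \times L^{p(\cdot)}_0(\Omega)$ with $\divergenz \mathbf{u} = 0$ and $-\Delta \mathbf{u} + \nabla \pi = \ti\ff$ in~$\Omega$, where $\ti\ff \in (W^{-1,p(\cdot)}(\Omega))^n$ has norm controlled by the original data.

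Next I would approximate $\ti\ff$ by $\ti\ff_k \in (\tfo)^n$ in the $W^{-1,p(\cdot)}$-norm (possible since $\tfo$ is dense in $L^{p'(\cdot)}(\Omega)$) and use Theorem~\ref{thm:stokes-strong} to produce strong solutions $(\mathbf{u}_k,\pi_k)$ to the reduced problem. The heart of the argument is the uniform a priori bound
\begin{equation*}
    \normepo{\mathbf{u}_k} + \normpo{\pi_k} \le c\,\normnegpo{\ti\ff_k},
\end{equation*}
which is \emph{not} supplied by the strong bound. I would establish it by rerunning the localization argument from the proof of Theorem~\ref{thm:stokes-strong} one order lower in regularity: a partition of unity subordinate to a finite cover of $\bar\Omega$, together with a straightening of the boundary via the $C^{1,1}$ atlas, reduces the estimate to whole-space and half-space contributions represented through the Stokes fundamental solution and the associated boundary-layer kernels. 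Writing $\ti\ff_k = \pa_j F_{k,j}$ with $F_{k,j} \in (\lpo)^n$ of norm comparable to $\normnegpo{\ti\ff_k}$, the bounds on $\nabla \mathbf{u}_k$ and $\pi_k$ follow from the $L^{p(\cdot)}$-boundedness of Calder\'on--Zygmund operators (Theorem~\ref{hsabschgr}) in the interior and from Theorem~\ref{hsabschhr} near the boundary, applied now to the \emph{first} derivatives of the Stokes kernel rather than to its second derivatives. Commutator terms generated by the cutoffs contribute lower-order errors that are absorbed using the compact embedding $\wnepo \compactembedding \lpo$ (Theorem~\ref{einbettungen}) together with an $L^{p(\cdot)}$-bound on $\mathbf{u}_k$ obtained by testing against adjoint strong Stokes solutions furnished by Theorem~\ref{thm:stokes-strong} for the exponent~$p'$. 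Once the velocity estimate is in hand, the pressure bound follows by choosing $(\wnepo)^n$-valued test functions with prescribed divergence via the Bogovskii operator and applying Theorem~\ref{ppdual}.

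Passage to the limit and uniqueness are comparatively routine. Reflexivity (Theorem~\ref{HSisReflexive}) extracts a weakly convergent subsequence $(\mathbf{u}_k,\pi_k) \rightharpoonup (\mathbf{u},\pi)$; linearity of the weak formulation allows passage to the limit in each term, and lower semicontinuity of the norms transfers the bound. For uniqueness, suppose $(\mathbf{u},\pi)$ solves the reduced problem with zero data. For arbitrary $\mathbf{h}\in (L^{p'(\cdot)}(\Omega))^n$ let $(\boldsymbol\psi,\sigma)$ be the strong solution provided by Theorem~\ref{thm:stokes-strong} (for exponent $p'$) to $-\Delta\boldsymbol\psi + \nabla\sigma = \mathbf{h}$, $\divergenz \boldsymbol\psi = 0$, $\boldsymbol\psi|_{\pa\Omega}=0$. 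Testing the weak equation against $\boldsymbol\psi$ and integrating by parts gives $\int_\Omega \mathbf{u}\cdot\mathbf{h}\,dx = 0$ for every such $\mathbf{h}$, so Theorem~\ref{ppdual} forces $\mathbf{u}=0$; then $\nabla\pi=0$ and the mean-zero normalization give $\pi=0$. The hard step is the weak-norm a priori bound: the equation cannot be closed by self-testing as in the Hilbertian ($p\equiv 2$) case, and duality with Theorem~\ref{thm:stokes-strong} for the dual exponent alone yields only an $L^{p(\cdot)}$-bound on the velocity. The localization strategy succeeds because the variable-exponent Calder\'on--Zygmund and Agmon--Douglis--Nirenberg theorems (Theorems~\ref{hsabschgr} and~\ref{hsabschhr}) apply to any order of derivative of the Stokes kernels, and the essential auxiliary ingredient is the variable-exponent Bogovskii operator, which underlies both the initial reduction and the recovery of the pressure.
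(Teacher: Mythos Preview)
Your overall strategy is sound, and it shares the core localization idea with the paper, but the execution differs in several places worth noting. In the paper, Theorems~\ref{thm:stokes-strong} and~\ref{thm:stokes-weak} are proved \emph{simultaneously}: the interior and boundary localization lemmas (Theorems~\ref{sttrafogr} and~\ref{sttrafohr}) already carry both the second-order \emph{and} the first-order estimates, so no separate ``one order lower'' rerun is needed. Starting from smooth data and the classical strong solution, the paper combines these local estimates over a finite cover to obtain
\[
\normpo{\nabla\bv}+\normpo{\pi}\le c\big(\normnegpo{\ff}+\normpo{g}+\normpo{\bv}+\norm{\pi}_{W^{-1,\pp}}\big),
\]
and then removes the last two terms not by duality with the $p'$-strong problem, but by a compactness/contradiction lemma: if the estimate failed along a normalized sequence, the compact embeddings $\wnepo\compactembedding\lpo$ and $\lpo\compactembedding W^{-1,\pp}(\Omega)$ produce a nontrivial weak solution with zero data, contradicting the classical $W^{1,p^-}$ uniqueness. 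The solution operator is then extended by density; uniqueness is obtained directly from $\wepo\hookrightarrow W^{1,p^-}(\Omega)$ and the classical theory rather than by duality. The paper also keeps $g$ throughout and never invokes a Bogovski\u{\i} operator.

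Your route---reduce to $g=0$ via Bogovski\u{\i}, control $\normpo{\mathbf u_k}$ by testing against $W^{2,p'(\cdot)}$ adjoint solutions, and recover the pressure via Bogovski\u{\i} again---is a legitimate alternative, and the duality uniqueness argument is correct. Its advantage is that it gives a direct bound on the lower-order velocity term; its cost is that it imports an additional nontrivial ingredient (the variable-exponent Bogovski\u{\i} operator) and relies on having Theorem~\ref{thm:stokes-strong} for the dual exponent already in hand. The paper's compactness argument is more self-contained and handles the lower-order pressure term $\norm{\pi}_{W^{-1,\pp}}$ in the same stroke, which in your scheme would still need separate attention before you can close the a~priori estimate.
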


We call $(\bv,\pi)$ a \emph{strong solution} of \eqref{eq:stokes} provided that it satisfies the differential equations in \eqref{eq:stokes} in the sense of weak derivatives. Furthermore we call $(\bv,\pi)$ a \emph{weak solution} of \eqref{eq:stokes} provided that
\begin{equation*}
\begin{aligned}
\int_\Omega \nabla\bv : \nabla\bphi\ dx - \int_\Omega \pi \cdot\divergenz \bphi\ dx &= -\langle \ff,\bphi\rangle & &\forall\bphi\in(W_0^{1,\pps}(\Omega))^n,\\
\divergenz \bv &=g & &\mbox{in } \Omega,\\
\bv&=\bv_0 & &\mbox{on } \partial\Omega.
\end{aligned}
\end{equation*}
In fact it is sufficient to consider homogeneous boundary conditions, i.e.~$\bv _0 =\mathbf 0$. In
the general case, when considering strong solutions, we take a realization $\tilde\bv_0\in (\wzpo)^n$ of $\bv_0\in \trace((\wzpo)^n)$ and construct a strong solution $(\bv,\pi)$ for the data $\ff-\Delta \tilde\bv_0$, $g-\divergenz \tilde\bv_0$ and vanishing boundary values. Note that $\int_\Omega (g-\divergenz \tilde{\bv}_0) dx= 0$. Then $(\bv:=\bu + \tilde{\bv}_0,\pi)$ is the unique strong solution, satisfying the assertions of the theorem. When dealing with weak solutions we may proceed in a very similar way. 

Although the proof of Theorems \ref{thm:stokes-strong} and \ref{thm:stokes-weak} is based on the classical, i.e. constant exponent, theory of the Stokes system (cf. \cite{Gal94}), we will nevertheless have to follow the strategy of the proof of the classical case all over again, i.e., we will use a localisation technique to reduce the problem in general bounded domains to the problem in the whole-space and in the half-space. The treatment of these situations is based on the \Calderon-Zygmund theory of singular integral operators and on the Agmon-Douglis-Nirenberg theory of operators in the half-space.

It should be emphasized that deriving analogous results for the Poisson equation is much simpler than our task, although the proof essentially follows the same idea. The crucial difference is that the fundamental solution of the Poisson equation in half-space is given as the whole-space fundamental solution plus its own odd reflection. This simplifies the half-space case considerably. Indeed, the Agmon-Douglis-Nirenberg theory is not needed in the treatment of the Poisson equation. An analogous ansatz in the case of the Stokes system yields non-divergence free functions und thus won't work. This is the reason for the need of the Agmon-Douglis-Nirenberg theory and homogenous Sobolev spaces in this paper.

Since the structure of the fundamental solutions of the Stokes system is different for $n=2$ and $n\ge 3$, we restrict ourselves to the latter case. The methods presented here can be easily adapted to treat also the case $n=2 $. Furthermore, using Theorem \ref{expoext}, a well behaved exponent given on a bounded domain will always be extended to the whole of $\setRR$ without mentioning.

Solutions of the system
\begin{equation}\label{eq:stokes1}
\begin{aligned}
\Delta \bv - \nabla \pi &=\ff & &\mbox{in } \setR^n,\\
\divergenz \bv &=g & &\mbox{in } \setR^n,
\end{aligned}
\end{equation}
are obtained by a convolution of the {\em fundamental solutions} $\big(-\frac{1}{2\abs{\partial B_1}} V^{rl}\big)_{r,l=1,\ldots,n}$ and $\big(-\frac 1{\abs{\partial B_1}} Q^l\big)_{l=1,\ldots,n}$ of
the Stokes system in the whole-space, given by 
$$ V^{rl}(x) := \left (\frac{1}{n-2}\frac{\delta_{rl}}{|x|^{n-2}} +
  \frac{x_rx_l}{|x|^n}\right )\qquad \mbox{ and } \qquad Q^l(x):=  \frac{x_l}{|x|^n},$$
with data $\ff $ and $g$. From the classical
theory it is well known that the kernels $\pa_{i}\pa_j V^{rl}$ and
$\pa_iQ^l$ satisfy the assumptions of Theorem
\ref{hsabschgr}. Consequently we get:

\begin{lemma}\label{stabl}
  Let $p \in \PPln(\setR^n)$ satisfy $1< p^- \le p^+ <\infty$, and let
  $\ff\in(C^\infty_{0,0}(\setRR))^n$.  Then the convolutions
  $\bv(x):=\int\nolimits_{\setR^n}\bV(x-y)\ff(y)dy $ and
  $\pi(x):=\int\nolimits_{\setR^n}\bQ(x-y)\cdot\ff(y)dy $ are infinitely differentiable. Moreover, their first and second order
  derivatives have the representations $(i,j,r=1,\ldots,n)$
  \begin{equation*}\bal
    \partial_i v_r(x)&= \int_{\setR^n} \partial_{x_i}V^{rl}(x-y) f_l(y)\ dy,\\
    \partial_{i}\pa_{j}v_r(x)&= \lim_{\epsilon \searrow 0}
    \int_{(B(x,\epsilon))^c} \partial_{x_i}\pa_{x_j}V^{rl}(x-y)f_l(y)\ dy \\
    &\hspace{1cm}+ \frac{2|B_1|}{n+2}\big(-(n+1)\delta_{ij}f_{r}(x) +
      \delta_{ir}f_{j}(x) + \delta_{rj}f_{i}(x) \big),\hspace{-10mm}\\[1mm] 
    \pa_i\pi(x)&= \lim_{\epsilon \searrow 0}
    \int_{(B(x,\epsilon))^c} \partial_{x_i}Q^l(x-y)f_l(y)\ dy +
    |B_1|f_i(x), 
\label{stdarst}
\eal\end{equation*}
and satisfy the estimates 
\begin{equation}\bal
  \norm{\nabla \bv}_{L^{\pp}(\setRR)} + \norm{\pi}_{\lpr}&\le c
  \,\|\ff\|_{D^{-1,\pp}(\setR^n)}\,,
  \\ 
  \normpr{\nabla^2 \bv} + \normpr{\nabla \pi}&\le c\, \normpr{\ff}\,,
\label{stabsch}
\eal\end{equation}
with a constant  $c=c(p,n)$.
\end{lemma}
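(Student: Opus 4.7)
The plan is to follow the classical Calderón-Zygmund strategy, with the variable-exponent Theorem~\ref{hsabschgr} as the workhorse. There are three stages: the pointwise representation formulas for $\nabla\bv$, $\nabla^2\bv$, $\pi$ and $\nabla\pi$; the second-order $L^{p(\cdot)}$ estimate; and the first-order estimate by duality.

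\emph{Representations.} Since $n\ge 3$, the kernels $V^{rl}(x)\sim|x|^{2-n}$, $Q^l(x)\sim|x|^{1-n}$ and $\partial_iV^{rl}(x)\sim|x|^{1-n}$ are all locally integrable at the origin, so for $\ff\in(C^\infty_{0,0}(\setRR))^n$ the convolutions $\bv$ and $\pi$ are smooth and a single differentiation under the integral sign yields the first-order formula. For $\partial_i\partial_j v_r$ and $\partial_i\pi$ the kernels have strength $|x|^{-n}$ and require the classical principal-value construction: split the integral over $B(x,\epsilon)^c$ and $B(x,\epsilon)$, differentiate in the exterior, and in the interior transfer one derivative onto the smooth $\ff$ via $\partial_{x_i}V^{rl}(x-y)=-\partial_{y_i}V^{rl}(x-y)$ together with an integration by parts. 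Letting $\epsilon\to 0$ produces the principal-value integral plus a boundary contribution on $\partial B(x,\epsilon)$, and computing the spherical integrals of $\partial_jV^{rl}$ and of $Q^l$ explicitly yields the free local terms displayed in~\eqref{stdarst}.

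\emph{Second-order estimate.} The kernels $\partial_i\partial_jV^{rl}$ and $\partial_iQ^l$ are smooth away from $0$, homogeneous of degree $-n$, and a direct computation (e.g.\ $\partial_i\partial_j|x|^{2-n}=|x|^{-n}(-(n-2)\delta_{ij}+n(n-2)\omega_i\omega_j)$) shows that their angular parts $P\in C^\infty(\partial B_1)$ have vanishing sphere average. Theorem~\ref{hsabschgr} with $r=\infty$ (so that $r'=1<p^-$) then gives the $L^{p(\cdot)}(\setRR)$-boundedness of the associated principal-value singular integrals; combined with the trivial pointwise bound on the free local terms this proves the second inequality in~\eqref{stabsch}.

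\emph{First-order estimate.} For $\Phi\in(C^\infty_0(\setRR))^{n\times n}$, Fubini plus one integration by parts shows that $\int_{\setRR}\partial_iv_r\,\Phi_{ir}\,dx=\langle\ff,\bu\rangle$, where $\bu:=\bV\ast(-\divergenz\Phi)$ (using the symmetry $V^{rl}=V^{lr}$ of the Oseen tensor). Since $\ff\in C^\infty_{0,0}(\setRR)$, Lemma~\ref{cor:C00-D-1} gives $\ff\in D^{-1,p(\cdot)}(\setRR)$, hence $|\langle\ff,\bu\rangle|\le\norm{\ff}_{D^{-1,p(\cdot)}(\setRR)}\,\norm{\nabla\bu}_{L^{p'(\cdot)}(\setRR)}$; writing $\partial_iu_r=-\partial_i\partial_kV^{rl}\ast\Phi_{kl}$ in the principal-value sense and applying the second-order CZ bound with dual exponent $p'$ yields $\norm{\nabla\bu}_{L^{p'(\cdot)}(\setRR)}\le c\,\norm{\Phi}_{L^{p'(\cdot)}(\setRR)}$. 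Taking the supremum over $\Phi$ via Theorem~\ref{ppdual} proves the estimate on $\nabla\bv$; an analogous argument with a scalar test $\psi$ (for which $\bu$ is replaced by $\bQ\ast\psi$) gives the bound on $\pi$. The main obstacle is the careful execution of the principal-value derivation in the first stage: correctly identifying the tensor coefficients of the free local terms, and verifying the vanishing sphere-average of the resulting CZ kernels, are the classical subtle points; the remainder is a routine application of Theorem~\ref{hsabschgr} and duality.
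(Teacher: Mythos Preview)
Your proposal is correct and follows essentially the same route as the paper: the representation formulas are obtained by the classical principal-value/integration-by-parts construction (the paper simply refers to Galdi here), the second-order estimate~\eqref{stabsch}$_2$ comes directly from Theorem~\ref{hsabschgr} applied to the smooth homogeneous kernels $\partial_i\partial_jV^{rl}$ and $\partial_iQ^l$, and the first-order estimate~\eqref{stabsch}$_1$ is obtained by a duality argument using the Calder\'on--Zygmund bound on the dual exponent $p'$. The only cosmetic difference is in the execution of the duality step: the paper tests $\partial_iv_r$ against a scalar $\phi\in C_0^\infty(B)$ on an arbitrary ball, uses Fubini and the first-order representation to produce $\Phi_l(y)=\int\partial_{x_i}V^{rl}(x-y)\phi(x)\,dx$, and bounds $\norm{\nabla\Phi_l}_{L^{p'(\cdot)}}$ directly via Theorem~\ref{hsabschgr}; you test against a matrix field $\Phi$, perform one integration by parts, and invoke the second-order CZ bound on $\nabla\bu$ with $\bu=\bV\ast(-\divergenz\Phi)$. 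Both variants produce the same $D^{-1,p(\cdot)}$ bound.
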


\begin{proof}
As in the classical theory (cf. \cite{Gal94}) we deduce the representations of the derivatives using integration by parts. These representations together with Theorem \ref{hsabschgr} immediately yield the estimate \eqref{stabsch}$_2$.

Using Theorem \ref{ppdual}, the density of $C_0^\infty(B)$ in $L^{p'(\cdot)}(B)$, the representation of $\nabla \bv$ and the Theorem of Fubini we may estimate the norm of the first order derivatives of $\bv$ on every Ball $B\subset\setR^n$ by
\beq\bal
\norm{\pa_i v_r}_{L^{\pp}(B)}&\le 2\sup_{\phi \in C_0^\infty(B), \atop \norm{\phi}_{L^{p'(\cdot)}(B)}\le 1} \int_B\phi\ \partial_i v_r\ dx\\
& = 2\sup_{\phi \in C_0^\infty(B), \atop \norm{\phi}_{L^{p'(\cdot)}(B)}\le 1} \int_{\setRR} f_l(y)\underbrace{\int_{\setR^n} \pa_{x_i}V^{rl}(x-y) \phi(x)\ dx}_{=:\Phi_l(y)} dy.
\eal\eeq
From the properties of $V$ and $\phi$ and from Theorem \ref{hsabschgr} we deduce the estimate
$$\norm{\nabla\Phi_l}_{L^{\pps}(\setRR)} \le c\norm{\phi}_{L^{\pps}(B)},$$
and thus
$$\norm{\pa_i v_r}_{L^{\pp}(B)}\le c\sup_{\bPhi \in (D^{1,p'(\cdot)}(\setRR))^n, \atop \norm{\nabla \bPhi}_{L^{p'(\cdot)}(\setRR)}\le 1} \int_{\setRR} \ff\cdot\bPhi\ dy
= c \|\ff\|_{D^{-1,\pp}(\setR^n)}$$
with a constant $c$ independent of $B$. The norm of $\pi$ may be dealt with in the same way. Hence we get \eqref{stabsch}$_1$.
\end{proof}

Note that from the representations of the derivatives it follows that the convolutions $\bv(x):=-\frac{1}{2|\pa B_1|}\int_{\setR^n}\bV(x-y)\ff(y)dy$ and $\pi(x):=-\frac{1}{|\pa B_1|}\int_{\setR^n}\bQ(x-y)\cdot\ff(y)dy$ solve the Stokes system in the whole-space, i.e. $\Delta \bv - \nabla \pi = \ff, \divergenz \bv = 0$ in $\setRR$.

Lemma \ref{stabl} and the density of $C^\infty
_{0,0}(\setRR)$ in $\lpr$ and $D^{-1,p(\cdot)}(\setRR)$ (cf. Lemma \ref{pro:C00-denseD-1}) show that the convolution with the kernel $\bV$ extends to a bounded operator $U$ from $(D^{-1,p(\cdot)}(\setRR))^n$ into $(D^{1,p(\cdot)}(\setRR))^n$ and from $(\lpr)^n$ into
$(D^{2,p(\cdot)}(\setRR))^n$. Similarly we see that the convolution with the kernel $\bQ$ extends to a bounded operator $P$ from $(D^{-1,p(\cdot)}(\setRR))^n$ into $L^{p(\cdot)}(\setRR)$ and from $(\lpr)^n$ into $D^{1,p(\cdot)}(\setRR)$. This proves the assertions (1) and (2) of the following theorem.

\begin{theorem}\label{cor:stabl} 
  Let $p \in \PPln(\setR^n)$ satisfy $1< p^- \le p^+ <\infty$ and $U,
  P$ be the operators defined above.
  \begin{enumerate}
  \item If $\ff\in (\lpr)^n$ then $U \ff \in (D^{2,p(\cdot)}(\setRR))^n$
    and $P\ff \in D^{1,p(\cdot)}(\setRR)$ satisfy the estimate
    \begin{equation*}
      \norm{U\ff}_{D^{2,p(\cdot)}(\setRR)} +
      \norm{P \ff }_{D^{1,p(\cdot)}(\setRR)} \le c\, \norm{\ff}_{L^{p(\cdot)}(\setRR)}
    \end{equation*}
    with a constant $c=c(p,n)$.
  \item If $\ff\in (D^{-1,p(\cdot)}(\setRR))^n$ then $U \ff \in
    (D^{1,p(\cdot)}(\setRR))^n$ and $P\ff \in L^{p(\cdot)}(\setRR)$
    satisfy the estimate
    \begin{equation*}
      \norm{U\ff}_{D^{1,p(\cdot)}(\setRR)} +
      \norm{P \ff }_{L^{p(\cdot)}(\setRR)} \le c\, \norm{\ff}_{D^{-1,p(\cdot)}(\setRR)}
    \end{equation*}
    with a constant $c=c(p,n)$.
  \item If $\ff\in(\lpr)^n$ has bounded support and vanishing mean value, hence $\ff\in (D^{-1,\pp}(\setRR))^n$, then
    $U\ff \in (D^{(1,2),p(\cdot)}(\setRR))^n\subset(D^{1,p(\cdot)}(\setRR))^n$ and $P\ff \in
    W^{1,p(\cdot)}(\setRR)\subset\lpr$ satisfy both of the above estimates simultaneously.
  \end{enumerate}
\end{theorem}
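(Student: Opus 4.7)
The paragraph immediately preceding the theorem already sketches how to obtain (1) and (2), and my plan on those is essentially to spell out that sketch: starting from an $\ff\in(\lpr)^n$, approximate by $\ff_k\in(C^\infty_{0,0}(\setRR))^n$ via Lemma~\ref{pro:C00-denseD-1}, and use the estimate \eqref{stabsch}$_2$ from Lemma~\ref{stabl} to conclude that $(U\ff_k)$ is Cauchy in $(D^{2,p(\cdot)}(\setRR))^n$ and $(P\ff_k)$ is Cauchy in $D^{1,p(\cdot)}(\setRR)$; defining $U\ff$, $P\ff$ as the limits and passing the estimate to the limit proves (1). Assertion (2) is analogous, replacing \eqref{stabsch}$_2$ by \eqref{stabsch}$_1$ and using the density of $C^\infty_{0,0}(\setRR)$ in $D^{-1,p(\cdot)}(\setRR)$, which is also part of Lemma~\ref{pro:C00-denseD-1}.

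The substantive content of the theorem is assertion~(3). The first step is to notice that, by assumption, $\ff\in(\lpr)^n$ has support in some bounded domain $A\subset\setRR$ (without loss of generality with Lipschitz boundary) and vanishing mean value, so that Lemma~\ref{cor:C00-D-1}, applied componentwise, gives $\ff\in (D^{-1,\pp}(\setRR))^n$ together with the estimate $\norm{\ff}_{D^{-1,\pp}(\setRR)}\le c\,\norm{\ff}_{\lpr}$. Thus both (1) and (2) apply to $\ff$ simultaneously, so both of the displayed estimates are at our disposal.

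What remains is to reconcile the two pieces of information into memberships in the \emph{intersection} spaces. This is where I expect the only real care to be needed: assertion (1) gives $U\ff$ as an element of $D^{2,p(\cdot)}(\setRR)$, i.e.\ modulo polynomials of degree $\le 1$, while (2) gives it as an element of $D^{1,p(\cdot)}(\setRR)$, modulo constants. To produce a common representative, the plan is to mollify $\ff$ to obtain $\ff_k\in(C^\infty_{0,0}(\setRR))^n$ with uniformly bounded supports and vanishing mean values (subtracting a small constant supported in a fixed ball after mollification if necessary); then $\ff_k\to\ff$ in $\lpr$, and by Lemma~\ref{cor:C00-D-1} also in $D^{-1,\pp}(\setRR)$. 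For such smooth $\ff_k$ the convolutions $U\ff_k$ and $P\ff_k$ are genuine pointwise functions, and Lemma~\ref{stabl} guarantees that all of $\nabla U\ff_k$, $\nabla^2 U\ff_k$, $P\ff_k$, and $\nabla P\ff_k$ are Cauchy in $\lpr$. After subtracting its mean value over a fixed bounded set, the sequence $U\ff_k$ converges to a representative of $U\ff$ with both $\nabla U\ff$ and $\nabla^2 U\ff$ in $\lpr$, hence $U\ff\in(D^{(1,2),p(\cdot)}(\setRR))^n$; for $P\ff$ no such normalisation is needed since $P\ff_k$ itself is Cauchy in $\lpr$, so $P\ff\in W^{1,p(\cdot)}(\setRR)$. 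The two estimates then pass to the limit without change, completing (3).
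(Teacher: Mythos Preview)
Your proposal is correct and follows essentially the same route as the paper. For (1) and (2) you spell out exactly the density argument the paper indicates before the theorem, and for (3) the paper does precisely what you do: take a single approximating sequence $(\ff_k)\subset (C^\infty_{0,0}(\setRR))^n$ converging to $\ff$ in $L^{p(\cdot)}$, use the estimate \eqref{homest} from Lemma~\ref{cor:C00-D-1} to see that it also converges in $D^{-1,p(\cdot)}$, and conclude that $(U\ff_k)$ and $(P\ff_k)$ are Cauchy in $D^{(1,2),p(\cdot)}$ and $W^{1,p(\cdot)}$ respectively. Your extra care about uniformly bounded supports and about reconciling representatives (modulo constants versus modulo affine functions) makes explicit a point the paper leaves implicit, but the underlying argument is identical.
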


\begin{proof}
It remains to show (3). We consider the operators $U$ and $P$ from assertion (2). Due to the estimate \eqref{homest} any $\ff$ as in (3) approximated by some sequence $(\ff_k)\subset (C^\infty _{0,0}(\setRR))^n$ in $(L^{p(\cdot)}(\setRR))^n$ is approximated by the same sequence in $(D^{-1,\pp}(\setRR))^n$. From (1) and (2) thus follows that $(P\ff_k)$ is a Cauchy sequence in $W^{1,p(\cdot)}(\setRR)$ while $(U\ff_k)$ is a Cauchy sequence in $(D^{(1,2),p(\cdot)})^n$.
\end{proof}

\begin{corollary}\label{stmaingr} 
  Let $p \in \PPln(\setR^n)$ satisfy $1< p^- \le p^+ <\infty$, and let
  $\ff\in(\lpr)^n$ and $g \in W^{1,\pp}(\setR^n)$ have bounded
  support. Moreover, let $\ff$ have vanishing mean value and let
  $(\bv,\pi)\in (W^{2,p^-}(\setR^n))^n\times W^{1,p^-}(\setR^n)$ be a
  solution of the Stokes system \eqref{eq:stokes1}. Then the first
  and second weak derivatives of $\bv$ as well as $\pi$ and its first
  weak derivatives belong to the space $\lpr$. They satisfy the
  estimates 
\begin{equation}\bal
  \norm{\nabla \bv}_{L^{\pp}(\setRR)} + \norm{\pi}_{\lpr}&\le c
  \left(\|\ff\|_{D^{-1,\pp}(\setR^n)} + \norm{g}_{\lpr} \right),\\ 
  \normpr{\nabla^2 \bv} + \normpr{\nabla \pi}&\le c \left(\normpr{\ff}
    + \normpr{\nabla g} \right), 
\label{stmaingrabsch}
\eal\end{equation}
with a constant   $c=c(p.n)$.
\end{corollary}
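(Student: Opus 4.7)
My plan is to construct an explicit solution $(\hat\bv,\hat\pi)$ of \eqref{eq:stokes1} that already satisfies \eqref{stmaingrabsch}, using the fundamental solutions of the Stokes system and the Laplacian, and then identify the relevant derivatives and the pressure of $(\hat\bv,\hat\pi)$ with those of the given $(\bv,\pi)$ by a Liouville argument in the classical space $L^{p^-}(\setRR)$.

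For the $\ff$-contribution I would use Theorem~\ref{cor:stabl}. Since $\ff\in(\lpr)^n$ has bounded support and vanishing mean value, Lemma~\ref{cor:C00-D-1}, applied with $A$ any ball containing the support of $\ff$, yields $\ff\in(D^{-1,\pp}(\setRR))^n$; hence assertion~(3) of Theorem~\ref{cor:stabl} applies and $\bv_1:=-\frac{1}{2|\pa B_1|}U\ff$, $\pi_1:=-\frac{1}{|\pa B_1|}P\ff$ solve $\Delta\bv_1-\nabla\pi_1=\ff$ and $\divergenz\bv_1=0$ with both the $D^{-1,\pp}$- and the $\lpr$-estimates of that theorem. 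For the $g$-contribution I would use the standard Newton-potential ansatz: let $N$ be the fundamental solution of the Laplacian, put $h:=N\ast g$, and set $\bv_2:=\nabla h$, $\pi_2:=g$. A direct computation gives $\divergenz\bv_2=\Delta h=g$ and $\Delta\bv_2-\nabla\pi_2=\nabla(\Delta h)-\nabla g=\mathbf 0$. The entries of $\nabla\bv_2=\nabla^2 h$ coincide, up to a bounded multiplier term in $g$, with Calder\'on--Zygmund singular integrals of the form covered by Theorem~\ref{hsabschgr}, which gives $\|\nabla\bv_2\|_{\lpr}\le c\|g\|_{\lpr}$; shifting one derivative onto $g$ by integration by parts likewise yields $\|\nabla^2\bv_2\|_{\lpr}\le c\|\nabla g\|_{\lpr}$. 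Combining the two constructions, $(\hat\bv,\hat\pi):=(\bv_1+\bv_2,\pi_1+\pi_2)$ solves \eqref{eq:stokes1} with data $(\ff,g)$ and satisfies both lines of \eqref{stmaingrabsch}.

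It remains to identify the quantities appearing in \eqref{stmaingrabsch} for $(\bv,\pi)$ with the corresponding ones for $(\hat\bv,\hat\pi)$. Since $\ff$ and $g$ have bounded support, $\ff\in L^{p^-}(\setRR)$ and $g\in W^{1,p^-}(\setRR)$; the classical constant-exponent Calder\'on--Zygmund theory (cf.~\cite{Gal94}), combined with the vanishing mean value of $\ff$ to obtain enough decay at infinity, places $\pi_1,\nabla\bv_1,\nabla^2\bv_1,\nabla\bv_2,\nabla^2\bv_2\in L^{p^-}(\setRR)$, and therefore $\hat\pi\in L^{p^-}(\setRR)$ and $\nabla\hat\bv\in L^{p^-}(\setRR)$. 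Setting $\bu:=\bv-\hat\bv$ and $q:=\pi-\hat\pi$, the pair $(\bu,q)$ distributionally solves the homogeneous Stokes system $\Delta\bu=\nabla q$, $\divergenz\bu=0$, with $q,\nabla\bu\in L^{p^-}(\setRR)$. Taking the divergence of the momentum equation shows $\Delta q=0$, so $q\equiv 0$ by Liouville's theorem for harmonic functions in $L^{p^-}(\setRR)$; then $\Delta\bu=\mathbf 0$, each $\pa_i u_j$ is harmonic and lies in $L^{p^-}(\setRR)$, and Liouville again forces $\nabla\bu\equiv\mathbf 0$. Hence $\nabla\bv=\nabla\hat\bv$, $\nabla^2\bv=\nabla^2\hat\bv$, $\pi=\hat\pi$ and $\nabla\pi=\nabla\hat\pi$, so the estimates \eqref{stmaingrabsch} transfer from $(\hat\bv,\hat\pi)$ to $(\bv,\pi)$. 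The main obstacle in this scheme is precisely this Liouville step: the variable-exponent estimates by themselves do not give uniqueness on $\setRR$, and one must carefully invoke the classical $L^{p^-}$-theory to place the constructed $(\hat\bv,\hat\pi)$ in the same Sobolev setting as the hypothesised $(\bv,\pi)$ so that the homogeneous whole-space problem forces the difference to be trivial.
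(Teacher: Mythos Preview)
Your proposal is correct and follows essentially the same strategy as the paper: construct an explicit solution via the Stokes fundamental solutions for the $\ff$-part and the Newton potential for the $g$-part, verify the variable-exponent estimates by Theorem~\ref{cor:stabl} and Theorem~\ref{hsabschgr}, and then identify it with the given $(\bv,\pi)$ through uniqueness in the classical $L^{p^-}$-setting. The only cosmetic differences are that the paper packages the $g$-contribution as $\bh:=-L(\nabla g)$ and applies $U,P$ to $\bF=\ff-\Delta\bh$ instead of treating $\ff$ and $g$ in parallel (your $(\bv_2,\pi_2)=(\nabla(N\ast g),g)$ is exactly the paper's correction term), and that the paper cites \cite[Theorem~IV.2.2]{Gal94} for the uniqueness step where you spell out the underlying Liouville argument.
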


\begin{proof}
From Section \ref{sec:poisson problem} we know that the function $$\bh:=-L(\nabla g) \in ((D^{(1,2),p(\cdot)})^n$$ satisfies the estimates
\begin{equation}\bal
\normpr{\nabla \bh} &\le c \normpr{g},\\
\normpr{\nabla^2 \bh} &\le c \normpr{\nabla g},
\label{habsch}
\eal\end{equation}
and the identities $\Delta \bh = \nabla g$ und $\divergenz \bh=g$ in $\setR^n$. Note that $\nabla g$ has vanishing mean value and $\|\nabla g\|_{D^{-1,\pp}(\setR^n)}\le \norm{g}_{\lpr}$. 

Set $\bF:=\ff - \Delta \bh$. The remark following Lemma \ref{stabl} and a density argument imply that $$\tilde{\bu}:=-\frac{1}{2|\pa B_1|}U\bF$$ and $$\tilde{\pi}:=-\frac{1}{|\pa B_1|}P\bF$$ solve the Stokes system in the whole-space, i.e. $\Delta \tilde{\bu} -\nabla \tilde{\pi} =\bF, \divergenz \tilde{\bu} = 0$ in $\setR^n$.
Setting $\tilde{\bv}:=\tilde{\bu} + \bh$ we conclude that
\beq\bal
\Delta \tilde{\bv} -\nabla \tilde{\pi} = \Delta \tilde{\bu} + \Delta \bh -\nabla \tilde{\pi} = \nabla \tilde{\pi} + \bF +\Delta \bh - \nabla \tilde{\pi} &= \ff,\\
\divergenz \tilde{\bv} &= g,
\eal\eeq
in $\setRR$. From theorem \ref{cor:stabl} and \eqref{habsch} we easily deduce the estimates \eqref{stmaingrabsch} for $\bv$ and $\pi$ replaced by $\tilde{\bv}$ and $\tilde{\pi}$.

Since $\ff$ and $g$ have bounded support analogous estimates hold with $\pp$ replaced by $p^-$. Using these estimates, the fact that solutions
$(\bv, \pi) \in D^{1,q}(\setRR) \times L^{q}(\setRR)$, $1<q<\infty$,
of the Stokes system \eqref{eq:stokes1} are unique up to a constant
(cf. \cite[Theorem IV.2.2]{Gal94}), and the integrability of $\pi $
and $\widetilde \pi $ we obtain \eqref{stmaingrabsch}. 
\end{proof}

Now we are ready to prove interior estimates for solutions of the
Stokes system.

\begin{theorem}\label{sttrafogr}  
  Let $p \in \PPln(\Omega)$ satisfy $1< p^- \le p^+ <\infty$, and let
  $\ff\in(L^{\pp}(\Omega))^n$ and $g \in W^{1,\pp}(\Omega)$. Let 
  $\Omega_0,\Omega_1 \subset \Omega$ be open sets with  $\Omega_0
  \subset\subset\Omega_1 \subset\subset\Omega$. Moreover, let 
  $(\bv,\pi)\in (W^{2,\pp}(\Omega))^n\times W^{1,\pp}(\Omega)$ be a
  solution of the Stokes system \eqref{eq:stokes}$_{1,2}$. Then there
  exists a constant $c=c(p,\Omega_0,\Omega_1)$ such that $(\bv,
  \pi)$ satisfy the estimates 
\begin{equation*}\bal
  &\norm{\nabla \bv}_{L^{\pp}(\Omega_0)} + \norm{\pi}_{L^{\pp}(\Omega_0)}\\
  &\hspace{5mm}\le c \big(\|\ff\|_{W^{-1,\pp}(\Omega_1)} + \norm{g}_{L^{\pp}(\Omega_1)} +
  \norm{\bv}_{L^{\pp}(\Omega_1 \setminus \Omega_0)} +
  \norm{\pi}_{W^{-1,\pp}(\Omega_1 \setminus \Omega_0)} \big),\\ 
  &\norm{\nabla^2 \bv}_{L^{\pp}(\Omega_0)} + \norm{\nabla \pi}_{L^{\pp}(\Omega_0)}\\
  &\hspace{5mm}\le c \big(\norm{\ff}_{L^{\pp}(\Omega_1)} + \norm{g}_{W^{1,\pp}(\Omega_1)} +
  \norm{\bv}_{W^{1,\pp}(\Omega_1 \setminus \Omega_0)} +
  \norm{\pi}_{L^{\pp}(\Omega_1 \setminus \Omega_0)}\big). 
\eal\end{equation*}
\end{theorem}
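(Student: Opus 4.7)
My plan is the classical localisation argument, reducing the interior estimate to the whole-space result Corollary~\ref{stmaingr}. To leave myself enough room in the subsequent duality step, I insert intermediate open sets $\Omega_0\subset\subset\Omega_0'\subset\subset\Omega_1'\subset\subset\Omega_1$ and choose a cutoff $\eta\in C_0^\infty(\Omega_1')$ with $\eta\equiv 1$ on $\Omega_0'$, so that $\operatorname{supp}\nabla\eta\subset\Omega_1'\setminus\overline{\Omega_0'}$. Setting $\bu:=\eta\bv$ and $\sigma:=\eta\pi$ (extended by zero) and using $\Delta\bv-\nabla\pi=\ff$, $\divergenz\bv=g$ on $\Omega$, a direct computation gives $\Delta\bu-\nabla\sigma=\bF$ and $\divergenz\bu=G$ on $\setRR$, with
\begin{equation*}
\bF=\eta\ff+2(\nabla\eta\cdot\nabla)\bv+(\Delta\eta)\bv-\pi\nabla\eta,\qquad G=\eta g+\nabla\eta\cdot\bv.
\end{equation*}
Both $\bF$ and $G$ are supported in $\overline{\Omega_1'}$, and since $\bu$ has compact support the divergence theorem yields $\int_{\setRR}\bF=\int_{\setRR}(\Delta\bu-\nabla\sigma)=0$, so the vanishing-mean hypothesis of Corollary~\ref{stmaingr} holds automatically.

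Corollary~\ref{stmaingr} applied to $(\bu,\sigma)$ then provides both whole-space estimates, and since $\bu=\bv$, $\sigma=\pi$ on $\Omega_0$, their left-hand sides dominate the quantities I wish to bound. The second inequality of the theorem follows immediately from a product-rule expansion: every term in $\bF$ and $\nabla G$ containing $\nabla\eta$ or $\nabla^2\eta$ is supported in $\Omega_1\setminus\Omega_0$, so $\norm{\bF}_{L^{\pp}(\setRR)}+\norm{\nabla G}_{L^{\pp}(\setRR)}$ is bounded by $c(\eta)\bigl(\norm{\ff}_{L^{\pp}(\Omega_1)}+\norm{g}_{W^{1,\pp}(\Omega_1)}+\norm{\bv}_{W^{1,\pp}(\Omega_1\setminus\Omega_0)}+\norm{\pi}_{L^{\pp}(\Omega_1\setminus\Omega_0)}\bigr)$, which is exactly the right-hand side required.

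The first inequality demands more care, since I must estimate $\|\bF\|_{D^{-1,\pp}(\setRR)}$ by the weaker norms $\norm{\ff}_{W^{-1,\pp}(\Omega_1)}$ and $\norm{\pi}_{W^{-1,\pp}(\Omega_1\setminus\Omega_0)}$. I dualise: given $\bphi\in D^{1,p'(\cdot)}(\setRR)$, I fix a ball $B\supset\overline{\Omega_1'}$ and, exploiting $\int_{\setRR}\bF=0$, replace $\bphi$ by the representative whose mean over $B$ vanishes; Theorem~\ref{pro:D=W} then gives $\norm{\bphi}_{L^{p'(\cdot)}(B)}\le c\norm{\nabla\bphi}_{L^{p'(\cdot)}(\setRR)}$. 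I next estimate $\langle\bF,\bphi\rangle$ term by term: the contribution of $\eta\ff$ equals $\langle\ff,\eta\bphi\rangle$ with $\eta\bphi\in W^{1,p'(\cdot)}_0(\Omega_1)$, yielding the factor $\norm{\ff}_{W^{-1,\pp}(\Omega_1)}$; the two terms containing $\nabla\bv$ are integrated by parts to move the derivatives onto $\eta$ and $\bphi$, producing $\bv$ only in $L^{\pp}(\Omega_1\setminus\Omega_0)$; finally the pressure term is written as $\langle\pi,\nabla\eta\cdot\bphi\rangle$, where $\operatorname{supp}\nabla\eta\subset\Omega_1'\setminus\overline{\Omega_0'}\subset\subset\Omega_1\setminus\overline{\Omega_0}$ guarantees $\nabla\eta\cdot\bphi\in W^{1,p'(\cdot)}_0(\Omega_1\setminus\overline{\Omega_0})$, so this contributes precisely $\norm{\pi}_{W^{-1,\pp}(\Omega_1\setminus\Omega_0)}$ times an $L^{p'(\cdot)}$-norm of $\nabla\bphi$. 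Together with the immediate bound $\norm{G}_{L^{\pp}(\setRR)}\le c(\eta)(\norm{g}_{L^{\pp}(\Omega_1)}+\norm{\bv}_{L^{\pp}(\Omega_1\setminus\Omega_0)})$, this completes the proof.

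The main technical obstacle is precisely this last duality step, and especially the pressure term. It is what forces the introduction of the auxiliary layer $\Omega_0'\subset\subset\Omega_1'$: without the strict inclusion of $\operatorname{supp}\nabla\eta$ into the open shell $\Omega_1\setminus\overline{\Omega_0}$, the test function $\nabla\eta\cdot\bphi$ would fail to lie in $W^{1,p'(\cdot)}_0(\Omega_1\setminus\overline{\Omega_0})$ and hence could not legally be paired with $\pi\in W^{-1,\pp}(\Omega_1\setminus\Omega_0)$.
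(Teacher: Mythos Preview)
Your argument is correct and follows the same localisation strategy as the paper: multiply by a cutoff, apply Corollary~\ref{stmaingr} to the resulting compactly supported whole-space system, and then estimate the commutator term $\bF$ in $D^{-1,\pp}(\setRR)$ by duality, integrating by parts to shift derivatives off $\bv$ and using the vanishing mean of $\bF$ together with the Poincar\'e inequality to control the test function. The one refinement you add is the intermediate layer $\Omega_0'\subset\subset\Omega_1'$, which guarantees $\operatorname{supp}\nabla\eta\subset\subset\Omega_1\setminus\overline{\Omega_0}$ and hence that $\nabla\eta\cdot\bphi$ genuinely lies in $W^{1,p'(\cdot)}_0(\Omega_1\setminus\overline{\Omega_0})$; the paper's proof is terser on this point and simply pairs $\pi$ with $\bPsi\cdot\nabla\tau$ in $W^{-1,\pp}(\Omega_1\setminus\Omega_0)$ without isolating the support issue explicitly.
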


\begin{proof}
  Let $\tau\in C^\infty(\setRR)$ with $\tau=1$ in $\Omega_0$ und
  $\support(\tau)\subset\subset\Omega_1$. For $\bar{\bv}:=\bv\tau$ and
  $\bar \pi := \pi \tau$ we have 
  \beq\bal 
  \Delta \bar{\bv} - \nabla{\bar{\pi}}& =2\nabla\bv\nabla\tau +
  \Delta\tau\bv -
  \pi\nabla\tau + \ff\tau=:\bfT,\\
  \divergenz\bar{\bv} &= \bv \cdot\nabla\tau + g\tau =:G,
  \eal\eeq
in $\setRR$. Integrating by parts we obtain $$\int_{\setRR} \bT\ dx =  \int_{\setRR} \left(-\Delta \bv + \nabla \pi + \ff\right)\phi\ dx= 0.$$ Hence we see that $\bar \bv \in
  (W^{2,\pp}(\setRR))^n$, $\bar{\pi} \in W^{1,\pp}(\setRR)$, $G\in
  W^{1,\pp}(\setRR)$ and $\bfT \in (L^{\pp}(\setRR))^n$ satisfy the
  assumptions of corollary \ref{stmaingr} which yields
\begin{equation*}\hspace*{-1mm}\bal
  \norm{\nabla \bv}_{L^{\pp}(\Omega_0)} + \norm{\pi}_{L^{\pp}(\Omega_0)}&\le c
  \left(\|\bfT\|_{D^{-1,\pp}(\setR^n)} + \norm{G}_{L^{\pp}(\Omega_1)} \right),\\ 
  \norm{\nabla^2 \bv}_{L^{\pp}(\Omega_0)} + \norm{\nabla
    \pi}_{L^{\pp}(\Omega_0)}&\le c \left(\norm{\bfT}_{L^{\pp}(\Omega_1)} 
    + \norm{\nabla G}_{L^{\pp}(\Omega_1)} \right).
\eal\end{equation*}
The last estimate immediately yields the second estimate of the theorem. To prove the first one we use
\beq\bal
\|\bfT\|_{D^{-1,\pp}(\setR^n)} &= \sup_{\bPsi \in D^{1,\pps}_0(\setRR), \atop \norm{\nabla\bPsi}_{p'(\cdot),\setR^n}\le 1} \int_{\setRR} \left( -\bv \cdot \bPsi \Delta \tau
-2 \bv \cdot\nabla\bPsi\nabla\tau - \pi\nabla\tau\cdot\bPsi + \ff\cdot\bPsi\tau\right)\ dx\\
&\le c \sup_{\bPsi \in D^{1,\pps}_0(\setRR), \atop \norm{\nabla\bPsi}_{p'(\cdot),\setR^n}\le 1} \Big(\norm{\bv}_{L^{\pp}(\Omega_1\setminus\Omega_0)}\norm{\bPsi\Delta\tau}_{L^{\pps}(\Omega_1\setminus\Omega_0)}\\
&\hspace{3cm}+ \norm{\bv}_{L^{\pp}(\Omega_1\setminus\Omega_0)}\norm{\nabla\bPsi\nabla\tau}_{L^{\pps}(\Omega_1\setminus\Omega_0)}\\
&\hspace{3cm}+ \norm{\pi}_{W^{-1,\pp}(\Omega_1\setminus\Omega_0)}\norm{\bPsi\cdot\nabla\tau}_{W^{1,\pps}(\Omega_1\setminus\Omega_0)}\\
&\hspace{3cm}+ \norm{\ff}_{W^{-1,\pp}(\Omega_1)}\norm{\bPsi\tau}_{W^{1,\pps}(\Omega_1)}\Big).
\eal\eeq
We integrated by parts to derive the equality. Afterwards we restricted the domains of integration appropriately and used H�lder's inequality. Since $\bT$ has vanishing mean value we may assume that $\bPsi$ has vanishing mean value in $\Omega_1$. Hence, using theorem \ref{pro:D=W} we may estimate the $\pps$-norm of the cut-off $\bPsi$ in $\Omega_1$ by $\norm{\nabla\bPsi}_{\pps,\setRR}$. This finishes the proof.
\end{proof}

Now we turn our attention to the \emph{Stokes system in the
  half-space} 
\begin{equation}\label{eq:stokes2}
\begin{aligned}
\Delta \bv - \nabla \pi &=\ff & &\mbox{in } \setR^n_>,\\
\divergenz \bv &=g & &\mbox{in } \setR^n_>,\\
\bv&= \mathbf 0  & &\mbox{on }\Sigma.
\end{aligned}
\end{equation}
In order to derive estimates for this problem we reflect the data in an even
manner and, by a convolution with the fundamental solutions of
the Stokes system, produce a whole-space solution $\widetilde\bv$ of \eqref{eq:stokes1}.
This solution does not satisfy the homogeneous boundary condition $\widetilde\bv
=\mathbf 0$ on $\Sigma$. To achieve this we add to
$\widetilde\bv$ a solution of the problem in the half-space
\begin{equation}\bal
\Delta \bw -\nabla \nu &= \mathbf 0 & &\mbox{ in } \setR^n_>,\\
\divergenz \bw &= 0 & &\mbox{ in } \setR^n_>,\\
\bw &= \bfh & &\mbox{ on } \Sigma,
\label{randsystem}
\eal\end{equation}
with the special choice $\bfh =-\widetilde\bv|_\Sigma$. In order to obtain appropriate
estimates of solutions of \eqref{randsystem} we need Theorem \ref{hsabschhr}. The solutions of
this problem are obtained as usual by a convolution of the normal derivatives of the fundamental solutions 
in the half-space, namely $\bfZ=(Z^{rl})_{r,l=1,\ldots, n}$ and $(\pa_l z)_{l=1,\ldots, n}$ with
\begin{align*}
  Z^{rl}(x) &:= \frac{2}{|B_1|}\frac{x_nx_rx_l}{|x|^{n+2}},\\
\intertext{and}
 &z(x):=-\frac{4}{|\pa B_1|} \frac{x_n}{|x|^n},
\end{align*}
with the boundary data $\bfh$ from \eqref{randsystem}. From the
classical theory it is well known that the kernels $Z^{rl}$ and $z$
satisfy the assumptions of Theorem \ref{hsabschhr}. Thus we obtain:

\begin{theorem}\label{randsatz}
  Let $p \in \PPln(\setRR)$ satisfy $1< p^- \le p^+ <\infty$, and let $\bfh \in
  ( D^{(1,2),p(\cdot)}(\setRR))^n$.  Then there exists a solution
$$
  (\bw,\nu)\in (D^{(1,2),p(\cdot)}(\setRR_>))^n\times
  W^{1,p(\cdot)}(\setRR_>)
$$ 
of the Stokes system in the half-space \eqref{randsystem} with
boundary data $\bfh|_\Sigma$ which satisfies the estimates
\begin{equation}\begin{aligned}
  \norm{\nabla \bw}_{L^{p(\cdot)}(\setRR_>)} + \norm{\nu}_{L^{p(\cdot)}(\setRR_>)} &\le
  c\,\norm{\nabla \bfh}_{L^{p(\cdot)}(\setRR_>)},\\ 
  \norm{\nabla^2 \bw}_{L^{p(\cdot)}(\setRR_>)} + \norm{\nabla \nu}_{L^{p(\cdot)}(\setRR_>)} &\le
  c\, \norm{\nabla^2 \bfh}_{L^{p(\cdot)}(\setRR_>)},
  \label{randabsch}
\end{aligned}\end{equation}
with a constant $c=c(p,n)$. 
\end{theorem}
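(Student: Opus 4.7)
My plan is to construct $(\bw,\nu)$ as a boundary layer potential and to extract both lines of~\eqref{randabsch} directly from Theorem~\ref{hsabschhr}. For $\bfh$ smooth and compactly supported I would set
\begin{align*}
  w_r(x) &:= \int_\Sigma Z^{rl}(x'-y',x_n)\, h_l(y',0)\, dy',\\
  \nu(x) &:= \int_\Sigma (\pa_l z)(x'-y',x_n)\, h_l(y',0)\, dy'.
\end{align*}
From the classical half-space Stokes theory (cf.~\cite{Gal94}) this pair solves~\eqref{randsystem} with $\bw|_\Sigma = \bfh|_\Sigma$, and each of the kernels $Z^{rl}$ and $\pa_l z$ is of the form $P(x/|x|)/|x|^n$ with $P$ continuous on $S_\ge$, with $\int_{S^{n-2}}P(x',0)\,d\omega' = 0$, and admitting the interior regularity on $S_>$ required by Theorem~\ref{hsabschhr}.

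The first estimate in~\eqref{randabsch} is then immediate: Theorem~\ref{hsabschhr} applied to each kernel and summed over $l$ yields
$$
  \norm{\nabla\bw}_{L^{\pp}(\setRR_>)}+\norm{\nu}_{L^{\pp}(\setRR_>)}\le c\,\norm{\nabla\bfh}_{L^{\pp}(\setRR_>)}.
$$
For the second line I would reduce to the first via tangential differentiation. For $\alpha\in\{1,\ldots,n-1\}$, integrating by parts in $y_\alpha$ under the integral gives
$$
  \pa_\alpha w_r(x) = \int_\Sigma Z^{rl}(x'-y',x_n)\,\pa_\alpha h_l(y',0)\,dy',
$$
and analogously for $\pa_\alpha\nu$, so $(\pa_\alpha\bw,\pa_\alpha\nu)$ has the same representation as $(\bw,\nu)$ with boundary datum $\pa_\alpha\bfh$. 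The first estimate applied to $\pa_\alpha\bfh$ therefore controls $\pa_\alpha\nabla\bw$ and $\pa_\alpha\nu$ in $L^{\pp}(\setRR_>)$ by $\norm{\nabla^2\bfh}_{L^{\pp}(\setRR_>)}$. The remaining pure normal contributions $\pa_n^2\bw$ and $\pa_n\nu$ I would recover from the equations themselves: $\divergenz\bw=0$ gives $\pa_n w_n$ as a tangential derivative combination, and the momentum equations $\pa_n^2 w_r=\pa_r\nu-\sum_{\alpha<n}\pa_\alpha^2 w_r$, applied first with $r=n$ and then for $r<n$, deliver $\pa_n\nu$ and the last $\pa_n^2 w_r$ in terms of quantities already bounded by $\norm{\nabla^2\bfh}_{L^{\pp}(\setRR_>)}$.

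The step I expect to require the most care is not the application of Theorem~\ref{hsabschhr}, which is the workhorse, but the passage from smooth data to general $\bfh \in D^{(1,2),p(\cdot)}(\setRR)$. One has to approximate $\bfh$ by $C^\infty_0$-functions via Lemma~\ref{pro:dense-D1-D2}, observe that the construction $\bfh\mapsto(\bw,\nu)$ descends to equivalence classes modulo constants (a constant added to $\bfh$ merely shifts $\bw$ by the same constant and leaves $\nu$ untouched), and then pass to the limit using the two estimates just derived to obtain the asserted solution in $(D^{(1,2),p(\cdot)}(\setRR_>))^n\times W^{1,p(\cdot)}(\setRR_>)$ of~\eqref{randsystem}.
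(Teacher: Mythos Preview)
Your strategy coincides with the paper's: boundary-layer potentials, Theorem~\ref{hsabschhr} for the first-order estimate, tangential differentiation followed by the equations themselves for the second-order estimate, and density (Lemma~\ref{pro:dense-D1-D2}) to pass to general $\bfh$. The verification that $(\bw,\nu)$ solves~\eqref{randsystem} with the correct trace, which you delegate to~\cite{Gal94}, is carried out explicitly in the paper, but this is not a substantive difference.

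There is one point in the pressure estimate that needs tightening. Theorem~\ref{hsabschhr} bounds $\nabla Hf$, never $Hf$ itself, so invoking it with the kernels $\pa_l z$ would control $\nabla\nu$ rather than $\nu$; moreover $\pa_n z(x',0)=-4/|\pa B_1|$ on $S^{n-2}$, so the cancellation hypothesis $\int_{S^{n-2}}P(x',0)\,d\omega'=0$ fails for $l=n$ and the theorem does not even apply to that kernel. The paper's device is to apply Theorem~\ref{hsabschhr} with the kernel $z$ to the auxiliary potentials $\nu_i(x):=\int_\Sigma z(x'-y',x_n)\,h_i(y',0)\,dy'$, obtaining $\norm{\nabla\nu_i}_{L^{\pp}(\setRR_>)}\le c\,\norm{\nabla h_i}_{L^{\pp}(\setRR_>)}$; since $\nu=\sum_i\pa_i\nu_i$ this immediately yields the $L^{\pp}$ bound on $\nu$. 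The same trick with $\nu_{ik}:=\int_\Sigma z(x'-y',x_n)\,\pa_k h_i(y',0)\,dy'$ handles $\pa_k\nu$ for $k<n$ in the second estimate.
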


\begin{proof}
  Since $\tfr$ is dense in $D^{(1,2),\pp}(\setRR )$ it suffices to show the existence of a linear solution operator defined on $\tfr$ and satisfying the estimates \eqref{randabsch}. Thus let $\bfh \in (\tfr)^n$. Then the functions $\bw:\setR^n_>\rightarrow\setR^n$ and
  $s:\setR^n_>\rightarrow \setR$ defined by
  \beq\bal
    \bw(x)&:=\int_\Sigma \bfZ(x'-y',x_n)\bfh(y',0)\ dy',\\
    \nu(x)&:=\sum_{i=1}^n \pa _i\underbrace{ \int_\Sigma
      z(x'-y',x_n)h_i(y',0)\ dy'}_{=:\nu_i(x)}, 
  \eal\eeq 
  are smooth solutions of \eqref{randsystem}. Indeed, it is easy to see that $\bw$ and $\nu$ are infinitely differentiable und that the derivatives may be written as
\begin{equation}\bal
\pa^\alpha\bw(x)=\int_\Sigma \pa^\alpha_x \bZ(x-(y',0))\ff(y')\ dy',\\
\pa^\alpha \nu(x)=\int_\Sigma\pa^\alpha_x\pa_{x_i} z(x-(y',0))f_i(y')\ dy'.
\label{wsabl}
\eal\end{equation}
Using these representations and the fact that $\Delta Z^{rl}-\pa^2_{rl}z= 0$ and $\pa_r Z^{rl}= 0$ in $\setRR\setminus\{0\}$ we conclude that $\bw$ and $\nu$ solve the Stokes system \eqref{randsystem}$_{1,2}$.
In order to show that the boundary values are met continuously we assume that $x\in\setR^n_>$ and $\epsilon>0$. It is not hard to see that
$$\int_{B^{n-1}_\epsilon(x')}Z^{rl}(x-(y',0))\ dy' = \delta_{rl} + o(1)$$
for $x_n \searrow 0$. Thereby we get
\beq\bal
w_r(x)-f_r(x') &= \int_{B^{n-1}_\epsilon(x')}Z^{rl}(x-(y',0))f_l(y')\ dy' - f_r(x')\\
&\hspace{0.5cm} + \int_{\Sigma\setminus B^{n-1}_\epsilon(x')}Z^{rl}(x-(y',0))f_l(y')\ dy'\\
&= \int_{B^{n-1}_\epsilon(x')}Z^{rl}(x-(y',0))(f_l(y')-f_l(x'))\ dy' + o(1)
\eal\eeq
for $x_n\searrow 0$, since for the second summand we get by using H\"older's inequality
\beq\bal
\Big|\int_{\Sigma\setminus B^{n-1}_\epsilon(x')}Z^{rl}(x-(y',0))f_l(y')\ dy'\Big| &\le x_n\ c \int_{\Sigma\setminus B^{n-1}_\epsilon(x')}\frac{|f_l(y')|}{|x'-y'|^{n}}\ dy'\\
& = x_n c(\epsilon) \in o(1) \mbox{ f�r } x_n\searrow 0.
\eal\eeq
Thus we may conclude that
\beq\bal
|w&_r(x)-f_r(x')|\\
&\le \underbrace{\sup_{y'\in B^{n-1}_\epsilon(x')}|\ff(y')-\ff(x')|}_{\le \delta,\mbox{ if $\epsilon$ is suff. small}}\sum_l \int_{B^{n-1}_\epsilon(x')}|Z^{rl}(x-(y',0))|\ dy' + o(1)\\
&\le c \delta + o(1)
\eal\eeq
for $x_n\searrow 0$ and arbitrary $\delta>0$. It is not hard to see that the constant $c$ is independent of $x_n$. Hence we get
$$\limsup_{x_n\searrow 0} |w_r(x)-f_r(x')| \le c\delta.$$
This finishes the proof that $\bw$ and $\nu$ are smooth solutions.

The estimate \eqref{randabsch}$_1$ now follows from Theorem \ref{hsabschhr} applied to $\bw$ and $\nu_i$,
  $i=1,\ldots, n$. In order to prove \eqref{randabsch}$_2$ we notice that for $1\le k<n$ we have
  \beq\bal
    \pa_k\bw(x)&=\int_\Sigma \bfZ(x'-y',x_n)\pa_k\bfh(y',0)\ dy',\\
    \pa_k \nu(x)&= \pa_i\underbrace{\int_\Sigma z(x'-y',x_n)\pa_kh_i(y',0)\
      dy'}_{=:\nu_{ik}(x)}.  
  \eal\eeq 
  Again Theorem \ref{hsabschhr} applied to $\pa _k \bw$ and $\nu_{ik}$
  gives
  $$
  \norm{\pa_k \nabla\bw}_{L^{\pp}(\setR^n_>)} + \norm{\pa_k
    \nu}_{L^{\pp}(\setR^n_>)}\le c \,\norm{\nabla^2\bfh}_{L^{\pp}(\setR^n_>)}.
  $$ 
  Using the equations \eqref{randsystem}$_{1,2}$ we compute $\pa_n^2
  w_n = - \sum_{i<n}\pa^2_{ni} w_i $, $\pa_n^2 w_i = \pa_i \nu -
  \sum_{j<n} \pa_j^2 w_i$, $1\le i <n$, and $\pa_n \nu= \Delta w_n$. These identities together with the last estimate give the estimate for $\pa _n^2 \bw$ and $\pa _n \nu$. This finishes the proof of the theorem. 
\end{proof}

Using Corollary \ref{stmaingr} and the previous
theorem we get half-space estimates for the Stokes system
\eqref{eq:stokes2}. 

\begin{corollary}\label{stabschhr}
  Let $p \in \PPln(\setRR_>)$ satisfy $1< p^- \le p^+ <\infty$, and let
  $\ff \in (L^{\pp}(\setR^n_>))^n$ and $g \in W^{1,\pp}(\setR^n_>)$
  have bounded support. Let $(\bv,\pi)$ $\in (W^{2,p^-}(\setR^n_>))^n
  \times W^{1,p^-}(\setR^n_>)$ be a solution of the Stokes system in
  the half-space \eqref{eq:stokes2} corresponding to the data $\ff$
  and $g$. Then the first and the second weak derivatives of $\bv$ as well as $\pi$ and its first derivatives belong to the space $L^{\pp}(\setRR_>)$. They satify the estimates
\begin{equation}\bal
  \norm{\nabla \bv}_{L^{\pp}(\setR^n_>)} +
  \norm{\pi}_{L^{\pp}(\setR^n_>)}&\le c 
  \left(\|\ff\|_{D^{-1,\pp}(\setR^n_>)} +
    \norm{g}_{L^{\pp}(\setR^n_>)} \right),
  \\ 
  \norm{\nabla^2 \bv}_{L^{\pp}(\setR^n_>)} + \norm{\nabla
    \pi}_{L^{\pp}(\setR^n_>)}&\le c \left(\|\ff\|_{L^{\pp}(\setR^n_>)}
    + \norm{\nabla g}_{L^{\pp}(\setR^n_>)} \right), 
\label{stabschhrabsch}
\eal\end{equation}
with a constant $c=c(p,n)$.
\end{corollary}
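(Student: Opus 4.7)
The plan is the classical two-step reduction combined with uniqueness: first solve the whole-space Stokes problem for a suitable reflection of the data, then correct the nonzero boundary trace with Theorem~\ref{randsatz}, and finally identify the constructed solution with $(\bv,\pi)$ by classical $L^{p^-}$-uniqueness.

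\emph{Whole-space step.} I would reflect $\ff,g$ across $\Sigma$ to extensions $\widetilde\ff\in(L^{\pp}(\setR^n))^n$ and $\widetilde g\in W^{1,\pp}(\setR^n)$ of bounded support (even reflection of the tangential components of $\ff$ and of $g$, odd reflection of the normal component of $\ff$), and then modify $\widetilde\ff$ by a bump supported in $\setR^n_<$ with a suitable integral so that $\int_{\setR^n}\widetilde\ff\,dx=0$. Using Lemma~\ref{lem:C0-D-1} one checks
\begin{equation*}
\|\widetilde\ff\|_{L^{\pp}(\setR^n)}\le c\|\ff\|_{L^{\pp}(\setR^n_>)},\qquad \|\widetilde\ff\|_{D^{-1,\pp}(\setR^n)}\le c\|\ff\|_{D^{-1,\pp}(\setR^n_>)},
\end{equation*}
and analogous bounds for $\widetilde g$ and $\nabla\widetilde g$. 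Corollary~\ref{stmaingr} then produces a whole-space solution $(\widetilde\bv,\widetilde\pi)$ of~\eqref{eq:stokes1} for the extended data, satisfying~\eqref{stmaingrabsch}; in particular $\widetilde\bv\in(D^{(1,2),\pp}(\setR^n))^n$.

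\emph{Boundary correction and identification.} Applying Theorem~\ref{randsatz} with $\bfh:=-\widetilde\bv$ produces $(\bw,\nu)\in(D^{(1,2),\pp}(\setR^n_>))^n\times W^{1,\pp}(\setR^n_>)$ that solves~\eqref{randsystem} with $\bw|_\Sigma=-\widetilde\bv|_\Sigma$ and obeys~\eqref{randabsch}. Setting $\widehat\bv:=\widetilde\bv|_{\setR^n_>}+\bw$ and $\widehat\pi:=\widetilde\pi|_{\setR^n_>}+\nu$, the pair $(\widehat\bv,\widehat\pi)$ is a solution of~\eqref{eq:stokes2} for the original data $\ff,g$, and adding the estimates from Corollary~\ref{stmaingr} and Theorem~\ref{randsatz} already yields~\eqref{stabschhrabsch} for $(\widehat\bv,\widehat\pi)$. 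The classical uniqueness result \cite[Theorem~IV.2.2]{Gal94} for the Stokes system in the half-space, valid in $D^{1,p^-}\times L^{p^-}$, applies to the given $W^{2,p^-}\times W^{1,p^-}$-solution $(\bv,\pi)$ and forces $\widehat\bv=\bv$ and $\widehat\pi=\pi+\mathrm{const}$, so that the estimates transfer to $(\bv,\pi)$.

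\emph{Main obstacle.} The delicate point lies entirely in the extension step: one must ensure vanishing mean value of $\widetilde\ff$ (required for the $D^{-1,\pp}(\setR^n)$-estimate in Corollary~\ref{stmaingr}) while preserving the inequality $\|\widetilde\ff\|_{D^{-1,\pp}(\setR^n)}\le c\|\ff\|_{D^{-1,\pp}(\setR^n_>)}$. This is handled by placing the mean-value-correcting bump on $\setR^n_<$ so that it does not disturb the data on $\setR^n_>$ and estimating it via Lemma~\ref{lem:C0-D-1}. Once this bookkeeping is settled, the final bound~\eqref{stabschhrabsch} arises by just summing the estimates of Corollary~\ref{stmaingr} and Theorem~\ref{randsatz}.
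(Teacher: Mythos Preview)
Your overall architecture---whole-space solution from reflected data, boundary correction via Theorem~\ref{randsatz}, then identification by classical uniqueness---is exactly the paper's. The divergence is in the reflection step, and there your proposed fix for the ``main obstacle'' does not go through.

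The paper simply extends \emph{all} components of $\ff$ by odd reflection (and $p,g$ by even reflection). Oddness forces $\int_{\setR^n}\ff=0$ automatically, so no bump is needed, and the inequality $\|\ff\|_{D^{-1,\pp}(\setR^n)}\le c\,\|\ff\|_{D^{-1,\pp}(\setR^n_>)}$ follows by splitting each test function $\bPhi\in C_0^\infty(\setR^n)$ into its even and odd parts and noting that the odd part, restricted to $\setR^n_>$, lies in $D_0^{1,p'(\cdot)}(\setR^n_>)$.

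Your mixed reflection (tangential even, normal odd) does \emph{not} yield mean zero, so you introduce a correcting bump $\mathbf{b}$ in $\setR^n_<$. But then you need $\|\widetilde\ff\|_{D^{-1,\pp}(\setR^n)}\le c\,\|\ff\|_{D^{-1,\pp}(\setR^n_>)}$ with $c=c(p,n)$, and Lemma~\ref{lem:C0-D-1} cannot deliver this. The bump alone has nonzero mean and hence is not even an element of $D^{-1,\pp}(\setR^n)$ (test with cutoffs $\eta(\cdot/k)$: their gradients tend to zero in $L^{p'(\cdot)}$ while the pairing tends to $\int\mathbf b\ne0$), so you cannot estimate it separately. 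Estimating the sum directly would require controlling $\bigl|\int_{\setR^n_>} f_j\bigr|$, $j<n$, by $\|\ff\|_{D^{-1,\pp}(\setR^n_>)}$ uniformly in the support of $\ff$, which is false (take $\ff=\mathbf e_1\chi_{B_R}$ and let $R\to\infty$). The same non-uniformity also affects $\|\widetilde\ff\|_{L^{\pp}(\setR^n)}$ in the second estimate.

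So the gap is real but easily repaired: replace your mixed reflection by the paper's full odd reflection of $\ff$; then the obstacle disappears and the rest of your argument is correct. One minor point: the half-space uniqueness you invoke is \cite[Theorem~IV.3.3]{Gal94}, not IV.2.2 (which is the whole-space statement).
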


\begin{proof} Note that $\ff\in D^{-1,p(\cdot)}(\setRR_>)$ by Lemma \ref{lem:C0-D-1}. We extend $p$ and $g$ by an even reflection, and $\ff$ by an odd
  reflection to $\setR^n$. Thus $p\in \PPln(\setR^n)$, $\ff \in
  L^{\pp}(\setRR)$ and $g \in W^{1,\pp}(\setR^n)$ with corresponding
  estimates of the whole-space norms by the half-space norms.
  Moreover, $\ff$ has vanishing mean value, and $\ff $ and $g$ still have bounded support. We now construct the whole-space solution $(\widetilde\bv,\widetilde\pi)$ corresponding to this data in the same way as in the proof of Corollary \ref{stmaingr}. Thus we get
  \begin{equation}\bal\label{absch11}
    \norm{\nabla \widetilde\bv}_{L^{\pp}(\setR^n_>)} +
    \norm{\widetilde\pi}_{L^{\pp}(\setR^n_>)}&\le c 
    \left(\|\ff\|_{D^{-1,\pp}(\setR^n_>)} + \norm{g}_{L^{\pp}(\setR^n_>)}
    \right),\\ 
    \norm{\nabla^2 \widetilde\bv}_{L^{\pp}(\setR^n_>)} + \norm{\nabla
      \widetilde\pi}_{L^{\pp}(\setR^n_>)}&\le c 
    \left(\|\ff\|_{L^{\pp}(\setR^n_>)} + \norm{\nabla
        g}_{L^{\pp}(\setR^n_>)} \right),  
    \eal\end{equation}
where we used that $\|\ff\|_{D^{-1,\pp}(\setR^n)}\le \|\ff\|_{D^{-1,\pp}(\setR^n_>)}$. This estimate may be shown in the following way. Every function $\phi$ in $\setRR$ can be split into an even and an odd part:
$$\phi(x) = \frac{\phi(x)+\phi(\tilde{x})}{2} + \frac{\phi(x)-\phi(\tilde{x})}{2} =: \phi_e(x) + \phi_o(x).$$ Since $\ff$ ist odd, we have 
$\int_{\setRR}f_l\phi=\int_{\setRR}f_l\phi_o$.
Hence we get
\beq\bal
\norm{\ff}_{D^{-1,p(\cdot)}(\setRR)} &=  \sup_{\Phi \in \tfr), \atop \norm{\nabla\Phi}_{L^{p'(\cdot)}(\setRR)}\le 1} \int_{\setRR}\ff\cdot\Phi\ dx
=\sup_{\Phi \in \tfr, \atop \norm{\nabla\Phi}_{L^{p'(\cdot)}(\setRR)}\le 1} \int_{\setRR}\ff\cdot\Phi_o\ dx\\
&\le \sup_{\Phi\in\tfr, \textnormal{ odd}, \atop \norm{\nabla\Phi}_{L^{p'(\cdot)}(\setRR)}\le 1} \int_{\setRR}\ff\cdot\Phi\ dx = 2 \sup_{\Phi\in\tfr, \textnormal{ odd}, \atop \norm{\nabla\Phi}_{L^{p'(\cdot)}(\setRR)}\le 1} \int_{\setR^n_>}\ff\cdot\Phi\ dx\\
&\le \sup_{\Phi \in C_0^\infty(\setR^n_>), \atop \norm{\nabla\Phi}_{L^{p'(\cdot)}(\setRR_>)}\le 1} \int_{\setR^n_>}\ff\cdot\Phi\ dx = \norm{\ff}_{D^{-1,\pp}(\setR^n_>)}.
\eal\eeq
The second inequality holds because every odd function $\phi\in\tfr$ is in $D_0^{1,p'(\cdot)}(\setRR_>)$ and $\norm{\nabla\phi}_{p'(\cdot),\setRR}\le 2\norm{\nabla\phi}_{p'(\cdot),\setRR_>}$.

Theorem \ref{randsatz} yields the existence of a solution  $(\bw,\nu)$ of the Stokes system in the half-space \eqref{randsystem} with boundary data  $-\widetilde \bv|_\Sigma$ satisfying the estimates 
  \begin{equation*}\bal
    \norm{\nabla \bw}_{L^{\pp}(\setRR_>)} + \norm{\nu}_{L^{\pp}(\setRR_>)} &\le
    c\,\norm{\nabla \widetilde \bv}_{L^{\pp}(\setRR_>)},\\ 
    \norm{\nabla^2 \bw}_{L^{\pp}(\setRR_>)} + \norm{\nabla \nu}_{L^{\pp}(\setRR_>)} &\le
    c\,\norm{\nabla^2 \widetilde \bv}_{L^{\pp}(\setRR_>)}.
    \eal\end{equation*}
  These estimates together with \eqref{absch11} imply that $\bar \bv := \widetilde \bv +
  \bw $  and $\bar \pi  :=\widetilde \pi +\nu$ satisfy the estimates
  \eqref{stabschhrabsch} and solve the problem
  \beq\bal 
  \Delta \bar {\bv} = \Delta \widetilde \bv + \Delta \bw &= \ff + \nabla
  \bar {\pi} & &\mbox{in } \setR^n_>,\\ 
  \divergenz \bar {\bv} &= g & &\mbox{in } \setR^n_>,\\
  \bar {\bv} = \widetilde \bv -\widetilde \bv &=\mathbf 0 & &\mbox{on } \Sigma.
  \eal\eeq
  If we replace $\pp$ by $p^-$  in the above arguments we get that
  $\bar \bv$ and $\bar \pi$ also satisfy the corresponding estimates
  \eqref{stabschhrabsch} with $\pp$ replaced by $p^-$. Using the classical uniqueness result \cite[Theorem IV.3.3]{Gal94} we deduce that also $\bv $ and $\pi$ satisfy the estimates \eqref{stabschhrabsch}.  
\end{proof}

Now we are ready to prove estimates near the boundary for solutions of the
Stokes system \eqref{eq:stokes} with homogeneous boundary data provided that the boundary is of class $C^{1,1}$.

\begin{definition}
 We say that a domain $\Omega\subset\setRR$ has a $C^{1,1}$-boundary if for every boundary point $\bar{x}\in\partial\Omega$ there is a rotation and translation $G$ of $\setRR$ and a function $a\in C^{1,1}$ such that $G(0)=\bar{x}$, $a(0)=\nabla
  a(0)=0$ and 
  \beq\bal
	\Lambda:=G(\{(x',x_n)\in\setRR | |x'| <\alpha , a(x') = x_n
  \})&\subset \partial \Omega,\\
	V:=G(\{(x',x_n)\in\setRR | |x'|
  <\alpha , a(x') < x_n < a(x') + \beta \})&\subset \Omega,\\
	V_-:=G(\{(x',x_n)\in\setRR | |x'| <\alpha , a(x') -\beta < x_n <
  a(x') \})&\subset \setR^n\setminus \overline\Omega,
  \eal\eeq
for some $\alpha,\beta > 0$.
\end{definition}
Note that in fact the assumption $a(0)=\nabla a(0)=0$ is not a restriction since this simply means that we describe the boundary as the graph of function defined on the tangential space.

\begin{theorem}\label{sttrafohr}
  Let $p \in \PPln(\Omega)$ satisfy $1< p^- \le p^+ <\infty$, and let
  $\ff\in(L^{\pp}(\Omega))^n$ and $g \in W^{1,\pp}(\Omega)$.  Let
  $(\bv,\pi)\in (W^{2,\pp}(\Omega))^n\times W^{1,\pp}(\Omega)$ be a
  solution of the Stokes system \eqref{eq:stokes} with $\bv_0=\mathbf
  0$. Moreover we fix a boundary point $\bar{x}\in\partial\Omega$ and consider the corresponding set $V$ from the previous definition. Moreover define $V'$ analogously to $V$ with $\alpha,\beta$ replaced by $\alpha',\beta'$ where $0 < \alpha ' <\alpha, 0 < \beta ' < \beta$.  If $\alpha$ is chosen sufficiently small
  there exists a constant $c=c(p,V,V',\Omega)$ such that
  $(\bv, \pi)$ satisfy the estimates
\begin{equation}\bal
  &\norm{\nabla \bv}_{L^{\pp}(V')} + \norm{\pi}_{L^{\pp}(V')}\\
  &\hspace{7mm}\le c \big(\|\ff\|_{W^{-1,\pp}(V)} + \norm{g}_{L^{\pp}(V)} +
  \norm{\bv}_{L^{\pp}(V)} + \norm{\pi}_{W^{-1,\pp}(V)} \big),\hspace*{-2mm}\\ 
  &\norm{\nabla^2 \bv}_{L^{\pp}(V')} + \norm{\nabla \pi}_{L^{\pp}(V')}\\
  &\hspace{7mm}\le c \big(\norm{\ff}_{L^{\pp}(V)} + \norm{g}_{W^{1,\pp}(V)} +
  \norm{\bv}_{W^{1,\pp}(V)} +   \norm{\pi}_{L^{\pp}(V)}\big). 
\label{eq:sttrafohr}
\eal\end{equation}
\end{theorem}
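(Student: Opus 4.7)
The plan is to follow the template of the interior estimate (Theorem \ref{sttrafogr}), replacing the whole-space tool Corollary \ref{stmaingr} by its half-space counterpart Corollary \ref{stabschhr}, and preceding the localisation by a $C^{1,1}$-diffeomorphism that straightens the boundary near $\bar x$. Using the chart $G$ and the function $a$ from the definition of $C^{1,1}$-boundary, I would construct $\Phi\colon \hat V\to V$ by $\Phi(\hat x',\hat x_n):=G(\hat x', a(\hat x')+\hat x_n)$, where $\hat V:=\{\hat x\in\setR^n_>\colon |\hat x'|<\alpha,\ \hat x_n<\beta\}$, with $\hat V'$ defined analogously with $\alpha',\beta'$. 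Because $a\in C^{1,1}$ and $a(0)=\nabla a(0)=0$, the Jacobian $D\Phi$ agrees with the identity at the origin and differs from it in $L^\infty(\hat V)$ by at most $c\alpha$. The pullbacks $\hat\bv:=\bv\circ\Phi$ and $\hat\pi:=\pi\circ\Phi$ then solve, in $\hat V$, a perturbed Stokes system $\Delta\hat\bv-\nabla\hat\pi=\hat\ff+\mathcal R_1(\hat\bv,\hat\pi)$, $\divergenz\hat\bv=\hat g+\mathcal R_2(\hat\bv)$, with $\hat\bv=\mathbf 0$ on $\Sigma\cap\overline{\hat V}$, where $\mathcal R_1,\mathcal R_2$ are second- and first-order differential operators whose coefficients are Lipschitz and vanish at $\hat x=0$, and hence are of size $O(\alpha)$ in $L^\infty(\hat V)$.

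Next I would localise as in the proof of Theorem \ref{sttrafogr}. Pick $\tau\in C^\infty(\setRR)$ with $\tau\equiv 1$ on $\hat V'$ and $\support(\tau)\subset\subset \hat V\cup(\Sigma\cap\overline{\hat V})$, and set $\bar\bv:=\tau\hat\bv$, $\bar\pi:=\tau\hat\pi$. Extending by zero to $\setR^n_>$, the pair satisfies $\bar\bv=\mathbf 0$ on $\Sigma$ and solves $\Delta\bar\bv-\nabla\bar\pi=\bar\bfT$, $\divergenz\bar\bv=\bar G$ in $\setR^n_>$, with $\bar\bfT:=\tau\hat\ff+\tau\mathcal R_1+2\nabla\hat\bv\nabla\tau+\Delta\tau\,\hat\bv-\hat\pi\nabla\tau$ and $\bar G:=\tau\hat g+\tau\mathcal R_2+\hat\bv\cdot\nabla\tau$. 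These data have bounded support, $\bar\bfT\in (L^{\pp}(\setR^n_>))^n$, $\bar G\in W^{1,\pp}(\setR^n_>)$, and by Lemma \ref{lem:C0-D-1} also $\bar\bfT\in (D^{-1,\pp}(\setR^n_>))^n$.

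Corollary \ref{stabschhr} applied to $(\bar\bv,\bar\pi)$ bounds $\|\nabla^2\bar\bv\|_{L^\pp}+\|\nabla\bar\pi\|_{L^\pp}$ by $\|\bar\bfT\|_{L^\pp}+\|\nabla\bar G\|_{L^\pp}$ and, in parallel, $\|\nabla\bar\bv\|_{L^\pp}+\|\bar\pi\|_{L^\pp}$ by $\|\bar\bfT\|_{D^{-1,\pp}(\setR^n_>)}+\|\bar G\|_{L^\pp}$. The latter $D^{-1,\pp}$-norm is dualised against $\bPsi\in D^{1,\pps}_0(\setR^n_>)$, integrated by parts, and estimated exactly as in the proof of Theorem \ref{sttrafogr} using the \Poincare\ inequality of Theorem \ref{pro:D=W} on $\bPsi$ restricted to $\hat V$; this produces the lower-order norms $\|\ff\|_{W^{-1,\pp}(V)}$, $\|g\|_{L^\pp(V)}$, $\|\bv\|_{L^\pp(V)}$, and $\|\pi\|_{W^{-1,\pp}(V)}$ on the right. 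The contributions from $\mathcal R_1,\mathcal R_2$ are at most $c\alpha(\|\nabla^2\hat\bv\|_{L^\pp(\hat V)}+\|\nabla\hat\pi\|_{L^\pp(\hat V)})$ plus lower-order terms; choosing $\alpha$ small these absorb into the left-hand side. Pulling back via $\Phi^{-1}$, using that $C^{1,1}$-diffeomorphisms induce equivalent $W^{k,\pp}$-norms for $k\in\{0,1,2\}$ and that $\hat p:=p\circ\Phi\in\PPln(\hat V)$ because $\Phi$ is bi-Lipschitz, finally yields \eqref{eq:sttrafohr}.

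The central obstacle is the absorption step: the $O(\alpha)$ perturbation produced by flattening the boundary must be dominated by the constant of Corollary \ref{stabschhr}, which forces $\alpha$ to be chosen small in terms of $p$ and $n$. That is permitted since the theorem allows the constant to depend on $V,V',\Omega$. A subsidiary technicality is keeping the $D^{-1,\pp}$-estimate of $\bar\bfT$ honest, which one arranges by shifting the vanishing-mean-value requirement onto the test function $\bPsi$ via the \Poincare\ inequality rather than onto $\bar\bfT$ itself, exactly as in the proof of Theorem \ref{sttrafogr}.
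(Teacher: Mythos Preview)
Your proposal is essentially correct and follows the paper's route: flatten the boundary via $y=(x',x_n-a(x'))$, localise with a cutoff supported in $\hat V$ and equal to $1$ on $\hat V'$, apply the half-space estimate Corollary~\ref{stabschhr} to the resulting system, and absorb the highest-order perturbation terms coming from the chart (those with coefficients $\partial_i a$, hence of size $O(\alpha)$) into the left-hand side by choosing $\alpha$ small.

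One point needs adjustment. For the dual estimate of $\|\bar\bfT\|_{D^{-1,\pp}(\setR^n_>)}$ you invoke the mean-value \Poincare\ inequality of Theorem~\ref{pro:D=W} ``exactly as in the proof of Theorem~\ref{sttrafogr}''. That interior argument worked because the cutoff had compact support in $\Omega$, so integration by parts gave $\int\bfT=0$ and one could subtract $\bPsi_{\Omega_1}$ from the test function. Near the boundary the cutoff meets $\Sigma$, the integration by parts produces boundary terms, and $\bar\bfT$ has no reason to have zero mean; Theorem~\ref{pro:D=W} is therefore not directly available. The paper instead uses the half-space \Poincare\ inequality underlying Lemma~\ref{lem:C0-D-1} (estimate~\eqref{homest2}): since every test function $\bPsi\in C_0^\infty(\setR^n_>)$ vanishes on $\Sigma$, one has $\|\bPsi\|_{L^{\pps}(\hat V)}\le c\,\|\nabla\bPsi\|_{L^{\pps}(\setR^n_>)}$ with no mean-value hypothesis on $\bar\bfT$. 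You already cite Lemma~\ref{lem:C0-D-1} to place $\bar\bfT$ in $D^{-1,\pp}(\setR^n_>)$, so the fix is immediate: just replace the appeal to Theorem~\ref{pro:D=W} by an appeal to~\eqref{homest2} when bounding the pairings $\int \hat\bv\cdot\partial_i(B_i\bPsi)$, $\int\hat\pi\,\Psi_jE_j$, etc.
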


\begin{proof}
   For simplicity we assume that the possible rotation and translation
  is not present, i.e.~$G=Id$. We define $V''$ analogously to $V'$ with
  $0<\alpha'<\alpha''<\alpha$ and $ 0<\beta'<\beta''<\beta$. Let
  $\tau\in C^\infty(\overline{\Omega})$ satisfy $\tau = 1$ in $V'$ and
  $\tau = 0$ outside of $V''$. Let us straighten the boundary with the
  help of the coordinate transformation $\bfF: V \rightarrow
  \bfF(V)=:\widehat{V} \subset\setRR$, where
  $(y',y_n):=\bfF(x',x_n):=(x',x_n-a(x'))$ (cf.~Figure \ref{Horst}).

\begin{figure}[h!]
\centering
\input{Horst.pstex_t}
\caption{}     
\label{Horst}
\end{figure}

  We set $\widehat{V}':=\bfF(V')$, $\widehat{\tau}:=\tau\circ
  \bfF^{-1}$, and analogously $\widehat{\bv},
  \widehat{\pi}, \widehat{\ff}$, $\widehat{g}$ and $\widehat{p}$. Note that $\widehat{p}\in \PPln(\widehat{V})$. Furthermore we define $\bar{\bv}
  :=\widehat{\bv} \widehat{\tau} \in
  (W^{2,\widehat{p}(\cdot)}(\setR^n_>))^n $ and $\bar{\pi}:=
  \widehat{\pi}\widehat{\tau} \in
  W^{1,\widehat{p}(\cdot)}(\setR^n_>)$. The integrabilities may be seen by using the identity
\begin{equation}
\norm{\wf}_{L^{\widehat{p}(\cdot)}(\wv)}=\norm{f}_{L^{\pp}(V)}.
\label{normtrafo}
\end{equation}
A tedious but simple computation shows that the couple $(\bar\bv,\bar\pi)$ solves the Stokes system in the half-space \eqref{eq:stokes2} with data $(\bfT,G)$ defined by
  \beq\bal
  T_j  &:= \wh\tau\ \wh f_j  + A_i \pa^2_{in} \bar v_j + B_i\pa_i \wh v_j + C \wh v_j  + D_j \pa_n \bar\pi + E_j \wh\pi,\\
  G &:= \wh\tau\ \wh g + R_i \wh v_i + S_i \pa_n \bar v_i,
  \eal\eeq
  where $B_i,C,E_j,R_i$ are bounded functions depending on $\tau$, $a$ and their derivatives of up to second order. Note that $a\in W^{2,\infty}$. Moreover $A_i,D_j$ and $S_i$ are scalings of first order derivatives of $a$, and thus can be made arbitrarily small by reducing $\alpha$. Hence we will be able to absorb the corresponding terms in the left hand sides of the estimates we are about to derive. Since $(\bfT,G)\in (L^{\widehat{p}(\cdot)}(\setR^n_>))^n\times
  W^{1,\widehat{p}(\cdot)}(\setR^n_>)$ have bounded support
  we can use Corollary \ref{stabschhr} to obtain 
  \beq\bal 
    \norm{\nabla^2\bar{\bv}}_{L^{\wh{p}(\cdot)}(\wv)} &+
    \norm{\nabla\bar{\pi}}_{L^{\wh{p}(\cdot)}(\wv)}\\ 
    & \le c \left(\norm{\wh{\ff}}_{L^{\wh{p}(\cdot)}(\wv)} + 
      \norm{\wh{g}}_{W^{1,\wh{p}(\cdot)}(\wv)} +
      \norm{\wh{\bv}}_{W^{1,\wh{p}(\cdot)}(\wv)} +
      \norm{\wh{\pi}}_{L^{\wh{p}(\cdot)}(\wv)}\right) \\
    & \hspace{1cm} + \frac{1}{2}\left(\norm{\nabla^2\bar{\bv}}_{L^{\bar{p}(\cdot)}(\wv)} + \norm{\nabla\bar{\pi}}_{L^{\wh{p}(\cdot)}(\wv)}\right).
  \eal\eeq
Absorbing into the left hand side we get
\beq\bal 
\norm{\nabla^2\wh{\bv}}_{L^{\wh{p}(\cdot)}(\wv')} &+
    \norm{\nabla\wh{\pi}}_{L^{\wh{p}(\cdot)}(\wv')} \le    
\norm{\nabla^2\bar{\bv}}_{L^{\wh{p}(\cdot)}(\wv)} +
    \norm{\nabla\bar{\pi}}_{L^{\wh{p}(\cdot)}(\wv)}\\ 
    & \le c \left(\norm{\wh{\ff}}_{L^{\wh{p}(\cdot)}(\wv)} + 
      \norm{\wh{g}}_{W^{1,\wh{p}(\cdot)}(\wv)} +
      \norm{\wh{\bv}}_{W^{1,\wh{p}(\cdot)}(\wv)} +
      \norm{\wh{\pi}}_{L^{\wh{p}(\cdot)}(\wv)}\right).
\eal\eeq
  Transforming back via $\bF^{-1}$ we derive the estimate \eqref{eq:sttrafohr}$_2$. Corollary \ref{stabschhr} also gives 
\beq\bal
    \norm{\nabla\bar{\bv}}_{L^{\wh{p}(\cdot)}(\wv)}+ 
    \norm{\bar{\pi}}_{L^{\wh{p}(\cdot)}(\wv)} \le c\, \big (\|\bfT\|_{D^{-1,\wh
      p(\cdot)}(\setRR_>)} + \norm{G}_{L^{\wh{p}(\cdot)}(\wv)}\big ). 
  \eal\eeq
We estimate
\beq\bal
\|\bfT\|&_{D^{-1,\wh
      p(\cdot)}(\setRR_>)}\\
&= \sup_{\bPsi \in C_0^\infty(\setR^n_>), \atop \norm{\nabla\bPsi}_{\wh p'(\cdot),\setR^n_>}\le 1}
\int_{\wv} \Big(\wh\tau\ \wh{\ff}\cdot\bPsi - A_i\pa_i\bar{\bv}\cdot\pa_n \bPsi - \wh{\bv}\cdot \pa_i (B_i\bPsi)\\
&\hspace{3cm} + C \wh{\bv}\cdot\bPsi - \bar{\pi}D_j\pa_n\Psi_j + \wh \pi \Psi_jE_j \Big)\ dx\\
&\le \sup_{\bPsi \in C_0^\infty(\setR^n_>), \atop \norm{\nabla\bPsi}_{\wh p'(\cdot),\setR^n}\le 1} \int_{V} \ff\cdot(\bPsi\circ \bF \tau) + \pi (\Psi_jE_j)\circ \bF\ dx + c\norm{\wh \bv}_{L^{\wh{p}(\cdot)}(\wv)}\\ 
&\hspace{1cm} + \frac{1}{2}\left( \norm{\nabla\bar{\bv}}_{L^{\wh{p}(\cdot)}(\wv)} + \norm{\bar{\pi}}_{L^{\wh{p}(\cdot)}(\wv)}\right)\\
&\le c \left(\norm{\ff}_{W^{-1,\pp}(V)} + \norm{\pi}_{W^{-1,p(\cdot)}(V)} + \norm{\wh \bv}_{L^{\wh{p}(\cdot)}(\wv)}\right) \\
&\hspace{1cm} + \frac{1}{2}\left( \norm{\nabla\bar{\bv}}_{L^{\wh{p}(\cdot)}(\wv)} + \norm{\bar{\pi}}_{L^{\wh{p}(\cdot)}(\wv)}\right)
\eal\eeq
The equality is derived by integrating by parts  the second, third and fifth summand. Note that the functions $A_i$ and $D_j$ do not depend on the last variable. For the first inequality we used H\"older's inequality, estimate \eqref{homest2} and again the fact that $A_i$ and $D_j$ can be made arbitrarily small by reducing $\alpha$. To derive the second inequality note that $\bPsi\circ \bF \tau$ and $(\Psi_jE_j)\circ \bF$ are in $W_0^{1,\pps}(V)$ and that by estimate \eqref{homest2}
$$\norm{\bPsi\circ \bF \tau}_{W^{1,\pps}(V)} + \norm{(\Psi_jE_j)\circ \bF}_{W^{1,\pps}(V)}\le c \norm{\nabla \bPsi}_{L^{\wh p'(\cdot)}(\setR^n_>)}.$$
Furthermore we have
\beq\bal
\norm{G}_{\wh{p}(\cdot),\wv} \le \norm{\wh g}_{\wh{p}(\cdot),\wv} + c\norm{\wh \bv}_{\wh{p}(\cdot),\wv} + \frac{1}{2}\norm{\nabla \bar{\bv}}_{\wh{p}(\cdot),\wv}.
\eal\eeq
Now, proceeding as in the derivation of \eqref{eq:sttrafohr}$_2$ we get \eqref{eq:sttrafohr}$_1$.
\end{proof}

Now we can finally prove the main assertions of this paper.
\begin{proof} (of the Theorems \ref{thm:stokes-strong} and \ref{thm:stokes-weak}) Due to \cite[Theorem IV.6.1]{Gal94} for $\ff\in \tfo$ und $g\in C^\infty(\overline{\Omega})$ with $\int_{\Omega}g=0$ there is a unique strong solution $(\bv,\pi)\in W^{2,\pp}(\Omega)\times W^{1,\pp}(\Omega)$ of the Stokes system \eqref{eq:stokes} with data $\ff$, $g$ and homogenous boundary condition. For each boundary point $\bar{x}\in\partial\Omega$ we may choose sets $V$ and $V'$ like in Theorem \ref{sttrafohr} and analogously defined sets $\Lambda'$ and $V'_-$. Then, the sets $W':=V'\cup\Lambda'\cup V'_-$ form an open cover of the boundary. Since $\partial\Omega$ is compact, we may choose a finite subcover $W_i, i=1,\ldots,m$. Finally we may choose open sets $\Omega_0,\Omega_1\subset\Omega$ such that $\Omega_0 \subset\subset \Omega_1 \subset\subset \Omega$ and $\Omega = \Omega_0 \cup \bigcup_{i=1}^m V_i'$. Now, using the Theorems \ref{sttrafogr} and \ref{sttrafohr} we conclude that
\begin{equation}\bal
&\normpo{\nabla\bv} + \normpo{\pi}\\
&\le \Bignorm{\nabla \bv\Big(\chi_{\Omega_0} + \sum_i \chi_{V_i'}\Big)}_{L^{\pp}(\Omega)} + \Bignorm{\pi \Big(\chi_{\Omega_0} + \sum_i \chi_{V_i'}\Big)}_{L^{\pp}(\Omega)}\\\label{gl}
&\le \norm{\nabla \bv}_{L^{\pp}(\Omega_0)} + \sum_i\norm{\nabla \bv}_{L^\pp(V_i')} + \norm{\pi}_{L^\pp(\Omega_0)} + \sum_i\norm{\pi}_{L^{\pp}(V_i')}\\
&\le c\Big(\|\ff\|_{W^{-1,\pp}(\Omega_1)} + \norm{g}_{L^{\pp}(\Omega_1)} +
  \norm{\bv}_{L^{\pp}(\Omega_1 \setminus \Omega_0)} +
  \norm{\pi}_{W^{-1,\pp}(\Omega_1 \setminus \Omega_0)}\\  &+ \sum_i \|\ff\|_{W^{-1,\pp}(V_i)} + \norm{g}_{L^{\pp}(V_i)} +
  \norm{\bv}_{L^{\pp}(V_i)} + \norm{\pi}_{W^{-1,\pp}(V_i)} \Big)\\
&\le c\Big( \normnegpo{\ff} + \normpo{g} + \normpo{\bv} + \norm{\pi}_{W^{-1,\pp}(V_i)}\Big),
\eal\end{equation}
The last two summands may be eliminated as the subsequent lemma shows. Hence we get the estimate
\begin{equation}\bal
\normepo{\bv} + \normpo{\pi} \le c\left( \norm{\ff}_{W^{-1,\pp}(\Omega)} + \normpo{g}\right).
\label{unglzwei}
\eal\end{equation}
Analogously one shows
\beq\bal
\norm{\nabla^2\bv}_{L^{\pp}(\Omega)} &+ \norm{\nabla\pi}_{L^{\pp}(\Omega)}\\
&\le c \Big(\norm{\ff}_{L^{\pp}(\Omega)} + \norm{g}_{W^{1,\pp}(\Omega)} + \norm{\bv}_{W^{1,\pp}(\Omega)} + \norm{\pi}_{L^{\pp}(\Omega)}\Big).
\eal\eeq
Using \eqref{unglzwei} we conclude that
\begin{equation}\bal
\norm{\bv}_{W^{2,\pp}(\Omega)} + \norm{\pi}_{W^{1,\pp}(\Omega)} &\le c \Big(\norm{\ff}_{L^{\pp}(\Omega)} + \norm{g}_{W^{1,\pp}(\Omega)}\Big).
\label{ungldrei}
\eal\end{equation}
Due to the estimates \eqref{unglzwei} and \eqref{ungldrei} we may continuously extend the linear solution operator to $\lpo\times\wepo$ and $W^{-1,\pp}(\Omega)\times\lpo$, respectively. It is easy to see that these extensions map to strong and weak solutions of the Stokes system \eqref{eq:stokes} with homogenous boundary conditions, respectively. Uniqueness is implied by $\wepo \embedding W^{1,p^-}(\Omega)$ and \cite[Theorem IV.6.1]{Gal94}. Hence the subsequent lemma finishes the proof.
\end{proof}

\begin{lemma} Let $p \in \PPln(\Omega)$ satisfy $1< p^- \le p^+ <\infty$, and let $(\bv,\pi) \in (\wzpo)^n \times \wepo$ a strong solution of the Stokes system \eqref{eq:stokes} with data $\ff\in\lpo$, $g\in\wepo$ and homogenous boundary condition. Then we have the estimate
$$\normpo{\bv} + \normnegpo{\pi} \le c(\normnegpo{\ff} + \normpo{g})$$
with a constant $c=c(p,\Omega)$.
\end{lemma}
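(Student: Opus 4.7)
The plan is to derive the estimate by a compactness--contradiction argument that uses only the local estimates already at our disposal together with the classical uniqueness of the Stokes problem.

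Without loss of generality we assume $\int_\Omega \pi\, dx = 0$, since strong Stokes solutions are unique up to an additive constant in $\pi$ and the displayed estimate clearly requires a canonical representative of the pressure. Suppose the assertion fails. Then there exists a sequence $(\bv_k,\pi_k)\in (W^{2,p(\cdot)}(\Omega))^n\times W^{1,p(\cdot)}(\Omega)$ of strong solutions of \eqref{eq:stokes} with data $(\ff_k,g_k)$, mean-zero pressures $\int\pi_k\,dx=0$ and homogeneous boundary values, such that
\begin{equation*}
  \normpo{\bv_k}+\normnegpo{\pi_k}=1,
  \qquad
  \normnegpo{\ff_k}+\normpo{g_k}\to 0.
\end{equation*}
Combining the local Theorems \ref{sttrafogr} and \ref{sttrafohr} via the partition-of-unity argument leading to \eqref{gl} gives the global weak a-priori bound
\begin{equation*}
  \normepo{\bv_k}+\normpo{\pi_k}
  \le c\bigl(\normnegpo{\ff_k}+\normpo{g_k}+\normpo{\bv_k}+\normnegpo{\pi_k}\bigr)\le C,
\end{equation*}
so $\{\bv_k\}$ is bounded in $W_0^{1,p(\cdot)}(\Omega)$ and $\{\pi_k\}$ is bounded in $L^{p(\cdot)}(\Omega)$.

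The Rellich-type embedding $W_0^{1,p(\cdot)}(\Omega)\compactembedding L^{p(\cdot)}(\Omega)$ of Theorem~\ref{einbettungen}(2), applied with $p'(\cdot)$ in place of $p(\cdot)$ and dualised by Schauder's adjoint theorem, also yields the compact embedding $L^{p(\cdot)}(\Omega)\compactembedding W^{-1,p(\cdot)}(\Omega)$. Passing to a subsequence we therefore obtain $\bv_k\to\bv$ strongly in $L^{p(\cdot)}(\Omega)$ and weakly in $W_0^{1,p(\cdot)}(\Omega)$, and $\pi_k\to\pi$ strongly in $W^{-1,p(\cdot)}(\Omega)$ and weakly in $L^{p(\cdot)}(\Omega)$. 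Testing the Stokes equations against functions in $\tfo$ and passing to the limit, $(\bv,\pi)\in W_0^{1,p(\cdot)}(\Omega)\times L^{p(\cdot)}(\Omega)$ is a weak solution of \eqref{eq:stokes} with vanishing data and $\int\pi\,dx=0$. Since $\wepo\embedding W^{1,p^-}(\Omega)$ and $\lpo\embedding L^{p^-}(\Omega)$, classical uniqueness (\cite[Theorem IV.6.1]{Gal94}) forces $\bv=0$ and $\pi=0$. Hence $\normpo{\bv_k}+\normnegpo{\pi_k}\to 0$, contradicting the normalisation.

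The main technical obstacle is the compact embedding $L^{p(\cdot)}\compactembedding W^{-1,p(\cdot)}$ in the variable-exponent setting; the reduction to Theorem~\ref{einbettungen}(2) via Schauder's theorem keeps this step routine. A direct duality argument, solving an adjoint Stokes problem for the exponent $p'(\cdot)$, would be conceptually more transparent but circular at this stage, since the required solvability is precisely what the present lemma is being used to establish.
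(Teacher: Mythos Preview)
Your proof is correct and follows essentially the same compactness--contradiction argument as the paper's own proof: normalise, use the global a~priori estimate \eqref{gl} to get boundedness in $W_0^{1,p(\cdot)}\times L^{p(\cdot)}$, pass to weak/strong limits via the compact embeddings $W_0^{1,p(\cdot)}\compactembedding L^{p(\cdot)}$ and $L^{p(\cdot)}\compactembedding W^{-1,p(\cdot)}$, identify the limit as a weak solution with zero data, and invoke classical uniqueness. Your explicit normalisation $\int_\Omega\pi\,dx=0$ and your justification of the second compact embedding via Schauder's theorem are clarifications that the paper leaves implicit.
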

\begin{proof} Let us assume that the estimate is wrong. This means that we have a sequence of solutions $(\bv_k,\pi_k)$ of the system with data $\ff_k$ and $g_k$ which satisfy $\normpo{\bv_k} + \normnegpo{\pi_k} = 1$ and $\ff_k \rightarrow 0$ in $W^{-1,\pp}(\Omega)$, $g_k \rightarrow 0$ in $\lpo$.
Since estimate \eqref{gl} holds for $(\bv_k,\pi_k)$ with $\ff,g$ replaced by $\ff_k,g_k$, we conclude, that $(\bv_k,\pi_k)$ is bounded in $W_0^{1,\pp}(\Omega)\times\lpo$. Hence we find subsequences (again denoted by index $k$) satisfying
\beq\bal
\bv_k \weakto \bv \mbox{ in } W_0^{1,\pp}(\Omega),\ \bv_k \rightarrow \bv \mbox{ in } \lpo,\\
\pi_k \weakto \pi \mbox{ in } \lpo,\ \pi_k \rightarrow \pi \mbox{ in } W^{-1,\pp}(\Omega).
\eal\eeq
The strong convergences follow from the compact embeddings $$W_0^{1,\pp}(\Omega)\compactembedding\lpo,$$ and $$\lpo \compactembedding W^{-1,\pp}(\Omega).$$ Thus on one hand we have $\normpo{\bv} + \normnegpo{\pi} = 1$, and on the other hand for all $\bphi\in(\tfo)^n,\ \phi\in\tfo$
\beq\bal
\int_{\Omega} \nabla \bv \cdot \nabla \bphi - \pi \divergenz \bphi\ dx &= \lim_k \int_{\Omega} \nabla \bv_k \cdot \nabla \bphi - \pi_k \divergenz \bphi\ dx 
= \lim_k \int_{\Omega} \ff_k \cdot\bphi\ dx = 0,\\
\int_{\Omega} \divergenz \bv \phi\ dx &= \lim_k \int_{\Omega} \divergenz \bv_k \phi\ dx = \lim_k \int_{\Omega} g_k \phi\ dx = 0.
\eal\eeq
Hence $(\bv,\pi)$ is a weak solution of the system with zero data. From \cite[Theorem IV.6.1]{Gal94} we conclude that $\bv\equiv 0,\ \pi\equiv 0$; a contradiction.
\end{proof}

\section{Poisson problem}
\label{sec:poisson problem}
Let us state in this final section the most important of the analogous results for the \emph{Poisson problem}, cf. \cite{DieHHR10} for a sketch of the proof and \cite{Len08} for full details. Let $\Omega$ be a bounded domain in $\setRR$, $n\ge 2$, with $C^{1,1}-$boundary. Using the techniques\footnote{As we already pointed out it is considerably simpler to obtain the half-space results in the case of the Poisson problem.} we employed in the case of Stokes system one can show that the Poisson problem
\begin{equation}\label{eq:p}
\begin{aligned}
-\Delta u &=f \mbox{ in } \Omega,\\
u &= u_0 \mbox{ on } \partial\Omega,
\end{aligned}
\end{equation}
possesses unique strong and weak solutions, respectively, provided that the data have the appropiate regularity. More precisely, one can prove:
\begin{theorem}\label{thm:p-strong}
  Let $p \in \PPln(\Omega)$ satisfy $1< p^- \le p^+ <\infty$. For arbitrary data $f \in L^{p(\cdot)}(\Omega)$ and $u_0 \in \trace (W^{2,p(\cdot
    )}(\Omega))$ there exists a unique strong solution $u \in \wzpo$ of the Poisson equation \eqref{eq:p} which satisfies the estimate
  \beq\bal
    \normzpo{u} \le c \left(\normpo{f} + \norm{u_0}_{\trace (W^{2,p(\cdot)}(\Omega))} \right), 
  \eal\eeq
  where the constant $c$ depends only on the domain $\Omega$ and the
  exponent $p$.
\end{theorem}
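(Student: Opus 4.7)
The plan is to mirror the structure of the Stokes proof but exploit the reflection symmetry of the fundamental solution of $-\Delta$ to bypass the Agmon--Douglis--Nirenberg machinery. As usual, I would first reduce to homogeneous boundary data: given a realization $\tilde u_0 \in W^{2,p(\cdot)}(\Omega)$ of $u_0$ with $\|\tilde u_0\|_{W^{2,p(\cdot)}(\Omega)} \le 2 \|u_0\|_{\trace(W^{2,p(\cdot)}(\Omega))}$, I solve $-\Delta u = f + \Delta \tilde u_0$ with zero trace and set the final solution as $u + \tilde u_0$. Thus it suffices to produce the solution operator $f \mapsto u$ in the case $u_0 = 0$.

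For the whole-space estimate I start with $f \in C^\infty_{0,0}(\setRR)$ and define $u$ by convolution with the Newtonian potential $E(x) = c_n |x|^{2-n}$ (with the appropriate constant). Integration by parts yields a Calderón--Zygmund representation of $\nabla^2 u$ with kernels $\partial_i\partial_j E$; these are symmetric kernels of the form \eqref{kernform} with $P$ a spherical harmonic of degree $2$ satisfying \eqref{intnull}, so Theorem~\ref{hsabschgr} gives $\|\nabla^2 u\|_{L^{p(\cdot)}(\setRR)} \le c \|f\|_{L^{p(\cdot)}(\setRR)}$. A duality argument identical to the one in the proof of Lemma~\ref{stabl} (testing against $\phi \in C^\infty_0(B)$ and using that the first derivative kernel is again Calderón--Zygmund on the test function) furnishes the complementary estimate $\|\nabla u\|_{L^{p(\cdot)}(B)} \le c \|f\|_{D^{-1,p(\cdot)}(\setRR)}$. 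For the half-space problem on $\setRR_>$ with zero Dirichlet data I would extend $f$ by odd reflection through $\Sigma$; the whole-space solution then automatically vanishes on $\Sigma$ because of the odd symmetry of the extended data and the evenness of $E$. This is the simplification advertised in the introduction: no ADN kernel $H$ needs to be inverted, and the half-space estimates follow directly from the whole-space estimates, yielding the analogue of Corollary~\ref{stabschhr}.

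Next comes localization, which I would organize exactly as in Theorems~\ref{sttrafogr} and~\ref{sttrafohr}. For interior regularity, a cutoff $\tau$ turns $u$ into a compactly supported function on $\setRR$ satisfying $-\Delta(u\tau) = f\tau - 2\nabla u\cdot\nabla\tau - u\Delta\tau$, whose right-hand side lies in $L^{p(\cdot)}(\setRR)$, and one applies the whole-space estimate. For boundary regularity at $\bar x \in \partial\Omega$ I use a $C^{1,1}$ chart $\bF$ to straighten the boundary; the transformed Laplacian becomes $\hat\Delta = \Delta + A_i \partial_{in}^2 + B_i \partial_i + C$, where the coefficients $A_i$ are proportional to first derivatives of the parametrization $a$ (with $a(0) = \nabla a(0) = 0$), hence can be made arbitrarily small by shrinking $\alpha$ so that the induced error terms can be absorbed on the left. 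After cutoff and straightening, the half-space estimate yields the local $W^{2,p(\cdot)}$ bound in terms of $\|f\|_{L^{p(\cdot)}(V)} + \|u\|_{W^{1,p(\cdot)}(V)}$. A finite partition of unity covering $\partial\Omega$ together with an interior estimate then gives the global bound
\begin{equation*}
\|u\|_{W^{2,p(\cdot)}(\Omega)} \le c\bigl(\|f\|_{L^{p(\cdot)}(\Omega)} + \|u\|_{W^{1,p(\cdot)}(\Omega)}\bigr).
\end{equation*}

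Finally, I remove the lower-order term $\|u\|_{W^{1,p(\cdot)}(\Omega)}$ by a compactness/contradiction argument analogous to the lemma closing the Stokes proof: assuming the clean estimate fails, one produces a sequence $(u_k)$ with $\|u_k\|_{W^{1,p(\cdot)}} = 1$, $f_k \to 0$ in $L^{p(\cdot)}$, extracts via the compact embedding $W^{1,p(\cdot)}_0(\Omega) \compactembedding L^{p(\cdot)}(\Omega)$ (Theorem~\ref{einbettungen}) a strong $L^{p(\cdot)}$ limit $u$, recognizes $u$ as a weak solution of $-\Delta u = 0$ with zero boundary values by classical uniqueness (available since $W^{1,p(\cdot)} \hookrightarrow W^{1,p^-}$), and obtains the contradiction $u \equiv 0$. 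Existence then follows by density: approximate $f \in L^{p(\cdot)}(\Omega)$ by $C^\infty$ data, use classical $W^{2,p^-}$ solvability to produce approximating solutions, and pass to the limit using the clean a~priori bound, which also yields uniqueness. The main obstacle I anticipate is the careful bookkeeping in the straightening step to verify that the perturbation coefficients $A_i$ are indeed small enough to be absorbed, but this is essentially a computation that tracks how the chain rule interacts with $W^{2,\infty}$ regularity of the chart; no genuinely new analytic difficulty appears beyond what is already controlled by the Calderón--Zygmund theorem.
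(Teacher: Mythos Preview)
Your proposal is correct and follows essentially the same approach the paper outlines: the paper explicitly states the Poisson results without proof, referring to the Stokes argument as template while emphasizing the odd-reflection shortcut in the half-space (and even records the whole-space ingredients you use in Lemma~\ref{pabl} and Theorem~\ref{cor:pabl}). Your reduction to $u_0=0$, localization via Theorems~\ref{sttrafogr}--\ref{sttrafohr}-style charts, compactness removal of the lower-order term, and density closure are exactly the intended route.
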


\begin{theorem}
  Let $p \in \PPln(\Omega)$ satisfy $1< p^- \le p^+ <\infty$. For arbitrary data $f \in W^{-1,\pp}(\Omega)$ and $u_0\in \trace(\wepo)$ there exists a unique weak solution $u \in \wepo$ of the Poisson equation \eqref{eq:p} which satisfies the estimate
  \beq\bal
 \normepo{u} \le c'\left( \normnegpo{f} + \norm{u_0}_{\trace(\wepo)}\right),
  \eal\eeq
where the constant $c'$ depends only on the domain $\Omega$ and the
  exponent $p$.
\end{theorem}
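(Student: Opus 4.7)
The plan is to mirror the proof of the corresponding Stokes theorem (Theorem \ref{thm:stokes-weak}), noting that the argument becomes considerably simpler because of the absence of the pressure and the divergence constraint, and because of the reflection symmetry of the Laplacian in the half-space. As a first reduction I would produce a realization $\tilde u_0 \in \wepo$ of the prescribed trace with $\normepo{\tilde u_0} \le 2\norm{u_0}_{\trace(\wepo)}$, and look for $w := u - \tilde u_0 \in W_0^{1,\pp}(\Omega)$ satisfying $-\Delta w = f + \Delta \tilde u_0 \in W^{-1,\pp}(\Omega)$ in the weak sense.

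For the whole-space building block I would use the Newtonian potential $u = \Gamma \ast f$. Its second-order derivatives $\partial_i\partial_j \Gamma$ are classical Calder\'on--Zygmund kernels satisfying the hypotheses of Theorem \ref{hsabschgr}, so the duality argument from the proof of Lemma \ref{stabl} yields $\norm{\nabla u}_{L^{\pp}(\setR^n)} \le c\,\norm{f}_{D^{-1,\pp}(\setR^n)}$ for $f \in C^\infty_{0,0}(\setR^n)$, and density (Lemma \ref{pro:C00-denseD-1}) extends the bound to $D^{-1,\pp}(\setR^n)$. For the half-space Dirichlet problem, the key simplification is that extending $f$ by \emph{odd} reflection across $\Sigma$ produces a whole-space solution that vanishes on $\Sigma$ automatically; no analogue of the delicate Theorem \ref{randsatz} is needed. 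The required control $\norm{f}_{D^{-1,\pp}(\setR^n)} \le c\,\norm{f}_{D^{-1,\pp}(\setR^n_>)}$ is obtained by the same even/odd decomposition of test functions used in the proof of Corollary \ref{stabschhr}.

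The remainder is a localization argument of the same type as Theorems \ref{sttrafogr} and \ref{sttrafohr}. Interior estimates come from applying the whole-space bound to $\bar u := u\tau$, which solves $-\Delta \bar u = f\tau - 2\nabla u \cdot \nabla \tau - u\Delta \tau$, with the $D^{-1,\pp}$ norm of the right-hand side controlled via integration by parts and H\"older's inequality against test functions. Near a boundary point, after boundary-straightening via $\bfF$ one picks up extra lower-order terms whose coefficients are scalings of first derivatives of $a$; by choosing $\alpha$ sufficiently small (using $a \in C^{1,1}$ with $\nabla a(0)=0$) these can be absorbed into the left-hand side. A finite cover of $\partial\Omega$ combined with one interior estimate then yields the preliminary global bound $\normepo{u} \le c\left(\normnegpo{f} + \normpo{u}\right)$.

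The step I expect to be most subtle is the removal of the lower-order term $\normpo{u}$ on the right-hand side, which I would handle by a compactness/contradiction argument paralleling the concluding lemma of Section \ref{sec:Main}. If the sought estimate were false, there would exist solutions $u_k$ with $\normpo{u_k}=1$ and right-hand sides $f_k \to 0$ in $W^{-1,\pp}(\Omega)$. By the preliminary bound $(u_k)$ is bounded in $W_0^{1,\pp}(\Omega)$, so the compact embedding $W_0^{1,\pp}(\Omega)\compactembedding\lpo$ from Theorem \ref{einbettungen} produces a subsequence converging strongly in $\lpo$ to a limit $u$ that is a weak solution of the homogeneous Poisson problem. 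Classical $L^{p^-}$-uniqueness, combined with the embedding $\wepo \embedding W^{1,p^-}(\Omega)$, forces $u=0$, contradicting $\normpo{u}=1$. Uniqueness of the final solution follows from the same classical result.
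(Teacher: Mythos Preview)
Your proposal is correct and follows precisely the route the paper itself indicates: the paper does not give an explicit proof of this theorem but states that it is obtained by the same localization strategy used for the Stokes system, with the half-space step simplified by the odd-reflection symmetry of the Laplacian (so that Theorem~\ref{hsabschhr} is not needed). Your outline---reduction to homogeneous boundary data, the Newton-potential estimate of Lemma~\ref{pabl}/Theorem~\ref{cor:pabl}, odd reflection for the half-space, the cut-off/straightening argument of Theorems~\ref{sttrafogr} and~\ref{sttrafohr}, and the compactness lemma to absorb $\normpo{u}$---reproduces exactly this scheme.
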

We call $u$ a \emph{strong solution} of \eqref{eq:p} provided that it satisfies the differential equation in \eqref{eq:p} in the sense of weak derivatives. We call $u$ a \emph{weak solution} of \eqref{eq:p} provided that
\begin{equation*}
\begin{aligned}
\int_\Omega \nabla u \cdot \nabla\phi\ dx &= \langle f,\phi\rangle & &\forall\phi\in W_0^{1,\pps}(\Omega),\\
u&=u_0 & &\mbox{on } \partial\Omega.
\end{aligned}
\end{equation*}

In order to show the above theorems one needs the following results which are also of interest on their own. Solutions of the equation
\begin{equation}\label{eq:pgr}
\begin{aligned}
-\Delta u =f \mbox{ in } \setR^n
\end{aligned}
\end{equation}
are obtained by a convolution of $f$ with the \emph{Newton potential} 
$$ K(x) := \frac{1}{(n-2)|\partial B_1|}\frac{1}{|x|^{n-2}}.$$
It is well known and easy to see that the second derivatives of the Newton potential satisfy the assumptions of Theorem
\ref{hsabschgr}. Consequently we get:

\begin{lemma}\label{pabl}
  Let $p \in \PPln(\setR^n)$ satisfy $1< p^- \le p^+ <\infty$, and let
  $f\in C^\infty_{0,0}(\setRR)$.  Then the convolution $u:=K\ast f$ is infinitely differentiable and solves the problem \eqref{eq:pgr}. Moreover, the first and second order derivatives have the representations $(i,j=1,\ldots,n)$
  \begin{equation*}\bal
    \partial_i u(x)&= \int_{\setR^n} \partial_{x_i}K(x-y) f(y)\ dy,\\
    \partial_{i}\pa_{j}u(x)&= \lim_{\epsilon \searrow 0}
    \int_{(B(x,\epsilon))^c} \partial_{x_i}\pa_{x_j}K(x-y) f(y)\ dy - \frac{1}{n}\delta_{ij}f(x),
\eal\end{equation*}
and satisfy the estimates 
\begin{equation*}\bal
  \norm{\nabla u}_{L^{\pp}(\setRR)} &\le c \,\|f\|_{D^{-1,\pp}(\setR^n)}\,,
  \\ 
  \normpr{\nabla^2 u}&\le c\, \normpr{f}\,,
\eal\end{equation*}
with a constant  $c=c(p)$.
\end{lemma}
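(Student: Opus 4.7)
The proof will closely follow the pattern of Lemma \ref{stabl}, but it is technically simpler because there is only one kernel $K$ (rather than the pair $\bV,\bQ$) and the system collapses to a scalar equation.

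First I would verify smoothness and derive the representation formulas. Since $K\in L^1_\loc(\setRR)$ and $f\in C^\infty_{0,0}(\setRR)$ has compact support, $u=K\ast f$ is well defined and one can differentiate under the integral sign by putting all derivatives on $f$, establishing $u\in C^\infty(\setRR)$. The first-derivative representation follows at once from integration by parts, transferring one derivative from $f$ onto $K$; this is legitimate because $\nabla K(x)=-\frac{1}{|\pa B_1|}\frac{x}{|x|^n}$ is locally integrable. For the second derivative one removes an $\epsilon$-ball around the singularity, integrates by parts a second time, and lets $\epsilon\searrow 0$. The boundary integral over $\pa B_\epsilon(x)$ does not vanish: using that $\pa_i K(x-y)\,\nu_j\sim\frac{1}{|\pa B_1|}\frac{(x-y)_i(x-y)_j}{|x-y|^{n+1}}$ on the sphere and averaging over directions yields $\int_{\pa B_1}\omega_i\omega_j\,d\omega=\frac{1}{n}|\pa B_1|\delta_{ij}$, whence the surface term contributes precisely $-\frac{1}{n}\delta_{ij}f(x)$. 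Summing $i=j$ and using $\Delta K=0$ in $\setRR\setminus\{0\}$ recovers $-\Delta u=f$.

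Next I would derive the second-derivative estimate. The kernels $\pa_i\pa_j K$ have the form $P_{ij}(x/|x|)/|x|^n$ with $P_{ij}$ smooth on $\pa B_1$; the mean-value condition $\int_{\pa B_1} P_{ij}\,d\omega=0$ is a straightforward consequence of the homogeneity and the fact that $\pa_i\pa_j K$ integrates to zero over symmetric annuli (for $i\neq j$ by oddness, for $i=j$ via $\sum_i P_{ii}=0$ away from the origin together with the mean-value formula just used). Hence Theorem~\ref{hsabschgr} applies directly to the principal-value integral, and combining with the bounded jump term $-\frac{1}{n}\delta_{ij}f(x)$ yields
\beq
\normpr{\nabla^2 u}\le c\,\normpr{f}.
\eeq

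The first-derivative estimate I would handle by the same duality trick used in the proof of Lemma~\ref{stabl}. Fixing a ball $B\subset\setRR$ and using Theorem~\ref{ppdual} together with the density of $C_0^\infty(B)$ in $L^{p'(\cdot)}(B)$,
\beq
\bal
\norm{\pa_i u}_{L^{\pp}(B)}
&\le 2\sup_{\phi\in C_0^\infty(B),\,\norm{\phi}_{L^{p'(\cdot)}(B)}\le 1}\int_B \phi\,\pa_i u\,dx\\
&=2\sup \int_{\setRR} f(y)\underbrace{\int_{\setRR}\pa_{x_i}K(x-y)\phi(x)\,dx}_{=:\Phi(y)}\,dy.
\eal
\eeq
Applying the already-established second-derivative bound with $f$ replaced by $\phi$ and the exponent replaced by $p'(\cdot)$ gives $\norm{\nabla\Phi}_{L^{p'(\cdot)}(\setRR)}\le c\,\norm{\phi}_{L^{p'(\cdot)}(\setRR)}$, and then by the definition of the $D^{-1,\pp}$ norm,
\beq
\norm{\pa_i u}_{L^{\pp}(B)}\le c\sup_{\Phi\in D^{1,p'(\cdot)}(\setRR),\,\norm{\nabla\Phi}_{L^{p'(\cdot)}}\le 1}\int_{\setRR} f\,\Phi\,dy=c\,\|f\|_{D^{-1,\pp}(\setRR)},
\eeq
with a constant independent of $B$. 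Taking the supremum over balls finishes the first estimate.

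The principal obstacle, as in Lemma~\ref{stabl}, is to justify the passage to the limit $\epsilon\searrow 0$ in the second-derivative formula and to identify the boundary constant $-\frac{1}{n}\delta_{ij}$ correctly; the Calderón--Zygmund bound and the duality reduction to the second-derivative estimate are then routine applications of the tools collected in Section~\ref{sec:preliminaries}.
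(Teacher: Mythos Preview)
Your proposal is correct and follows precisely the approach the paper intends: the paper does not spell out a proof of Lemma~\ref{pabl} but explicitly presents it as the scalar analogue of Lemma~\ref{stabl}, and your argument reproduces that proof step for step (integration-by-parts representations with the $-\frac{1}{n}\delta_{ij}$ boundary term, the $\nabla^2$-bound via Theorem~\ref{hsabschgr}, and the duality trick for the $\nabla$-bound). Nothing is missing.
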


Letting $L$ denote the continuation of the operator $f\mapsto K\ast f$ in the appropriate spaces we can state the following theorem.
\begin{theorem}\label{cor:pabl} 
  Let $p \in \PPln(\setR^n)$ satisfy $1< p^- \le p^+ <\infty$.
  \begin{enumerate}
  \item If $f\in \lpr$ then $L f \in D^{2,p(\cdot)}(\setRR)$
    satisfies the estimate
    \begin{equation*}
      \label{pmaingrabsch1}
      \norm{Lf}_{D^{2,p(\cdot)}(\setRR)} \le c\, \norm{f}_{L^{p(\cdot)}(\setRR)}
    \end{equation*}
    with a constant $c=c(p)$.
  \item If $f\in D^{-1,p(\cdot)}(\setRR)$ then $L f \in
    D^{1,p(\cdot)}(\setRR)$ satisfies the estimate
    \begin{equation*}
      \label{pmaingrabsch2}
      \norm{L f}_{D^{1,p(\cdot)}(\setRR)} \le c\, \norm{f}_{D^{-1,p(\cdot)}(\setRR)}
    \end{equation*}
    with a constant $c=c(p)$.
  \item If $f\in\lpr$ has bounded support and vanishing mean value, hence $f\in D^{-1,\pp}(\setRR)$, then
    $L f \in (D^{(1,2),p(\cdot)}(\setRR))^n\subset(D^{1,p(\cdot)}(\setRR))^n$ satisfies both of the above estimates.
  \end{enumerate}
\end{theorem}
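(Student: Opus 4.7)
The proof proposal mirrors the structure of the analogous Theorem \ref{cor:stabl} for the Stokes system, and naturally splits into two phases corresponding to parts (1)--(2) and part (3) respectively.

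\textbf{Phase 1 (parts (1) and (2)).} These assertions will follow by a routine density-extension argument. Lemma \ref{pabl} establishes the two required a~priori estimates for $f \mapsto K \ast f$ on the dense test-function space $C_{0,0}^\infty(\setR^n)$, namely
$\|K\ast f\|_{D^{2,p(\cdot)}} \le c\,\|f\|_{p(\cdot)}$ and
$\|K\ast f\|_{D^{1,p(\cdot)}} \le c\,\|f\|_{D^{-1,p(\cdot)}}$.
Invoking Lemma \ref{pro:C00-denseD-1}, which asserts that $C_{0,0}^\infty(\setR^n)$ is dense both in $L^{p(\cdot)}(\setR^n)$ and in $D^{-1,p(\cdot)}(\setR^n)$, the operator extends uniquely by continuity to bounded linear maps
$L\colon L^{p(\cdot)}(\setR^n)\to D^{2,p(\cdot)}(\setR^n)$ and
$L\colon D^{-1,p(\cdot)}(\setR^n)\to D^{1,p(\cdot)}(\setR^n)$
with the stated norm bounds. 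The only nuisance is that the two extensions produce, a~priori, equivalence classes modulo different polynomial spaces ($\mathsf P_0$ versus $\mathsf P_1$), but on a common $C_{0,0}^\infty$ input they agree, so this poses no issue for the separate statements.

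\textbf{Phase 2 (part (3)).} Here $f\in L^{p(\cdot)}(\setR^n)$ has bounded support, say in a bounded Lipschitz domain $A$, and vanishing mean value, so by Lemma \ref{cor:C00-D-1} the inclusion $L^{p(\cdot)}_0(A)\embedding D^{-1,p(\cdot)}(\setR^n)$ gives $f\in D^{-1,p(\cdot)}(\setR^n)$ with
\begin{equation*}
\|f\|_{D^{-1,p(\cdot)}(\setR^n)} \le c\,\|f\|_{L^{p(\cdot)}(A)}.
\end{equation*}
The plan is to pick a single approximating sequence $(f_k)\subset C_{0,0}^\infty(\setR^n)$ supported in a fixed bounded enlargement of $A$ (obtained by a standard mollification-and-truncation, preserving vanishing mean value) with $f_k\to f$ in $L^{p(\cdot)}(\setR^n)$. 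Applying the inclusion estimate above to $f_k-f_\ell$ shows that the same sequence is Cauchy in $D^{-1,p(\cdot)}(\setR^n)$ as well. The bounds of parts (1) and (2) then make $(Lf_k)$ simultaneously Cauchy in $D^{2,p(\cdot)}(\setR^n)$ and in $D^{1,p(\cdot)}(\setR^n)$, and the two limits must be compatible representatives of a single element in $D^{(1,2),p(\cdot)}(\setR^n)$.

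\textbf{Main obstacle.} The subtle point is matching the different equivalence classes: parts (1) and (2) deliver $Lf$ only modulo $\mathsf P_1$ and $\mathsf P_0$ respectively, whereas $D^{(1,2),p(\cdot)}$ quotients only by constants. I expect to resolve this by working with genuine representatives of $Lf_k$ (for instance the pointwise convolution, which is $C^\infty$ on $C_{0,0}^\infty$ inputs), observing that both $\nabla Lf_k$ and $\nabla^2 Lf_k$ are genuinely Cauchy in $L^{p(\cdot)}$, and then identifying the limit $Lf$ with the element of $\widetilde D^{1,p(\cdot)}\cap\widetilde D^{2,p(\cdot)}$ whose class modulo constants lies in $D^{(1,2),p(\cdot)}(\setR^n)$. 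Once this is pinned down, passing to the limit in the estimates for $f_k$ and invoking the bound of Lemma \ref{cor:C00-D-1} for the $D^{-1,p(\cdot)}$-norm yields both inequalities simultaneously.
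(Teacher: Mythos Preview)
Your proposal is correct and follows essentially the same approach as the paper. The paper does not prove Theorem~\ref{cor:pabl} explicitly (Section~\ref{sec:poisson problem} states results without proofs), but the implied argument is the one given for the analogous Theorem~\ref{cor:stabl}: parts (1) and (2) via Lemma~\ref{pabl} combined with the density statement of Lemma~\ref{pro:C00-denseD-1}, and part (3) by choosing a single $C_{0,0}^\infty$ approximating sequence, using the estimate~\eqref{homest} from Lemma~\ref{cor:C00-D-1} to transfer $L^{p(\cdot)}$-convergence to $D^{-1,p(\cdot)}$-convergence, and concluding that $(Lf_k)$ is Cauchy in $D^{(1,2),p(\cdot)}$. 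Your additional care with the equivalence-class bookkeeping is warranted but not strictly necessary: since each $K\ast f_k$ is a genuine smooth function and $D^{(1,2),p(\cdot)}$ quotients only by constants, the Cauchy property in that space follows directly from $\nabla(K\ast f_k)$ and $\nabla^2(K\ast f_k)$ being Cauchy in $L^{p(\cdot)}$.
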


\bibliographystyle{amsalpha}

\bibliography{./lars}

\end{document}